\newtheorem{The}{Theorem}[section]
\newtheorem{Theorem}{Theorem}[section]
\newtheorem{Proposition}[The]{Proposition}
\newtheorem{Lemma}[The]{Lemma}
\theoremstyle{definition}
\newtheorem{Definition}[The]{Definition}
\newtheorem{Remark}[The]{Remark}
\newtheorem{Example}[The]{Example}
\newcommand{\R}{\mathbb{R}}      
\newcommand{\C}{\mathbb{C}}
\def \ord{{\rm ord}}
\def \rank{{\rm rank}}
\def \ld{{\rm ld}}
\def \lc{{\rm lc}}
\def \lcd{{\rm lcd}}
\def \lcm{{\rm lcm}}
\def \I{{\rm in}}
\subjclass[2000]{68U05, 32M05, 32V40, 12H05}
\begin{document}

\title{
Applications of differential algebra
\\
for computing Lie algebras of
\\
infinitesimal CR-automorphisms}
\author{Masoud Sabzevari}
\address{Department of Mathematics,
Shahrekord University, 115 Shahrekord, IRAN}
\email{sabzevari@math.iut.ac.ir}

\author{Amir Hashemi}
\address{Department of Mathematical Sciences,
Isfahan University of Technology, Isfahan, IRAN}
\email{a.hashemi@cc.iut.ac.ir}

\author{Benyamin M.-Alizadeh}
\address{Department of Mathematical Sciences,
Isfahan University of Technology, Isfahan, IRAN \ \ and \ \ Young
Researchers Club, Science and Research Branch, Islamic Azad
University, Tehran, 461/15655, Iran.}
\email{benyamin.m.alizadeh@gmail.com}

\author{Jo\"{e}l Merker}
\address{D\'epartment de Math\'ematiques d'Orsay, B\^{a}timent 425,
Facult\'{e} des Sciences, Universit\'{e} Paris XI - Orsay, F-91405
Orsay Cedex,  FRANCE} \email{merker@dma.ens.fr}

\date{\number\year-\number\month-\number\day}

\maketitle

\begin{abstract}
We perform {\em detailed} computations of Lie algebras of
infinitesimal CR-automorphisms associated to three specific model
real analytic CR-generic submanifolds in $\C^9$ by employing
differential algebra computer tools\,\,---\,\,mostly within the {\sc
Maple} package {\tt DifferentialAlgebra}\,\,---\,\,in order to
automate the handling of the arising highly complex linear systems of
{\sc pde}'s. Before treating these new examples which prolong
previous works of Beloshapka, of Shananina and of Mamai, we provide
general formulas for the explicitation of the concerned {\sc pde}
systems that are valid in arbitrary codimension $k \geqslant 1$ and
in any CR dimension $n \geqslant 1$.  Also, we show how Ritt's
reduction algorithm can be adapted to the case under interest, where
the concerned {\sc pde} systems admit so-called {\sl complex
conjugations}.
\end{abstract}

\pagestyle{headings} \markright{Applications of Differential Algebra
to the computation of Lie algebras of infinitesimal CR-automorphisms}

\section{Introduction}
\label{Introduction}

The Lie algebras $\frak{aut}_{CR}\big( M_{\sf model} \big)$ of
infinitesimal CR-automorphisms of various {\em model}\,\,---\,\,in the
sense of Beloshapka\,\,---\,\,Cauchy-Riemann (CR) generic submanifolds
$M_{\sf model} \subset \C^{ n + k}$ of codimension $k \geqslant 1$ and
of CR dimension $n \geqslant 1$ are key algebraic features which open
the door to a potentially infinite number of new Cartan geometries.
Indeed, the knowledge of $\frak{ aut}_{ CR} \big( M_{\sf
model} \big)$ and of its isotropy subalgebras $\mathfrak{ aut}_{ CR}
\big( M_{\sf model}, p \big)$ at points $p \in M_{\sf model}$ strongly
intervenes when one endeavours to build Cartan connections associated
to {\em all} {\sl geometry-preserving} real analytic deformations $M
\subset \C^{ n+k}$ of a chosen model. It is well known that procedures
due to Cartan and to Tanaka exist to perform such constructions
({\em see}~\cite{Chern-Moser,
BES, EMS, AMS, Isaev, MS, 5-cubic}
in a CR context), although the practical outcome
appears most of the times to be delicate and unpredictable.

In addition, there has recently been an increasing interest towards
complete classification of CR-generic submanifolds according to their
algebras of infinitesimal CR-automorphisms. Notably, Beloshapka and
Kossovskiy \cite{Beloshapka2011} classified homogeneous CR-generic
submanifolds $M^4 \subset \C^3$ of CR dimension $1$, while a bit
before, Fels and Kaup \cite{Fels} classified the Levi-degenerate
homogeneous $2$-nondegenerate hypersurfaces $M^5 \subset \C^3$ of
dimension five.

Far beyond for what concerns the appearing (co)dimensions, Beloshapka
and his school in the last decade devised general procedures to cook
up {\sl model CR-generic submanifolds} that, most often, have rigid
polynomial defining equations. But when some concrete equation of a
CR-generic manifold is given, one unpleasant obstacle happens to be
the {\em complexity} and the {\em length} of the computations that are
required to attain the full Lie algebras of infinitesimal CR-automorphisms
({\em see}~\cite{AMS, Beloshapka1997, 5-cubic, MS, Shananina2000}),
an obstacle which motivates the present work.

By $n \geqslant 1$ and $k
\geqslant 1$ throughout this paper, we shall mean the CR dimension and
the codimension of a real analytic local CR-generic
submanifold $M^{ 2n + k}
\subset\mathbb C^{n+k}$ passing through a reference point,
say through the origin.
Beloshapka in \cite{Beloshapka2004}, introduced a significant class
of CR-generic manifolds with several {\em nice} properties ({\em see}
{\em e.g.} Theorem 14 in
\cite{Beloshapka2004}),
that he denoted by
$Q(n,k)\subset\mathbb C^{n+k}$ and called {\sl universal
models}. He also computed the algebras of infinitesimal
CR-automorphisms associated to the simplest model $Q(1,2)$ in
\cite{Beloshapka1997}, and he derived some interesting
stability results.
Subsequently, Shananina computed such algebras for the universal
models $Q(1,k)$ with $3\leqslant k \leqslant 7$ in \cite{Shananina2000} and
derived expected consequences
too ({\em see} Theorem 1, Propositions 1 and 2 and Corollary 1 of
\cite{Shananina2000}). Finally, in \cite{Mamai}, Mamai studied Lie
algebras of infinitesimal CR-automorphisms associated to some universal
models $Q(1,k)$ for $8\leqslant k\leqslant 12$, though without presenting
details. As far as the authors are aware of, in CR dimension $n = 1$,
no higher codimensions have been explored. Understandably, as much as
the dimension or codimension of a CR-generic submanifold $M^{ 2n + k}
\subset \C^{ n+k}$ increases, the size and the complexity of the
corresponding computations of $\frak{aut}_{CR}(M)$ growths rapidly,
hence an automation would be welcome, even a partial one.

Serendipitously, an important, recently renewed, much
related subject has been
extensively studied: {\sl Differential (Computer) Algebra}, {\em
cf.}  \cite{Boulier,Blop95,Gallo, Kolchin,Ritt}. There, one employs
algebraic tools\,\,---\,\,like the ones of Gr\"obner bases
theory\,\,---\,\,in order to solve systems of partial differential
equations, or in order to find the Lie symmetries of differential
equations, a vast area too. Over the past few years, several relative
packages have been developed within various computer
algebra
systems. For instance, two {\sc Maple} packages
were designed in this direction, firstly {\tt
DifferentialAlgebra} by Boulier, Lazard, Ollivier, Petitot
(\cite{Boulier}), and secondly {\tt diffgrob2} by Mansfield
(\cite{Mansfield1997}).

In this paper, we aim to provide an effective algorithm in order to compute
the Lie algebras $\frak{aut}_{CR}(M_{\sf model})$ associated to
model real analytic
CR-generic submanifolds $M_{\sf model}^{ 2n+k}
\subset \C^{ n+k}$, by the valuable means of differential
algebra, supplemented by some new operations. For this
purpose, we shall denote by ${\sf LinCons}$ the {\sc pde} systems that
have
constant (complex) coefficients,
precisely as the ones we shall encounter
several times.
Since these systems admit complex-valued equations, we equip at first
the fundamental {\em Ritt's reduction theorem} with an operator,
which we call the {\sl bar-reduction}, and we extend it to
gain the following
conclusion, more appropriate to treat the arising
complex-valued ${\sf LinCons}$ {\sc pde}
systems ({\em see} Theorem \ref{ModRitt-Thm}):

\begin{Theorem}
{\sc (Extended Ritt's reduction theorem)} Consider a differential
polynomial ring $R=\mathbb{C}\{u_1,\ldots,u_n\}$ over the field of
complex numbers, let $\Theta$ be the set of its derivation operators,
and let `$>$' be a ranking over $\Theta U$. Furthermore, assume that
$p\in R$ is a ${\sf LinCons}$ differential polynomial and let $Q$ be a
finite set of ${\sf LinCons}$ differential polynomials. Then, there
exists $r \in R$, and for each $q\in Q$, there exists $\theta_q,
\theta_{\overline{ q}} \in
\Theta$ and $c_q, c_{\overline{ q}}\in K$
satisfying the following conditions:

\begin{itemize}

\smallskip\item
$p= \big( \sum_{q\in Q}\,c_q\,\theta_q\, q \big) +
\big( \sum_{q\in
Q}\,c_{\overline{q}}\,\theta_{\overline{q}} \,\overline{q} \big) + r,$

\smallskip\item
for each $q$ appearing in this summation we have:
\[\rank(r)
<
{\rm min}_>
\big\{\rank(\theta_qq),
\rank(\theta_{\overline{q}}{\overline{q}})
\big\},
\]

\smallskip\item{no term of $r$ is a derivation of either $\rank(q)$ or
$\rank(\overline{q})$ for each $q \in Q$.}
\end{itemize}
\end{Theorem}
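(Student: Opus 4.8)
The plan is to \emph{reduce the statement to the classical Ritt reduction theorem applied to an enlarged set of reductors}. The first observation is that the bar operator preserves the class ${\sf LinCons}$: if $q \in R$ is a finite $\mathbb{C}$-linear combination of elements of $\Theta U$ with constant coefficients, then its conjugate $\overline{q}$, obtained by applying the conjugation involution simultaneously to the coefficients, to the indeterminates of $U$ and to the basic derivations, is again such a combination, i.e.\ $\overline{q}$ is again a ${\sf LinCons}$ differential polynomial; moreover, since this involution is a (conjugate-linear) automorphism of the differential ring $R$, one has $\overline{\theta\,q} = \theta'\,\overline{q}$ with $\theta' \in \Theta$, so that for any finite set $Q$ of ${\sf LinCons}$ polynomials, $\widetilde{Q} := Q \cup \overline{Q}$, where $\overline{Q} := \{\overline{q}\colon q \in Q\}$, is again a finite set of ${\sf LinCons}$ polynomials. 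This is precisely what allows the bar operator to be built into the reduction process: a \emph{bar-reduction step} is nothing but an ordinary reduction step in which one is permitted to cancel a term using a derivative $\theta\,\overline{q}$ of a conjugate reductor as well as a derivative $\theta\,q$; equivalently, bar-reduction of $p$ modulo $Q$ is ordinary reduction of $p$ modulo $\widetilde{Q}$.

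I would then invoke the classical Ritt reduction theorem for the ${\sf LinCons}$ polynomial $p$ with respect to $\widetilde{Q}$ and the given ranking $>$. Because $p$ and all members of $\widetilde{Q}$ are linear, every initial and separant is a nonzero constant, so no multiplier of the form $H\cdot p$ intervenes, and the reduction proceeds through a finite chain of elementary steps $p_{j+1} = p_j - c_j\,\theta_j\,g_j$ with $c_j \in \mathbb{C}$, $\theta_j \in \Theta$, $g_j \in \widetilde{Q}$; the chain terminates because the ranking is a well-order and each step strictly lowers the highest reducible derivative occurring in the current polynomial. The terminal polynomial $r := p_N$ is by construction reduced with respect to $\widetilde{Q}$, which is exactly the third bullet: no derivative occurring in $r$ is a derivative of $\rank(q)$ or of $\rank(\overline{q})$ for any $q \in Q$. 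Summing the chain, $p = \big(\sum_j c_j\,\theta_j\,g_j\big) + r$, and then collecting the summands according to whether $g_j$ lies in $Q$ or has the form $\overline{q}$, and gathering for each reductor all the derivation operators that hit it, one gets the displayed decomposition $p = \big(\sum_{q\in Q} c_q\,\theta_q\,q\big) + \big(\sum_{q\in Q} c_{\overline{q}}\,\theta_{\overline{q}}\,\overline{q}\big) + r$ (each symbol $c_q\,\theta_q$ standing, in general, for the finite $\mathbb{C}$-combination of scalar-times-derivative contributions of all steps that used $q$).

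For the second bullet it suffices to carry out the elementary steps in order of decreasing rank of the cancelled derivative. Each step then removes the current leader of $p_j$, and, by linearity, $\theta_j\,g_j$ has leader $\theta_j(\rank g_j) = \rank(\theta_j\,g_j)$ with all of its other terms of strictly smaller rank; hence no derivative of rank $\geqslant \rank(\theta_j\,g_j)$ is reintroduced at any later step, and therefore $\rank(r) < \rank(\theta_j\,g_j)$ for every reductor-derivative $\theta_j\,g_j$ that was actually used. Taking the minimum over the (finitely many) reductors appearing gives the inequality as stated.

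The only genuinely delicate point is the first one: one must verify that the conjugation is a differential-ring involution compatible with $\Theta$ and with the ambient ring $R$, so that $\overline{Q}$ truly consists of ${\sf LinCons}$ reductors and $\overline{\theta\,q}$ is again a single derivative of $\overline{q}$. Once this is secured, the ``extension'' amounts to nothing more than the classical Ritt reduction run against the doubled finite family $\widetilde{Q}$: in particular termination, reducedness of $r$, and the rank estimates are all inherited from the classical theorem, and adjoining conjugate reductors cannot create an infinite descent, since $\widetilde{Q}$ is merely one more finite family of ${\sf LinCons}$ polynomials. The rest --- regrouping the linear chain into the per-$q$ and per-$\overline{q}$ form and tracking the rank inequalities --- is routine bookkeeping once the steps are organised in decreasing-rank order.
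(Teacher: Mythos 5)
Your proposal is correct and follows essentially the same route as the paper: the paper proves the statement by running the Ritt reduction algorithm with an extra branch that permits cancellation against derivatives of the conjugates $\overline{q}$, which is exactly your reformulation of the extended reduction as classical Ritt reduction against the doubled finite family $\widetilde{Q}=Q\cup\overline{Q}$, relying on the same observation that conjugation is a differential involution (so that $\overline{Q}$ is again ${\sf LinCons}$ and $\overline{\theta q}$ is a single derivative of $\overline{q}$). Your packaging as a direct corollary of the classical theorem, rather than as a re-run of the loop-invariant argument, is a presentational difference only.
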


Using this result, we modify the {\em Rosenfeld-Gr\"obner} algorithm
to an algorithm we call the {\sl LRG} algorithm (Algorithm \ref{LRG}
below) in such a way that it works appropriately in the case of our
${\sf LinCons}$ complex-valued {\sc pde} systems. It is noteworthy
that, for the sake of simplicity, we study only {\em rigid} CR-generic
submanifolds $M^{ 10} \subset \mathbb{C}^{1+8}$ ({\em see} Subsection
\ref{mainstrategy} for more details), because the arising {\sc pde}
systems {\em are} ${\sf LinCons}$; of course, a more general algorithm
could be described for non-rigid $M_{\sf model}^{ 2n+k} \subset \C^{
n+k}$, the {\sc pde} system still being linear, though with
non-necessarily constant coefficients.

Specifically, we employ our {\sl LRG} algorithm for computing {\em
explicitly} the Lie algebras of infinitesimal CR-automorphisms
associated to the following three real analytic rigid CR-generic
submanifolds $M^{ 10} \subset \mathbb{ C}^9$, represented in
coordinates $(z,w) = (z,w^1,\ldots,w^8)$ as the graphs of the
following {\em shared} six defining equations:
\begin{equation}
\label{models} \aligned w^1-\overline{w}^1&=2\,i\,z\overline{z}, \ \
\ \ \ \ \ \ \ \ \ \ \ \ \ \ \ \ \ \ \ \ \
w^2-\overline{w}^2=2\,i\,(z^2\overline{z}+z\overline{z}^2),
\\
w^3-\overline{w}^3&=2\,(z^2\overline{z}-z\overline{z}^2), \ \ \ \ \ \
 \ \ \
w^4-\overline{w}^4=2\,i\,(z^3\overline{z}+z\overline{z}^3),
\\
w^5-\overline{w}^5&=2\,(z^3\overline{z}-z\overline{z}^3), \ \ \ \ \ \
\ \ \  w^6-\overline{w}^6=2\,i\,z^2\overline{z}^2,
\endaligned
\end{equation}
together with:
\[ \textrm {First model} \  \mathbb M^1: \ \ \
\left\{ \aligned
w^7-\overline{w}^7&=2\,i\,(z^4\overline{z}+z\overline{z}^4),
\\
w^8-\overline{w}^8&=2\,(z^4\overline{z}-z\overline{z}^4), \ \ \ \ \ \
\  {\rm\bf }
\endaligned\right.
\]
\[ \textrm {Second model} \  \mathbb M^2: \
\left\{ \aligned
w^7-\overline{w}^7&=2\,i\,(z^3\overline{z}^2+z^2\overline{z}^3),
\\
w^8-\overline{w}^8&=2\,(z^3\overline{z}^2-z^2\overline{z}^3), \ \ \ \
\ \ \  {\rm\bf }
\endaligned\right.
\]
\[\textrm {Third model} \  \mathbb M^3:
\left\{ \aligned
w^7-\overline{w}^7&=2\,i\,(z^3\overline{z}^2+z^2\overline{z}^3),
\\
w^8-\overline{w}^8&=2\,i\,(z^4\overline{z}+z\overline{z}^4). \ \ \ \
\ \ \ \ \ \
\endaligned\right.
\]
For this, we use the {\sc Maple} package {\tt DifferentialAlgebra} to
carry out the necessary computations. Nevertheless, these
computations involve complex integers and functions and hence it is
not possible to perform directly this package in this respect. That
is why, we need also to utilize a new reduction called {\sl bar-reduction}
that uses complex conjugation to obtain a full remainder ({\em see}
Section \ref{Solving PDE}). Executing long computations, we achieve
the following results. Of course, the main interest of
our algorithmic partly automatize approach is
to open the door to a wealth of other examples
taking inspiration from Beloshapka's universal models.

\begin{Theorem}
The Lie algebras of infinitesimal CR-automorphisms
$\frak{aut}_{CR}(\mathbb M^1)$, $\frak{aut}_{CR}(\mathbb M^2)$ and
$\frak{aut}_{CR}(\mathbb M^1)$ of the three real analytic generic
CR-generic submanifolds $\mathbb M^1, \mathbb M^2$ and $\mathbb M^3$ of
$\mathbb C^{1+8}$ are of dimensions $12,12$ and $11$, respectively
and are generated by the $\mathbb R$-linearly independent real parts
of the following collections of holomorphic vector fields:
\def\theequation{$\mathbb{M}^1$}
\begin{equation}
\footnotesize
\left\{ \aligned
 {\sf X}_i&:=\partial_{w_i}, \ \ \ \ \ {
i=1,\ldots,8},
\\
 {\sf X}_9&:=z\partial_{z}+2\,w^1\partial_{w^1}+3\,w^2\partial_{w^2}+3
\,w^3\partial_{w^3}+4\,w^4\partial_{w^4}+
 \\
&+
 4\,w^5\partial_{w^5}+4\,w^6\partial_{w^6}+5\,w^7\partial_{w^7}+5\,w^8
\partial_{w^8},
 \\
 {\sf
 X}_{10}&:=i\,z\partial_{z}-w^3\partial_{w^2}+w^2\partial_{w^3}-2\,w^5
\partial_{w^4}+2\,w^4\partial_{w^5}-3\,w^8\partial_{w^7}+3\,w^7
\partial_{w^8},
 \\
 {\sf
 X}_{11}&:=\partial_z+2\,i\,z\partial_{w^1}+(4\,w^1+2\,i
\,z^2)\partial_{w^2}+2\,z^2\partial_{w^3}+(3\,w^2+2\,i
\,z^3)\partial_{w^4}+
 \\
 &+
 (3\,w^3+2\,z^3)\partial_{w^5}+2\,w^2\partial_{w^6}+(4\,w^4+2\,i
\,z^4)\partial_{w^7}+(4\,w^5+2\,z^4)\partial_{w^8},
 \\
 {\sf
 X}_{12}&:=i\,\partial_{z}+2\,z\partial_{w^1}+2\,z^2\partial_{w^2}+(4
\,w^1+-2\,i\,z^2)\partial_{w^3}+(-3\,w^3+2\,z^3)\partial_{w^4}+
 \\
 &+
 (3\,w^2-2\,i\,z^3)\partial_{w^5}+2\,w^3\partial_{w^6}+(-4\,w^5+2
\,z^4)\partial_{w^7}+(4\,w^4-2\,i\,z^4)\partial_{w^8};
 \endaligned
 \right.
\end{equation}
\def\theequation{$\mathbb{M}^2$}
\begin{equation}
\footnotesize \left\{ \aligned
 {\sf X}_i&:=\partial_{w_i}, \ \ \ \ \ {
i=1,\ldots,8},
\\
 {\sf X}_9&:=z\partial_{z}+2\,w^1\partial_{w^1}+3\,w^2\partial_{w^2}+3
\,w^3\partial_{w^3}+4\,w^4\partial_{w^4}+
 \\
&+
 4\,w^5\partial_{w^5}+4\,w^6\partial_{w^6}+5\,w^7\partial_{w^7}+5\,w^8
\partial_{w^8},
 \\
 {\sf
 X}_{10}&:=i\,z\partial_{z}-w^3\partial_{w^2}+w^2\partial_{w^3}-2\,w^5
\partial_{w^4}+2\,w^4\partial_{w^5}-
 w^8\partial_{w^7}+w^7\partial_{w^8},
 \\
 {\sf
 X}_{11}&:=\partial_z+2\,i\,z\partial_{w^1}+(4\,w^1+2\,i
\,z^2)\partial_{w^2}+2\,z^2\partial_{w^3}+(3\,w^2+2\,i
\,z^3)\partial_{w^4}+
 \\
 &+
 (3\,w^3+2\,z^3)\partial_{w^5}+2\,w^2\partial_{w^6}+(2\,w^4+6
\,w^6)\partial_{w^7}+2\,w^5\partial_{w^8},
 \\
 {\sf
 X}_{12}&:=i\,\partial_{z}+2\,z\partial_{w^1}+2\,z^2\partial_{w^2}+(4
\,w^1-2\,i\,z^2)\partial_{w^3}+(-3\,w^3+2\,z^3)\partial_{w^4}+
 \\
 &+
 (3\,w^2-2\,i\,z^3)\partial_{w^5}+2\,w^3\partial_{w^6}+2\,w^5
\partial_{w^7}+(-2\,w^4+6\,w^6)\partial_{w^8};
 \endaligned
 \right.
\end{equation}
\def\theequation{$\mathbb{M}^3$}
\begin{equation}
\footnotesize \left\{ \aligned
 {\sf X}_i&:=\partial_{w_i}, \ \ \ \ \ {
i=1,\ldots,8},
\\
 {\sf X}_9&:=z\partial_{z}+2\,w^1\partial_{w^1}+3\,w^2\partial_{w^2}+3
\,w^3\partial_{w^3}+4\,w^4\partial_{w^4}+
 \\
&+
 4\,w^5\partial_{w^5}+4\,w^6\partial_{w^6}+5\,w^7\partial_{w^7}+5\,w^8
\partial_{w^8},
 \\
 {\sf
 X}_{10}&:=\partial_z+2\,i\,z\partial_{w^1}+(4\,w^1+2\,i
\,z^2)\partial_{w^2}+2\,z^2\partial_{w^3}+(3\,w^2+2\,i
\,z^3)\partial_{w^4}+
 \\
 &+
 (3\,w^3+2\,z^3)\partial_{w^5}+2\,w^2\partial_{w^6}+(2\,w^4+6
\,w^6)\partial_{w^7}+(4\,w^4+2\,i\,z^4)\partial_{w^8},
 \\
 {\sf
 X}_{11}&:=i\,\partial_{z}+2\,z\partial_{w^1}+2\,z^2\partial_{w^2}+(4
\,w^1-2\,i\,z^2)\partial_{w^3}+(-3\,w^3+2\,z^3)\partial_{w^4}+
 \\
 &+
 (3\,w^2-2\,i\,z^3)\partial_{w^5}+2\,w^3\partial_{w^6}+2\,w^5
\partial_{w^7}+(-4\,w^5+2\,z^4)\partial_{w^8}.
 \endaligned
 \right.
\end{equation}
\end{Theorem}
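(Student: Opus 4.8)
\emph{Proof strategy.} The plan is to turn the tangency condition that defines $\frak{aut}_{CR}(\mathbb{M}^j)$ into an explicit ${\sf LinCons}$ system of linear {\sc pde}'s, to solve that system by means of the {\sl LRG} algorithm, and then to verify by hand that the listed vector fields form a basis of the solution space. Since each $\mathbb{M}^j$ is rigid, an infinitesimal CR-automorphism is the real part of a holomorphic vector field
\[
\mathsf{X} \,=\, \mathcal{Z}(z,w)\,\partial_z \,+\, \sum_{l=1}^{8}\,\mathcal{W}^l(z,w)\,\partial_{w^l} ,
\]
with $\mathcal{Z},\mathcal{W}^1,\dots,\mathcal{W}^8$ holomorphic near the origin, the single constraint being that $\mathrm{Re}\,\mathsf{X}$ be tangent to $\mathbb{M}^j$. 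Writing the eight defining equations of $\mathbb{M}^j$ (those of \eqref{models} together with the two model-specific ones) in the form $\rho_m := w^m - \overline{w}^m - 2\,i\,\Theta^m(z,\overline{z}) = 0$, and using that $\mathsf{X}$ annihilates $\overline{z}$ and all $\overline{w}^l$ because it is holomorphic, the tangency relations $\big(\mathrm{Re}\,\mathsf{X}\big)\rho_m\big|_{\mathbb{M}^j} = 0$ become
\[
\mathcal{W}^m(z,w) \,-\, \overline{\mathcal{W}^m}(\overline{z},\overline{w}) \,-\, \mathcal{Z}(z,w)\,\partial_z\Theta^m(z,\overline{z}) \,-\, \overline{\mathcal{Z}}(\overline{z},\overline{w})\,\partial_{\overline{z}}\Theta^m(z,\overline{z}) \,=\, 0
\]
on $\mathbb{M}^j$; one then eliminates the $\overline{w}^l$ using the defining equations and equates to zero the coefficients of the independent powers of $\overline{z}$. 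This is exactly the explicitation carried out by the general formulas proved earlier in the paper, and\,\,---\,\,because $\mathbb{M}^j$ is rigid\,\,---\,\,the output is a ${\sf LinCons}$ system.

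The resulting {\sc pde} system couples the unknowns $\mathcal{Z},\mathcal{W}^l$ with their complex conjugates, so it cannot be fed directly to {\tt DifferentialAlgebra}; this is precisely the obstruction addressed by the {\sl bar-reduction} and by the Extended Ritt's reduction theorem stated above, whose conclusion guarantees that the {\sl LRG} algorithm returns a complete, reduced (Rosenfeld-Gr\"obner type) description of the solution set. I would thus apply {\sl LRG} separately to the three ${\sf LinCons}$ systems attached to $\mathbb{M}^1$, $\mathbb{M}^2$ and $\mathbb{M}^3$, obtain in each case a characteristic decomposition, and read off the general holomorphic $\mathsf{X}$ tangent to $\mathbb{M}^j$. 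Integrating the reduced equations forces $\mathcal{Z}$ and each $\mathcal{W}^l$ to be polynomials in $(z,w)$ depending on finitely many free real constants; counting these constants yields $12$ for $\mathbb{M}^1$ and $\mathbb{M}^2$ and $11$ for $\mathbb{M}^3$, and the explicit general solution is the real span of the fields $\mathsf{X}_1,\dots,\mathsf{X}_{12}$ (respectively $\mathsf{X}_1,\dots,\mathsf{X}_{11}$) displayed in the statement.

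Two elementary verifications then finish the proof. For $\mathbb{R}$-linear independence of $\mathrm{Re}\,\mathsf{X}_1,\dots$: the fields $\mathsf{X}_1,\dots,\mathsf{X}_8$ are the translations $\partial_{w^i}$, $\mathsf{X}_9$ is the weighted Euler field, and the remaining generators have pairwise distinct $\partial_z$-components, so no nontrivial real combination of them can vanish. For membership: substitute each displayed $\mathsf{X}_i$ into the explicit tangency {\sc pde}'s and check that every equation holds on $\mathbb{M}^j$, a finite collection of polynomial identities. The genuine difficulty lies not in these checks but in the middle step: the systems produced by the explicitation are very large, and one must be certain that the bar-augmented reduction both terminates and loses no solution\,\,---\,\,which is exactly what the Extended Ritt's reduction theorem and the correctness of the {\sl LRG} algorithm secure, the heavy computation itself being delegated to {\sc Maple}.
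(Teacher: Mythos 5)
Your proposal is correct and follows essentially the same route as the paper: forming the tangency equations, expanding in powers of $z$ and $\overline{z}$ to extract a ${\sf LinCons}$ system (the paper's $63$ equations in $33$ unknowns for $\mathbb{M}^1$, after the preliminary Lemma showing the coefficient functions are polynomial in $z$ of bounded degree), solving it via the bar-augmented {\sl LRG}/{\tt DifferentialAlgebra} machinery, and reading off the $12$, $12$ and $11$ real parameters whose coefficient fields are the displayed generators. The only cosmetic difference is that you fold the paper's intermediate degree-truncation lemma into the final integration step, which does not change the argument.
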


The paper is organized as follows. Section \ref{Differentialalgebra}
contains an overview of the necessary background concerning the
theory of differential algebras. In Section \ref{Solving PDE}, we
present the extended Ritt's reduction algorithm and we show how to
utilize the differential algebraic tools to resolve a system of
partial differential equations. In Section \ref{General description},
we present a general method/strategy to compute the Lie algebra
of infinitesimal CR-automorphisms of {\em arbitrary} generic
real analytic CR-generic submanifolds $M \subset
\mathbb C^{n+k}$. Section
\ref{Computations} is devoted to compute in detail the Lie algebra of
infinitesimal CR-automorphisms $\frak{aut}_{CR}(\mathbb M^1)$,
associated to the first model $\mathbb M^1$. However, we do not
present the intermediate computations of $\frak{aut}_{CR}(\mathbb
M^2)$ and $\frak{aut}_{CR}(\mathbb M^3)$ since they are similar to
those of $\mathbb M^1$ and offer no new aspect.
Tables of Lie brackets appear at the end.


\section{Differential algebra preliminaries}
\label{Differentialalgebra}

In this section, we present a brief overview of basic definitions,
notation and results in differential algebra. Two extensive
surveys of this subject are:~\cite{Kolchin,Ritt}.

\begin{Definition}
An operator $\delta : R \rightarrow R$ over the algebraic ring $R$ is
called a {\sl derivation operator}, if for each $a,b \in R$ we have:
\begin{eqnarray*}
\delta(a+b)&=&\delta(a)+\delta(b) \ \ \ \ \ \  \ \ \ \ \textrm{and} \\
\delta(ab)&=&\delta(a)b + a\delta(b).\\
\end{eqnarray*}
A {\sl differential ring} is a pair $(R,\Delta)$ where $R$ is a ring
equipped with a collection $\Delta=\{\delta_1,\ldots,\delta_m\}$ of
commuting derivations operators over it, satisfying:
\[
\delta_i\delta_j a = \delta_j\delta_i a, \ \ \ \ {\scriptstyle
(i,j\,=\,1\ldots m\,; \ \ a\in R).}
\]
 For simplicity, we suppress the dependence on $\Delta$ in the
 notation and denote a differential ring just by $R$. If $m=1$, then
$R$ is called an {\sl ordinary} differential ring; otherwise it will
be called {\sl partially}. An algebraic ideal $I$ of $R$ is called a
{\sl differential ideal} when it is closed under the action of
derivations of $R$, namely $\delta a\in I$ for each $\delta\in\Delta$
and $a\in I$.
\end{Definition}

\begin{Example}
The ring of polynomials $\mathbb C[x_1,\ldots,x_m]$ over the
variables $x_1,\ldots,x_m$ with rational coefficients together with
the set of operators $\partial/\partial x_1,\ldots,\partial/\partial
x_m$ is a differential ring.
\end{Example}

Let $R$ be a differential ring with
$\Delta=\{\delta_1,\ldots,\delta_m\}$. Here, we introduce a
collection of notations in differential algebra  via the
following itemized list:

\begin{itemize}
\smallskip\item[$\bullet$] We denote by $\Theta$ the free multiplicative
commutative semigroup generated by the elements of $\Delta$, namely
\[
\Theta:=\big\{\delta_1^{t_1}\delta_2^{t_2}\ldots\delta_m^{t_m} \ \ \
\
{\scriptstyle t_1,\ldots,t_m\in\mathbb N}\big\}
\]
Each element $\theta=\delta_1^{\alpha_1}\cdots \delta_m^{\alpha_m}$
of $\Theta$ is called a {\sl derivation operator} of $R$ and
furthermore the sum ${\rm ord}(\theta):=\sum_{i=1}^m t_i$ is called
the {\sl order} of $\theta$. Then $\theta a$ is said to be a
derivative of $a\in R$ of order $\ord(\theta)$.

\smallskip\item[$\bullet$] For
an arbitrary subset $S$ of $R$, set $\Theta S:= \{\theta s \mid s \in
S, \theta \in \Theta \}$. It is the smallest subset of $R$ containing
$S$ which is stable under derivation.

\smallskip\item[$\bullet$] An algebraic ideal of $R$ is called a {\em differential
ideal}, if it is closed under the derivation operators. We denote by
$(S)$ and $[S]$ respectively, the smallest algebraic and differential
ideals of $R$ containing $S$. In fact, $[S]=(\Theta S)$. This fact
provides
an algebraic approach to differential ideals which enables one to
employ algebraic means.

\smallskip\item[$\bullet$] For
a field of characteristic zero $K$, a {\sl differential
polynomial ring}:
\[
R
:=
K\{u_1,\ldots,u_n\}
:=
K\big[\Theta U\big]
\]
is the usual commutative
polynomial ring generated by $\Theta U$ over $K$, where
$U:=\{u_1,\ldots,u_n\}$ is the set of {\sl differential
indeterminate}.

\smallskip\item[$\bullet$] For two certain derivatives $\theta
u$ and $\phi u$ of a same differential indeterminate $u$, we denote
by $\lcd(\theta u, \phi u)$ the {\em least common derivative} between
$\theta u$ and $\phi u$, easily seen to be:
\[
\lcd(\theta u, \phi u)=\lcm(\theta, \phi) u.
\]

\end{itemize}\smallskip

In this paper we let
$K$ be a differential field of characteristic zero.

\begin{Definition}
Let $R=K\{U\}$ be a differential polynomial ring with the set of
indeterminates $U=\{u_1,\ldots,u_n\}$. A {\sl ranking} $>$ is an
ordering over $\Theta U$ compatible with the derivation act over
$\Theta U$, in the sense that
for each derivation $\delta \in \Theta$ and for
each $v,w \in \Theta U$ we have:
\begin{itemize}
\item [$\bullet$]{$\delta v > v$,}
\item[$\bullet$]{$v > w \  \Rightarrow \ \delta v > \delta w$.}
\end{itemize}
For each $\theta,\phi \in \Theta$ and $v,w \in U$, a ranking
$>$ for which the statement  $\ord (\theta) > \ord (\phi)$ implies
that $\theta v > \phi w$ is called {\sl orderly}. Simultaneously, if
the assumption $v > w$ gives $\theta v
> \phi w$, then $>$ is called {\sl elimination}. Moreover, for a fixed
ranking $>$
over $\Theta U$ and for a differential polynomial $p\in
R=K\{u_1,\ldots,u_n\}$, the {\sl leader} $\ld(p)$ of $p$ is the
highest derivative appearing in $p$  with respect to $>$. If
$\ld(p)=u$ and $d$ is the degree of $u$ in the expression of $p$
then, the {\sl initial} $\I(p) \in K$ is defined to be the coefficient of
$u^d$ in $p$. Finally, $u^d$ is called the {\sl rank} of $p$, denoted
by $\rank (p)$.
\end{Definition}


\section{Differential algebra and {\sc pde} systems}
\label{Solving PDE}

Each differential polynomial ring $R=K\{U\}$ can be considered as the
conventional polynomial ring $K[\Theta U]$ whose indeterminates are
derivations of $R$. This enables one to use the conventional
algebraic tools and get useful information about the differential
polynomial ring and its differential ideals. In this section, we
employ the {\em Rosenfeld-Gr\"obner} algorithm to discuss a system of
partial differential equations, using algebraic operations. For a
{\sc pde} system $\Sigma\subset R$, the Rosenfeld-Gr\"obner algorithm
presents the radical differential ideal generated by $\Sigma$ as an
intersection of a finite number of differential ideals which are
called {\sl regular differential ideals}. Those are some differential
ideals $I$ represented by a {\sl canonical representative} $C$, {\em
i.e.} a set of differential polynomials which depends only on $I$ and
the given ranking. A canonical representative of the differential
ideal $I$ helps to solve ideal membership problem, which is a key
computational tool to analyze a PDE system.
An implementation of the Rosenfeld-Gr\"obner
algorithm is available in the {\sc Maple} package {\tt diffalg}, and
it was recently renovated into the package {\tt DifferentialAlgebra}.

One of the main contributions of this paper is the use of
Rosenfeld-Gr\"obner algorithm\,\,---\,\,followed by performing some
further algebraic manipulations\,\,---\,\,for considering our {\sc
pde} systems. It is worth emphasizing that the {\sc pde} systems that we
deal with in this paper are {\em linear and admit complex
equations}. Then, we have to equip the Rosenfeld-Gr\"obner algorithm
with a certain operator which enables to treat with such systems.
Moreover, in the considerably significant class of {\em rigid}
CR-generic submanifolds ({\em see} the end of section \ref{General
description} for definition), the under consideration {\sc pde}
systems are not only complex and linear but also {\em with constant
coefficients} and it is therefore reasonable to consider such
systems more seriously.  For brevity, let us call this type of systems
by ${\sf LinCons}$ systems and also similarly, let us call each linear
differential polynomial with constant coefficients ${\sf LinCons}$
polynomial\footnote{Not every linear differential
polynomial is a ${\sf LinCons}$ one, in general. For example, as an
element of $\mathbb C(x,y)[u,v]$, the polynomial
$p:=x^2y\,u_x+2y\,v_{xy}$ is linear while its coefficients are not
constant.}. Computation with the Rosenfeld-Gr\"obner algorithm are
comparatively less expensive.  Furthermore, in this case
{\em there is no need longer to decompose the differential ideal
generated by the system into regular differential ideals} (a
complicated aspect of the general algorithm).

\subsection{Extended Ritt's reduction algorithm}
Let us therefore recall the Ritt's reduction algorithm,
restricted to ${\sf LinCons}$
differential polynomials. Therefore, the reduction algorithm
described here shall be a {\em weak version of the Ritt's reduction
algorithm} in comparison with the version of~\cite{Ritt,
Kolchin}. Let us recall the definition of partial divisibility for
differential polynomials.

\begin{Definition}
Consider two ${\sf LinCons}$ differential polynomials $p_1$ and $p_2$.
We say that {\sl $p_2$ reduces $p_1$ due to the Ritt's reduction
algorithm}, whenever there exists a certain derivation $\theta$ with
$\rank (p_1) = \rank (\theta p_2)$. In this case, the result of
reduction is:
\[
r
:=
p_1
-
\frac{{\I({p_1})}}{{\I({p_2})}}\,
\theta p_2.
\]
When $\theta$
is proper, we call $r$, the {\em partial remainder} of $p_1$ on
division by $p_2$.
\end{Definition}

One notices that in this  definition, if $\rank (p_1) =\rank (p_2)$
then $\theta$ must be the identity element of $\Theta$ and the Ritt's
reduction coincides with the conventional division algorithm for
multivariate polynomial rings.

\begin{Theorem}{\sc (Ritt's reduction theorem)}
\label{Ritt-Thm} Consider a differential polynomial ring
$R=K\{u_1,\ldots,u_n\}$
over a field $K$ of characteristic zero,
let $\Theta$ be the set of derivation
operators and let `$>$'
be a ranking over $\Theta U$. Furthermore, assume that
$p\in R$ is a ${\sf LinCons}$ differential polynomial and
let $Q$ be a
finite set  of ${\sf LinCons}$ differential
polynomials. Then, there exists $r \in R$,
and for each $q\in Q$, there exists
$\theta_q \in \Theta$ and $c_q\in K$ satisfying the
following conditions:

\begin{itemize}
\smallskip\item
$p=\big(\sum_{q\in Q}\,c_q\,\theta_q\, q\big) + r,$

\smallskip\item{$\rank (r) <
\rank (\theta_qq)$, for each $q$ appearing in the summation,}

\smallskip\item{no term of $r$ is a derivation of $\rank (q)$ for each $q \in Q$.}

\end{itemize}
\end{Theorem}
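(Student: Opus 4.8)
The plan is to prove the classical Ritt reduction theorem (Theorem \ref{Ritt-Thm}) by a well-founded induction on the rank of $p$ with respect to the ranking `$>$', mimicking the Euclidean-division structure of the usual multivariate reduction algorithm but carried out in the polynomial ring $K[\Theta U]$ whose indeterminates are the derivatives $\theta u_i$. First I would set up the induction: since `$>$' is a ranking, the induced order on $\Theta U$ is a well-order on the (countable) set of derivatives that actually occur, so ``rank of a differential polynomial'' takes values in a well-ordered set and strictly decreasing chains terminate. The base case is when no $q \in Q$ reduces $p$, i.e.\ no term of $p$ is a derivative of $\rank(q)$ for any $q$; then we simply take $r := p$ and all $c_q := 0$, and the three bullet conditions hold vacuously or trivially.

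For the inductive step, suppose some $q \in Q$ reduces $p$: there is $\theta_0 \in \Theta$ with $\rank(p) = \rank(\theta_0 q)$, meaning $\ld(p) = \theta_0 \ld(q)$ and, since we are in the ${\sf LinCons}$ (linear) setting, every derivative occurs to degree one, so the rank is just that single derivative. I would form the one-step reduct
\[
p' := p - \frac{\I(p)}{\I(q)}\,\theta_0\, q,
\]
which is again a ${\sf LinCons}$ differential polynomial (linearity and constant coefficients are preserved by $K$-linear combinations and by applying derivation operators $\theta_0$, since $K$ is the field of constants here). By construction the leading term of $p$ is cancelled, so $\rank(p') < \rank(p)$. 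Applying the induction hypothesis to $p'$ yields $r$ and coefficients $c_q', \theta_q'$ with $p' = \big(\sum_{q \in Q} c_q'\, \theta_q'\, q\big) + r$ and $\rank(r) < \rank(\theta_q'\, q)$ for each contributing $q$, and no term of $r$ a derivative of any $\rank(q)$. Substituting back gives $p = \frac{\I(p)}{\I(q)}\,\theta_0\, q + \big(\sum c_q'\, \theta_q'\, q\big) + r$; collecting the two contributions of the distinguished $q$ (when $q$ appears in both) into a single $c_q \theta_q q$ term is where a small bookkeeping point arises, but in the linear case the derivatives $\theta u$ are genuine indeterminates and the coefficients live in $K$, so the collection is just addition of polynomials and poses no difficulty. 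The rank inequalities for the $q$ that was used hold because $\rank(\theta_0 q) = \rank(p) > \rank(p') \geqslant \rank(r)$; for the other $q$ they are inherited directly from the induction hypothesis.

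The one genuinely delicate point I expect to have to argue carefully is \emph{termination}, i.e.\ that the inductively-defined process actually halts and that the final $r$ satisfies the third, stronger bullet (no term of $r$ — not merely no leading term — is a derivative of $\rank(q)$ for any $q \in Q$). Cancelling the leader of $p$ may introduce new, lower-ranked derivatives in $p'$, some of which could again be derivatives of some $\rank(q)$; one must see that repeating the reduction on those sub-leaders still strictly decreases the rank and that the whole chain is finite. This is exactly where well-orderedness of the ranking is used in full strength: each elementary reduction either eliminates the current leader or, after finitely many steps at the current rank level (there are only finitely many monomials of a given rank in a fixed polynomial), drops strictly below it, so no infinite descending sequence is possible. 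Packaging this as a double induction — outer induction on $\rank(p)$, inner on the number of terms of $p$ that are still reducible — gives a clean termination argument and simultaneously guarantees the fully-reduced form in the third bullet. Everything else — additivity and the Leibniz rule for the $\theta$'s, the fact that ${\sf LinCons}$ is closed under the operations used, and the preservation of the field $K$ of constants — is routine and can be stated without grinding through it.
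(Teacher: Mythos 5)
Your proposal is correct and is essentially the paper's own argument: the paper presents the reduction as an explicit while-loop (Algorithm RittReduction), proves termination from the well-ordering of the ranking, and establishes correctness via the invariant $p=\big(\sum_{q\in Q}c_q\theta_q q\big)+h+r$, which is exactly your one-step reduct plus induction on rank. Your repackaging as a well-founded (double) induction, including the case split between a reducible leader and an irreducible leading term that gets moved into $r$, matches the algorithm's two branches step for step.
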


Here, the differential polynomial $r\in R$ is called the {\sl
remainder} of $p$ on division by $Q$. In order to prove this Theorem,
let us display first the Ritt's reduction algorithm ensuing its
assertion. Then, the correctness and termination of this algorithm
proves Theorem \ref{Ritt-Thm}.

\begin{algorithm}[H]
\caption{RittReduction}
\begin{algorithmic}
\label{Ritt}
\REQUIRE{$p\in R$, $Q\subset R$; a finite set and {$>$}; a ranking}
\ENSURE{$r$; a remainder of $p$ on division by $Q$}
\STATE{$h:=p;$}
\STATE{$r:=0;$}
\WHILE{$h\ne 0$}
    \IF{ there is some $q \in Q$ and $\theta \in \Theta$ with $\rank (h)
= \rank (\theta q)$}
    \STATE{$h:=h-\frac{\I(h)}{\I(q)}\,\theta q$;}
    \ELSE
        \STATE{$r:=r+\I(h)\,\rank (h);$}
        \STATE{$h:=h-\I(h)\,\rank (h);$}
    \ENDIF
\ENDWHILE \STATE{{\bf Return} ($r$)}
\end{algorithmic}
\end{algorithm}

\begin{proof}
The termination of this algorithm follows from the well-ordering
property of $>$ ({\em cf.} \cite{Kolchin}). Namely as one observes,
the rank of $h$ decreases as one goes along the steps of the algorithm
and hence, it terminates after a finitely many steps, when we will
have $h=0$.

Now let us consider the correctness of the algorithm. For
this, we claim that the equality:
\begin{eqnarray}
\label{eq:1} p=\big(\sum_{q\in Q}\,c_q\,\theta_q\, q\big) + h + r
\end{eqnarray}
holds at each step of the algorithm. If we prove this claim then, the
final value $h=0$ of $h$ can concludes the assertion. To prove the
claim, we consider two cases:
\begin{description}
\item[Case 1] If a division occurs  by a polynomial, say $q_i$,
then the right hand side of (\ref{eq:1}) is equal to:
\[\Big(\sum_{q\in
Q}\,c_q\,\theta_q\, q + c\, \theta_{q_i}\, q_i\Big) +
\big(h-c \,\theta_{q_i}
q_i)\big) + r
\]
with $c=\I({q_i})/ \I(h)$. Therefore, it visibly is still equal to $p$.
\item[Case 2] If no
division arises, then the right hand side of (\ref{eq:1}) is equal
to:
\[
\sum_{q\in Q}\,c_q\,\theta_q \,q
+
\big(h-\I(h)\, \rank(h)\big) +
\big(r+\I(h)\, \rank(h)\big),
\]
 and this is also equal to $p$.
\end{description}
In both cases, the equation (\ref{eq:1}) is satisfied at each step
of the algorithm. Moreover, one convinces oneself that the last two
assertions of the Theorem hold according to the structure of this
algorithm.
\end{proof}

One should notice that, as long as the field $K$ is the one of real
numbers $\mathbb R$, then the above version of the Ritt's reduction
algorithm works as well to compute the remainder of the division of a
${\sf LinCons}$ differential polynomial by a finite system of ${\sf
LinCons}$ {\sc pde}'s. However, the coefficients of the {\sc pde}
systems that we consider in this paper belong to the field
$\mathbb{C}=\mathbb{R}(i)$ with $i=\sqrt{-1}$. In this case, we need to
perform also the {\em complex conjugation} to obtain a full remainder.
For this, we need the following definition and theorem.

\begin{Definition}
Let $R:=\mathbb{C}\{u_1,\ldots,u_n\}$ be a differential polynomial
ring with $u_j = {\rm Re}(u_j) + i\, {\rm Im}(u_j)$ for
$j=1,\ldots,n$ as the unknown functions. We define the {\sl bar
operation} $\overline{\bullet} : R \rightarrow R$ by:
\begin{itemize}
\item{$\overline{{\rm Re}(a) + i\,{\rm Im}(a)} = {\rm Re}(a) -
i\,{\rm Im}(a)$ for each $a\in \mathbb{C}$,} \item{$\overline{{\rm
Re}(u_j) + i\,{\rm Im}(u_j)} = {\rm Re}(u_j) - i\,{\rm Im}(u_j)$ for
each $j=1,\ldots,n$.}
\end{itemize}
\end{Definition}

As a matter of fact, the bar operator is compatible with the
derivations; namely, for each (real) $\theta \in \Theta$ and each
$j=1,\ldots,n$, one has:
\[
\overline{\theta u_j} = \theta \overline{u_j}.
\]
This allows one to insert the bar reduction operator in the Ritt's
reduction algorithm.

\begin{Theorem}
\label{ModRitt-Thm}
{\sc (Extended Ritt's reduction theorem)} Consider a differential
polynomial ring $R=\mathbb{C}\{u_1,\ldots,u_n\}$ over the field of
complex numbers, let $\Theta$ be the set of its derivation operators,
and let `$>$' be a ranking over $\Theta U$. Furthermore, assume that
$p\in R$ is a ${\sf LinCons}$ differential polynomial and let $Q$ be a
finite set of ${\sf LinCons}$ differential polynomials. Then, there
exists $r \in R$, and for each $q\in Q$, there exists $\theta_q,
\theta_{\overline{ q}} \in
\Theta$ and $c_q, c_{\overline{ q}}\in K$
satisfying the following conditions:

\begin{itemize}

\smallskip\item
$p= \big( \sum_{q\in Q}\,c_q\,\theta_q\, q \big) +
\big( \sum_{q\in
Q}\,c_{\overline{q}}\,\theta_{\overline{q}} \,\overline{q} \big) + r,$

\smallskip\item
for each $q$ appearing in this summation we have:
\[\rank(r)
<
{\rm min}_>
\big\{\rank(\theta_qq),
\rank(\theta_{\overline{q}}{\overline{q}})
\big\},
\]

\smallskip\item{no term of $r$ is a derivation of either $\rank(q)$ or
$\rank(\overline{q})$ for each $q \in Q$.}
\end{itemize}
\end{Theorem}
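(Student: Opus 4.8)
The plan is to reduce the Extended Ritt's reduction theorem to the ordinary Ritt's reduction theorem (Theorem~\ref{Ritt-Thm}) by enlarging the input system with the conjugates of its members. Concretely, given the finite set $Q = \{q_1,\ldots,q_s\}$ of ${\sf LinCons}$ differential polynomials, form the augmented set $Q^{\sharp} := Q \cup \overline{Q}$, where $\overline{Q} := \{\overline{q_1},\ldots,\overline{q_s}\}$. The first step is to observe that $Q^{\sharp}$ is again a \emph{finite} set of ${\sf LinCons}$ differential polynomials: the bar operation fixes $\Theta U$ pointwise on the \emph{real and imaginary parts} of each indeterminate and merely conjugates the complex scalar coefficients, so $\overline{q}$ is linear with constant (complex) coefficients whenever $q$ is; moreover, by the displayed identity $\overline{\theta u_j} = \theta\,\overline{u_j}$, the bar operator commutes with every (real) derivation $\theta \in \Theta$, which is exactly what guarantees that conjugation respects the differential structure.

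Once $Q^{\sharp}$ is in hand, the second step is simply to apply Theorem~\ref{Ritt-Thm} (equivalently, run Algorithm~\ref{Ritt}) to the pair $(p, Q^{\sharp})$ with respect to the given ranking $>$. This yields a remainder $r \in R$ together with, for each member of $Q^{\sharp}$, a derivation operator and a scalar in $K$. Relabelling these data according to whether the member of $Q^{\sharp}$ came from $Q$ or from $\overline{Q}$ produces exactly the operators $\theta_q, \theta_{\overline{q}} \in \Theta$ and coefficients $c_q, c_{\overline{q}} \in K$ demanded by the statement, and the decomposition
\[
p = \Big( \sum_{q \in Q} c_q\, \theta_q\, q \Big) + \Big( \sum_{q \in Q} c_{\overline{q}}\, \theta_{\overline{q}}\, \overline{q} \Big) + r
\]
is precisely the conclusion $p = \big(\sum_{q' \in Q^{\sharp}} c_{q'}\,\theta_{q'}\, q'\big) + r$ of Theorem~\ref{Ritt-Thm}, merely split into the two requested sums. (If some $q_j$ happens to equal some $\overline{q_\ell}$, or if $q_j$ is already real, the corresponding terms may be collected; this causes no difficulty, and one may always pad with zero coefficients so that \emph{both} sums are indexed by all of $Q$.) The rank inequality $\rank(\theta_{q'} q') > \rank(r)$ from Theorem~\ref{Ritt-Thm}, holding for every $q'$ appearing in the summation, gives in particular
\[
\rank(r) < \min{}_{>}\big\{\rank(\theta_q q),\ \rank(\theta_{\overline{q}}\overline{q})\big\}
\]
for each $q \in Q$, which is the second bullet; and the third bullet of Theorem~\ref{Ritt-Thm}, that no term of $r$ is a derivative of $\rank(q')$ for any $q' \in Q^{\sharp}$, unpacks verbatim into the statement that no term of $r$ is a derivative of $\rank(q)$ or of $\rank(\overline{q})$ for any $q \in Q$.

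The only point that genuinely needs care\,\,---\,\,and the step I expect to be the main (though modest) obstacle\,\,---\,\,is verifying that the bar-augmentation does not damage the hypotheses needed to invoke Theorem~\ref{Ritt-Thm}: namely that $\overline{q}$ is still a \emph{bona fide} element of the differential polynomial ring $R = \mathbb{C}\{u_1,\ldots,u_n\}$ with constant coefficients, and that the notions $\rank$, $\I$, $\ld$ behave sensibly under conjugation so that the reduction loop terminates exactly as before. This rests entirely on the compatibility $\overline{\theta u_j} = \theta \overline{u_j}$ noted just before the theorem, together with the fact that conjugation is a ring automorphism of $\mathbb{C}$ fixing $\mathbb{R}$, hence carries the monomial-coefficient structure to one of the same shape; the well-ordering of $>$ then yields termination of Algorithm~\ref{Ritt} on $(p, Q^{\sharp})$ exactly as in the proof of Theorem~\ref{Ritt-Thm}, with the rank of the running remainder $h$ strictly decreasing. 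Everything else is bookkeeping: re-indexing the output of the ordinary algorithm into two sums, one over $Q$ and one over $\overline{Q}$.
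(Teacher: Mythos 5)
Your proposal is correct and is essentially the paper's own argument: the paper merely states that the proof is ``similar to that of Theorem~\ref{Ritt-Thm}'' and displays Algorithm~\ref{Mod-Ritt}, which adds a bar-reduction branch to Algorithm~\ref{Ritt} --- and running that extended algorithm on $(p,Q)$ is exactly running the ordinary algorithm on $(p, Q\cup\overline{Q})$, which is your reduction. Your version is in fact slightly more explicit than the paper's, since you isolate and verify the one substantive point (that $\overline{q}$ remains ${\sf LinCons}$ and that $\overline{\theta u_j}=\theta\,\overline{u_j}$ makes $\rank$, $\I$, and the termination argument go through unchanged), which the paper leaves implicit.
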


Here, the differential polynomial $r$ is called the {\sl
full-remainder} of $p$ on division by $Q$. The proof of this Theorem
is similar to that of Theorem \ref{Ritt-Thm}. Let us display the
following algorithm like  Algorithm \ref{Ritt} extended by the bar
reduction.

\begin{algorithm}[H]
\caption{ExtendedRittReduction}
\begin{algorithmic}
\label{Mod-Ritt} \REQUIRE{$p\in R$, $Q\subset R$; a finite set and
{$>$}; a ranking}
\ENSURE{$r$; a remainder of $p$ on division by $Q$}
\STATE{$h:=p;$}
\STATE{$r:=0;$}
\WHILE{$h\ne 0$}
    \IF{ there is some $q \in Q$ and $\theta \in \Theta$ with $\rank(h)
= \rank(\theta q)$}
        \STATE{$h:=h-\frac{\I(h)}{\I(q)}\,\theta q$;}
    \ELSE
        \IF{ there is some $q \in Q$ and $\theta \in \Theta$ with
$\rank(h) = \rank(\theta \overline{q})$}
            \STATE{$h:=h-\frac{\I(h)}{\I(q)}\,\theta \overline{q}$;}
        \ELSE
            \STATE{$r:=r+\I(h) \,\rank(h);$}
            \STATE{$h:=h-\I(h)\, \rank(h);$}
        \ENDIF
    \ENDIF
\ENDWHILE \STATE{{\bf Return} ($r$)}
\end{algorithmic}
\end{algorithm}

\subsection{Rosenfeld-Gr\"obner algorithm} As
mentioned before, to solve a PDE system $\Sigma$,
we use the Rosenfeld-Gr\"obner algorithm to decompose the radical of
$[\Sigma]$, into some new PDE systems, presented by
explicit generators.
These generators have novel properties which leads to do a complete
analysis of $\Sigma$.
The main Rosenfeld-Gr\"obner algorithm as presented in
\cite{Blop95,Boulier} requires some recursive loops to construct the
mentioned decomposition. However, as
we consider the Rosenfeld-Gr\"obner algorithm in the
special case of the ${\sf LinCons}$ {\sc pde} systems,
{\em there is no need longer to
decompose the differential ideal generated by the system into regular
differential ideals}. So, it is convenient to provide an adapted and
computationally simpler version of the Rosenfeld-Gr\"obner algorithm
to deal with just ${\sf LinCons}$ {\sc pde}s. Let us call this
algorithm by {\sl LRG} which stands for the {\sl \bf L}{\sf incons} {\sl
\bf R}osenfeld-{\sl \bf G}r\"obner algorithm. First, we need the
definition
of $\Delta$-polynomial\,\,---\,\,similar in spirit to that in
Gr\"obner bases theory\,\,---\,\,which plays a crucial role.

\begin{Definition}
Consider two ${\sf LinCons}$ differential polynomials $p_1$ and $p_2$
with $\ld(p_i)=\theta_i\, u_i$, $i=1,2$. Then, the $\Delta$-{\sl
polynomial} of $p_1$ and $p_2$ is defined as:
\begin{eqnarray*}
\Delta(p_1,p_2) =\left\{
\begin{array}{ll}
 \lc(p_2)\frac{\theta_{1,2}}{\theta_1} p_1 -
\lc(p_1)\frac{\theta_{1,2}}{\theta_2} p_2 & u_1=u_2,\\
0& u_1 \ne u_2,\\
\end{array}
\right.
\end{eqnarray*}
where $ \theta_{1,2} = \lcd(\theta_1,\theta_2)$.
\end{Definition}

 The aim of calculating the $\Delta$-polynomial of two differential
polynomials is
 in fact to remove their leading derivatives to obtain (probably)
  a new leading derivative.

  Now, let us describe the LRG algorithm.
  If $\Sigma \subset R$ is a subset of a differential ring $R$ (in fact,
a {\sc pde} system) then,
   $[\Sigma]$ denotes the smallest
differential ideal of $R$, containing $\Sigma$ and {\em closed under
complex conjugation}.

\begin{algorithm}[H]
\caption{LRG algorithm}
\begin{algorithmic}
\label{LRG} \REQUIRE{$\Sigma$; a finite set of ${\sf LinCons}$
differential polynomials, {$>$}; a ranking} \ENSURE{$G$; a canonical
representative for $[\Sigma]$} \STATE{$G:=\Sigma$;}
\STATE{$P:=\{\{p_1,p_2\} \mid p_1,p_2 \in G\}$;} \WHILE{$P\ne \{\}$}
    \STATE{Select and remove $\{p_1,p_2\} \in P$;}
    \STATE{$h:=\Delta(p_1,p_2)$;}
    \STATE{$r:=$ExtendedRittReduction$(h,G,>)$;}
    \IF{$r\ne 0$ }
        \STATE{$P:=P \cup \{\{r,g\} \mid g\in G\}$;}
        \STATE{$G:=G \cup \{r\}$;}
    \ENDIF
\ENDWHILE
\STATE{{\bf Return} ($G$)}
\end{algorithmic}
\end{algorithm}

The following theorem shows the termination and correctness of the
algorithm.

\begin{Theorem}
The following statements hold:

\begin{itemize}

\smallskip\item[(a)]{
LRG algorithm terminates in a finite number of steps.}

\smallskip\item[(b)]{
If $G$ is the canonical representative of $[\Sigma]$ then,
any full-remainder of a ${\sf LinCons}$ differential polynomial $p\in
R$ on division by $G$ is zero if and only if $p \in [\Sigma]$.}
\end{itemize}

\end{Theorem}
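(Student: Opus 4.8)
The plan is to transplant Buchberger's algorithm and criterion from Gr\"obner basis theory into the differential setting, following the Rosenfeld--Gr\"obner scheme of \cite{Blop95,Boulier}, while exploiting the two simplifications that our situation forces: every differential polynomial in play is {\sf LinCons}, hence homogeneous linear, so it possesses a \emph{single} leader $\ld(p)=\theta u_i$ with $\rank(p)=\ld(p)$; and the reduction used throughout is the extended Ritt step of Algorithm~\ref{Mod-Ritt}, which may reduce a polynomial by a member $q$ of $G$ \emph{or} by its bar-conjugate $\overline q$. Throughout, $G$ denotes the terminal value of the \textbf{while} loop of Algorithm~\ref{LRG}.

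For part~(a), I would argue by a Noetherianity argument on leaders. Each time the inner call \texttt{ExtendedRittReduction}$(h,G,>)$ returns a nonzero $r$, the last clause of Theorem~\ref{ModRitt-Thm} forces $\ld(r)$ to be a derivative of no $\ld(q)$ --- equivalently of no $\ld(\overline q)=\ld(q)$ --- with $q\in G$; hence adjoining $r$ to $G$ \emph{strictly} enlarges the $\Theta$-stable subset $L(G):=\bigcup_{g\in G}\Theta\,\ld(g)$ of $\Theta U=\bigsqcup_{i=1}^{n}\Theta u_i$. Identifying each $\Theta u_i$ with $\N^m$ and invoking Dickson's lemma, every ascending chain of such up-sets stabilizes; so only finitely many polynomials are ever appended to $G$, whence the pair list $P$ is enlarged only finitely often, and since each traversal of the \textbf{while} loop removes one pair, the loop halts.

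For part~(b), the implication ``some full-remainder of $p$ is $0$ $\Rightarrow$ $p\in[\Sigma]$'' is immediate from Theorem~\ref{ModRitt-Thm}, which writes $p=\big(\sum_q c_q\theta_q q\big)+\big(\sum_q c_{\overline q}\theta_{\overline q}\overline q\big)$ with $q\in G$, together with the evident induction $G\subseteq[\Sigma]$ over the run of LRG (the seed $\Sigma$ lies in $[\Sigma]$, and each appended $r$ comes from a $\Delta$-polynomial of two members of $G$ by subtracting $\theta q$- and $\theta\overline q$-terms, all lying in $[\Sigma]$ since $[\Sigma]$ is a differential ideal closed under conjugation). The opposite implication reduces, since any full-remainder $r$ of $p\in[\Sigma]$ is again in $[\Sigma]$ and is fully reduced, to the statement: \emph{a fully reduced element of $[\Sigma]$ is zero}. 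Here I would first record that at termination $P=\varnothing$, so every pair $\{g_1,g_2\}\subseteq G$ was processed and its $\Delta$-polynomial reduces to $0$ modulo the final $G$ (if the computed full-remainder had been nonzero it was appended, and then $\Delta(g_1,g_2)$ reduces through it to $0$). Next, because $p$ is homogeneous linear and $\Sigma$ --- hence $G$ --- has constant coefficients, the degree-one part of a differential-ideal representation of $p\in[\Sigma]=[\Sigma\cup\overline\Sigma]$ yields a finite combination $p=\sum_j c_j\,\theta_j s_j$ with $c_j\in K$, $\theta_j\in\Theta$, $s_j\in G\cup\overline G$. One then runs the classical Buchberger descent on $\mu:=\max_{>}\{\rank(\theta_j s_j)\}$: among representations of $p$ choose one with $\mu$ minimal and then with fewest indices attaining $\mu$; if $\mu>\rank(p)$ the terms of rank $\mu$ cancel in pairs, each cancelling pair being rewritten as a scalar multiple of a $\Delta$-polynomial plus strictly lower-rank terms, and substituting the vanishing reduction of that $\Delta$-polynomial lowers $\mu$ or the number of top terms, contradicting minimality. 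Hence $\mu=\rank(p)$, so $\ld(p)$ is a derivative of some $\ld(g)$ or $\ld(\overline g)$, $p$ is top-reducible by Algorithm~\ref{Mod-Ritt}, and descending induction on $\rank(p)$ forces $p$ to reduce to $0$; for a fully reduced $p$ this means $p=0$, as required.

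The step I expect to cost the most care is the one hidden in ``each cancelling pair is rewritten as a multiple of a $\Delta$-polynomial'': the combination $p=\sum_j c_j\theta_j s_j$ lets $s_j$ range over $G\cup\overline G$, whereas LRG only forms and reduces the $\Delta$-polynomials of pairs \emph{within} $G$, so one must also dispatch cancellations between $\theta g$ and $\theta'\overline{g'}$ and between $\theta\overline g$ and $\theta'\overline{g'}$, with $g,g'\in G$. The purely conjugated ones are harmless: $\Delta(\overline g,\overline{g'})=\overline{\Delta(g,g')}$, and via $\overline{\theta u_j}=\theta\overline{u_j}$ the extended Ritt step commutes with conjugation, so conjugating a reduction of $\Delta(g,g')$ to $0$ produces one of $\Delta(\overline g,\overline{g'})$. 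For the genuinely mixed ones I would invoke Buchberger's chain (``second'') criterion with $g'$ in the middle: since for the pair to be nonzero the leaders must be derivatives of the same $u_i$, one has $\ld(g')=\ld(\overline{g'})\mid\lcd(\ld(g),\ld(\overline{g'}))$, so $\Delta(g,\overline{g'})$ is dominated by $\Delta(g,g')$ --- which reduces to $0$ because $P=\varnothing$ --- and by $\Delta(g',\overline{g'})=\overline{\lc(g')}\,g'-\lc(g')\,\overline{g'}$, which reduces to $0$ in two steps (first by $g'$, then, via the bar step, by $\overline{g'}$). Once these pairs are accounted for, $G\cup\overline G$ satisfies Buchberger's criterion, the extended reduction is confluent, ``any full-remainder of $p$ on division by $G$'' is well defined and equals the (zero) normal form when $p\in[\Sigma]$ --- which is also what underlies the assertion that $G$ is a \emph{canonical} representative --- and the descent of part~(b) goes through verbatim.
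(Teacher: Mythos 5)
Your proposal is correct in substance and pursues the same two\--pronged strategy as the paper (an ascending\--chain argument for termination, a Buchberger\--type confluence argument for correctness), but it executes both prongs by genuinely different routes and in much greater detail. For part (a), the paper forms the chain of differential ideals generated by the successive sets of leaders and invokes the Ritt--Raudenbush basis theorem, with a footnote needed to guarantee that these ideals are radical; you instead note that each appended remainder strictly enlarges the $\Theta$\--stable up\--set of leaders and conclude by Dickson's lemma. Your route is more elementary and better suited to the ${\sf LinCons}$ setting, since it bypasses the radicality discussion altogether; the one adjustment needed is that, because the third clause of Theorem~\ref{ModRitt-Thm} excludes derivatives of both $\rank(q)$ and $\rank(\overline q)$, your up\--set should be $\bigcup_{g\in G}\big(\Theta\,\ld(g)\cup\Theta\,\ld(\overline g)\big)$ rather than resting on the identity $\ld(\overline q)=\ld(q)$, which fails literally when conjugates are realized as separate tag indeterminates, as the paper's own Remark suggests they are. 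For part (b), the paper argues in four sentences: Rosenfeld's Lemma places a partial remainder of $p$ in $(\Sigma\cup\overline\Sigma)$, the $\Delta$\--polynomials ``contain'' the S\--polynomials, and Buchberger's criterion finishes; it also omits the easy converse direction, which you supply. You instead run the Buchberger descent directly on a minimal linear representation $p=\sum_j c_j\theta_j s_j$ with $s_j\in G\cup\overline G$ (legitimate by extracting the degree\--one part, since everything is homogeneous linear with constant coefficients), and\,---\,crucially\,---\,you isolate the mixed pairs $\Delta(g,\overline{g'})$, which the LRG loop never forms and which the paper's appeal to Buchberger's criterion silently ignores; this is the real soft spot of the published argument, and your chain\--criterion patch is the right idea. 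The only caveat there is that the verification that $\Delta(g',\overline{g'})$ reduces to zero (and indeed whether such mixed pairs are nontrivial at all) depends on how one formalizes leaders of conjugated polynomials, a foundational point the paper leaves ambiguous; your treatment inherits that ambiguity but is, on balance, more complete and more self\--aware than the paper's own proof.
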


\begin{proof}
(a) The termination of the algorithm is guaranteed by the
Ritt-Raudenbush basis Theorem (the analogue of the Hilbert basis
theorem for polynomial rings). According to this theorem, since
$K=\mathbb{C}$ is Noetherian with respect to the radical
differential ideals, then $R$ is too ({\em cf.} \cite{Kolchin}).

Now,
using {\em reductio ad absurdum}, let us assume that the algorithm does not
terminate for a finite set $\Sigma$. Thus, we have an ascending chain
of ideals $[\Sigma_1] \subset [\Sigma_2] \subset \cdots$ which does
not stabilize where $\Sigma_i$ is the set of leading derivatives of
the differential polynomials at the $i$-th step of the execution of
the algorithm (by the $i$-th step we mean computing the $i$-th new
polynomial and adding it to $G$). One can observe that $\Sigma_i$
contains ${\sf LinCons}$ polynomials and therefore $[\Sigma_i]$ is a
radical ideal, namely a radical differential ideal\footnote{One
should notice that since the base field contains $\mathbb{Q}$, then
the radical of a differential ideal is a radical differential ideal
({\em cf.} {\cite{Sofi}, Proposition 2.1}).}. This contradicts the
Noetherianity of $R$ and so, proves the termination.

(b) To prove the correctness of the algorithm, it is sufficient to
show that if $p \in [\Sigma]$ is a ${\sf LinCons}$ differential
polynomial then its full-remainder  on division by $G$ is zero.
Since $p \in [\Sigma]$, then one can conclude by
Rosenfeld's Lemma ({\em see} \cite{Kolchin}, Chap. III, Sect. 8,
Lemma 5), that a partial remainder of $p$, say $p'$, on division by $G$
belongs to $(\Sigma \cup \overline{\Sigma})$. On the other hand, the set
of $\Delta$-polynomials
contains also S-polynomials ({\em cf.} \cite{becker})\,\,---\,\,note
that the polynomials, under consideration in this paper, are ${\sf
LinCons}$. Moreover, in {\sc ExtendedRittReduction} algorithm, we
consider the complex conjugation of any computed polynomial. These
imply that
\[
\text{\sc ExtendedRittReduction}(p',G,>)=0,
\]
 according to the Buchberger's criterion ({\em see} \cite{becker},
Theorem 5.48). Therefore, any full-remainder of $p$ will be equal to
zero.
\end{proof}

At the end of this section, let us illustrate with the help of an
example, how one can employ the {\sc Maple} package  {\tt
DifferentialAlgebra}\footnote{For more details on this package, we
refer to {\tt http://www.maplesoft.com/support/help}} to handle and
solve a ${\sf LinCons}$ {\sc pde} system.

\begin{Example}
 Consider the following ${\sf LinCons}$ {\sc pde} system $\Sigma\subset
\mathbb Q(x,y)[u,v]$:
\begin{eqnarray*}
\Sigma:= \left\{
\begin{array}{rcl}
 \frac{\partial^2}{\partial y^2} u(x,y) - \frac{\partial^2}{\partial
x^2} v(x,y) - \frac{\partial^2}{\partial x \partial y} v(x,y)&=&0,\\
 \frac{\partial^2}{\partial x^2} v(x,y) - \frac{\partial^2}{\partial
y^2} v(x,y) + v(x,y) &=&0,\\
 \frac{\partial^2}{\partial x^2} u(x,y) - \frac{\partial^2}{\partial x
\partial y} u(x,y) &=&0.\\
\end{array}
\right.
\end{eqnarray*}
where $\mathbb{Q}(x,y)$ is the field of fractions of the polynomial
ring $\mathbb{Q}[x,y]$. First, we must call the desired package to
make it available:
\begin{verbatim}
[> with(DifferentialAlgebra);
\end{verbatim}
and continue with the following three steps.

\begin{itemize}
\item[Step 1.] In this step, we set:
\[
\Sigma:=\{p_1:=u_{y,y}-v_{x,x}-v_{x,y},\,\,\,
p_2:=v_{x,x}-v_{y,y}+v,\,\,\,p_3:=u_{x,x}-u_{x,y}\}.
\]
 Furthermore, we define the differential ring $\mathbb{Q}(x,y)[u,v]$
equipped with a
ranking as follows.
\begin{verbatim}
[> R:=DifferentialRing(blocks=[[u, v]], derivations=[x, y]);
\end{verbatim}
This ring has $u,v$ as unknown functions of the variables $x,y$ and
also, it admits the set of derivations $\Delta=\{\frac{\partial}{\partial
x},\frac{\partial}{\partial y}\}$. Moreover, here the picked ranking
is orderly, namely  the following statement holds:
\[
\theta u < \delta v \Leftrightarrow \ord \ \theta < \ord \ \delta.
\]
for two arbitrary elements $\delta, \theta$ of
$\Theta=\{\frac{\partial^{i+j}}{\partial x^i \partial y^j}, \ \
i,j\in\mathbb N\}$.
 \item[Step 2.] Next, we employ the
Rosenfeld-Gr\"obner algorithm to compute a canonical representative
for $[\Sigma]$,:
\begin{verbatim}
[> CP := RosenfeldGroebner([p1, p2, p3], R);
\end{verbatim}
Now to see the equations of, for example, the first component of $CP$,
we enter the following command:
\begin{verbatim}
[> Equations(CP[1]);
\end{verbatim}
The result of this line is:
\[
\{u_{x, x}-u_{x, y}, u_{y, y}-v_{x, y}, v_{x, x}, v_{y, y}-v\}.
\]
\item[Step 3.] Finally, we find the formal power series solutions
corresponding to the functions $u,v$ around the point $(0,0)$, up to
the order $3$:
\begin{verbatim}
PowerSeriesSolution(CP, order = 3);
\end{verbatim}
and as the results it returns:
\begin{eqnarray*}
u(x, y) &=& u(0, 0)+u_y(0, 0)y+u_x(0, 0)x+1/2v_{x,y}(0, 0)y^2+\\
&& +u_{x,y}(0, 0)xy+1/2u_{x,y}(0, 0)x^2+1/6u_x(0, 0)y^3, \\
v(x, y) &=& v(0, 0)+v_y(0, 0)y+v_x(0, 0)x+1/2v(0, 0)y^2+\\
&&+v_{x,y}(0, 0)xy+1/6v_y(0, 0)y^3+1/2v_x(0, 0)xy^2,
\end{eqnarray*}
\end{itemize}
which is in fact the solution set of the above {\sc pde} system
$\Sigma$.
\end{Example}

\begin{Remark}
It is worth noting that, if a {\sc pde} system contains some complex
conjugates of unknown functions, then we can use only the {\sc Maple}
package  {\tt DifferentialAlgebra} and without implementing {\sc LRG}
algorithm to analyze it. For this purpose, we associate to each
unknown function a {\em tag variable} as its complex conjugate, and
we add it in the definition of the base differential ring.
Furthermore, we add the complex conjugate of each input differential
polynomial to $\Sigma$. Then, the result of this approach is the same
as the output of {\sc LRG} algorithm. However, the complexity of this
computation is growing (and higher than {\sc LRG} algorithm) when we add
a new tag variable.
\end{Remark}

\section{Infinitesimal Lie algebra of
\\
real analytic CR-generic submanifolds}
\label{General description}

\subsection{Effective tangency equations}
Hereafter, by $M\subset\mathbb C^{n+k}$ we mean a
real analytic CR-generic submanifold of
CR-dimension $n$ and of codimension $k$; recall
(\cite{Baouendi, MerkerPorten}) that
{\sl CR-genericity}
means that $TM + JTM = T\C^{ n+k} \big\vert_M$, where
$J$ is the standard complex structure (multiplication by $i$).
In order to
compute $\mathfrak{aut}_{CR} (M)$ for an explicitly given such
submanifold $M \subset \mathbb C^{n+k}$, it is most convenient to
work with {\sl complex defining equations} of the specific shape
(\cite{ MerkerPorten, AMS, 5-cubic}):
\[
\overline{w}_j+w_j = \overline{\Xi}_j(\overline{z},z,w) \ \ \ \ \ \ \
\ \ \ \ \ \ {\scriptstyle{(j\,=\,1\,\cdots\,k)},}
\]
where the coordinates $(z, w) = (z_1, \dots, z_n, w_1, \dots, w_k)$
are centered at the origin $0 \in M$ and where
$T_0 M = \{ \overline{ w}_j + w_j = 0\colon\, j = 1, \dots, k\}$,
so that all $\overline{ \Xi}_j = {\rm O} (2)$.

A general $(1,0)$ vector field having holomorphic coefficients:
\[
{\sf X} = \sum_{ i=1}^n\, Z^i ( z, w)\, \frac{\partial}{ \partial
z_i} + \sum_{ l=1}^k\, W^l( z, w) \, \frac{\partial}{ \partial w_l}
\]
 is called an {\sl infinitesimal
CR-automorphism} of $M$, whenever it is {\em tangent} to it, namely
whenever $(\sf X+\overline{\sf X})|_M\equiv 0$. Concretely and more
precisely, the condition that a holomorphic vector field $\sf X$
belongs to $\mathfrak{aut}_{CR} ( M)$ means that for $j=1,\ldots,k,$
the differentiated equation:
\[
\footnotesize \aligned 0 = (\overline{\sf X}+{\sf X}) & \big[
\overline{w}_j+w_j - \overline{\Xi}_j(\overline{z},z,w) \big] =
\\
& = \overline{\sf X}\,\big[\overline{w}_j+w_j -
\overline{\Xi}_j(\overline{z},z,w) \big] + {\sf X}\,
\big[\overline{w}_j+w_j - \overline{\Xi}_j(\overline{z},z,w) \big]
\\
& = \overline{W}^j(\overline{z},\overline{w}) -
\sum_{i=1}^n\,\overline{Z}^i(\overline{z},\overline{w})\,
\frac{\partial\overline{\Xi}_j}{\partial \overline{z}_i}
(\overline{z},z,w) +
\\
& \ \ \ \ \ + W^j(z,w) - \sum_{i=1}^n\, Z^i(z,w)\,
\frac{\partial\overline{\Xi}_j}{\partial z_i}(\overline{z},z,w) -
\sum_{l=1}^k\,W^l(z,w)\, \frac{\partial\overline{\Xi}_j}{\partial
w_l}(\overline{z},z,w)
\\
\endaligned
\]
should vanish for every $(z, w) \in M$. On the other hand, this
condition holds if and only if, after extrinsic complexification and
replacement of $\overline{ w}$ by $- w + \overline{ \Xi} ( \overline{
z}, z, w)$, the $k$ power series obtained in $\mathbb C \{ \overline{
z}, z, w \}$ vanish identically, and this yields the
{\sl tangency equations}:
\begin{equation}
\label{initial-tangency-general} \footnotesize \aligned 0 & \equiv
\overline{W}^j\big(\overline{z},\,-w
+\overline{\Xi}(\overline{z},z,w)\big)
- \sum_{i=1}^n\,\overline{Z}^i\big(
\overline{z},\,-w+\overline{\Xi}(\overline{z},z,w)\big)\,
\frac{\partial\overline{\Xi}_j}{\partial\overline{z}_i}
(\overline{z},z,w) +
\\
& \ \ \ \ \ + W^j(z,w) - \sum_{i=1}^n\,Z^i(z,w)\,
\frac{\partial\overline{\Xi}_j}{\partial z_i} (\overline{z},z,w) -
\sum_{l=1}^k\, W^l(z,w)\, \frac{\partial\overline{\Xi}_j}{\partial w}
(\overline{z},z,w)
\\
& \ \ \ \ \ \ \ \ \ \ \ \ \ \ \ \ \ \ \ \ \ \ \ \ \ \ \ \ \ \ \ \ \ \
\ \ \ \ \ \ \ \ \ \ {\scriptstyle{(j=1,\ldots,k)}}.
\endaligned
\end{equation}

\begin{Proposition}
\label{first step} The holomorphic vector field:
\[
{\sf X} = \sum_{i=1}^n\,Z^i(z,w) \frac{\partial}{\partial z_i} +
\sum_{l=1}^k\,W^l(z,w) \frac{\partial}{\partial w_l}
\]
is an infinitesimal CR-automorphism of a generic real analytic
CR-generic submanifold $M\subset\mathbb
C^{n+k}$ represented in coordinates
$(z,w )= (z_1,\ldots,z_n,w_1,\ldots,w_k)$ as the
graph of the $k$ complex defining functions:
\[
w_j+\overline{w}_j=\overline{\Xi}_j(\overline{z},z,w)  \ \ \ \ \ \ \
{\scriptstyle{(j=1,\ldots,k)}}
\]
 if and only if its coefficients $Z^i(z,w)$ and $W^l(z,w)$ satisfy the
$k$ equations \thetag{\ref{initial-tangency-general}}.
\end{Proposition}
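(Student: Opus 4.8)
The plan is to prove that the list of displayed conditions labelled \thetag{\ref{initial-tangency-general}} is genuinely \emph{equivalent} to the original geometric tangency condition $({\sf X}+\overline{\sf X})|_M \equiv 0$, and not merely a consequence of it. The forward direction is essentially the computation already displayed in the text: one simply applies the derivation $\overline{\sf X} + {\sf X}$ to each defining function $\rho_j := \overline{w}_j + w_j - \overline{\Xi}_j(\overline{z},z,w)$, uses that $\overline{\sf X}$ involves only the barred derivations $\partial/\partial\overline{z}_i, \partial/\partial\overline{w}_l$ while ${\sf X}$ involves only the unbarred ones, and reads off the five groups of terms. This shows that if ${\sf X}\in\mathfrak{aut}_{CR}(M)$ then the expression vanishes for every $(z,w)\in M$.

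First I would make precise the passage from ``vanishes on $M$'' to ``vanishes identically after complexification''. The key point is that $M$, being generic real analytic, has a complexification $\mathcal M \subset \mathbb C^{n+k}\times\mathbb C^{n+k}$ defined by $\overline{w}_j + w_j = \overline{\Xi}_j(\overline{z},z,w)$ in independent variables $(z,w,\overline{z},\overline{w})$, and on $\mathcal M$ one may use the $k$ equations to solve for $\overline{w} = -w + \overline{\Xi}(\overline{z},z,w)$ (this is legitimate because all $\overline{\Xi}_j = \mathrm{O}(2)$, so $\partial\overline{\Xi}_j/\partial\overline{w}_l = \mathrm{O}(1)$ and the implicit function theorem applies near $0$); substituting this into the restricted expression $(\overline{\sf X}+{\sf X})[\rho_j]|_M$ yields a power series in $\mathbb C\{\overline{z},z,w\}$ whose vanishing on $M$ forces it to be the zero series, because $(\overline{z},z,w)$ are free coordinates along $\mathcal M$. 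Performing that substitution term by term in the five groups above produces exactly \thetag{\ref{initial-tangency-general}}, which establishes the forward implication in the sharper ``identically zero'' form.

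For the converse, I would simply reverse this chain: if the $k$ power series identities \thetag{\ref{initial-tangency-general}} hold in $\mathbb C\{\overline{z},z,w\}$, then restricting the free variable $\overline{w}$ back to the complexified locus $\overline{w} = -w + \overline{\Xi}$ and then to the real diagonal $(\overline{z},\overline{w}) = \overline{(z,w)}$ recovers $(\overline{\sf X}+{\sf X})[\rho_j]|_M \equiv 0$ for each $j$; since $\rho_1,\dots,\rho_k$ are (real and imaginary recombinations of) a defining system for $M$ and their differentials are $\mathbb R$-linearly independent along $M$ by CR-genericity, vanishing of $(\overline{\sf X}+{\sf X})[\rho_j]$ on $M$ for all $j$ is exactly the statement that the real vector field $\mathrm{Re}\,{\sf X} = \tfrac12({\sf X}+\overline{\sf X})$ is tangent to $M$, i.e. ${\sf X}\in\mathfrak{aut}_{CR}(M)$.

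The only genuinely delicate step is the equivalence ``vanishing on $M$ $\Longleftrightarrow$ vanishing identically as a complexified power series'': one must check that the substitution $\overline{w}\mapsto -w+\overline{\Xi}(\overline{z},z,w)$ is invertible near the origin and that no information is lost, i.e. that the map $M \to \{(\overline{z},z,w)\}$ is, at the level of germs, a parametrization of (the real points of) the complexification. Everything else — the bilinearity of the derivation $\overline{\sf X}+{\sf X}$, the bookkeeping of which coefficients $Z^i, W^l$ carry barred versus unbarred arguments, and the chain rule applied to $\overline{\Xi}_j(\overline{z},z,w)$ — is routine and already laid out in the displayed computation preceding the Proposition.
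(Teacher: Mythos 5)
Your proposal is correct and follows essentially the same route as the paper, which derives the Proposition in the paragraphs immediately preceding it by applying $\overline{\sf X}+{\sf X}$ to the complex defining functions and then invoking extrinsic complexification with the substitution $\overline{w}\mapsto -w+\overline{\Xi}(\overline{z},z,w)$ so that vanishing on $M$ becomes identical vanishing in $\mathbb C\{\overline{z},z,w\}$. The only added value of your write-up is that you make explicit the equivalence ``vanishes on $M$ $\Leftrightarrow$ vanishes identically after complexification'' and its converse, which the paper simply asserts as standard (the invocation of the implicit function theorem is superfluous here, since $\overline{\Xi}_j$ does not involve $\overline{w}$ and the equations already give $\overline{w}_j$ explicitly).
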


\subsection{General formulae}

According to Proposition \ref{first step}, to find the infinitesimal
CR-automorphisms $\sf X$ associated to a CR-generic submanifold $M$ of
$\mathbb C^{n+k}$, it is necessary and sufficient to determine the
holomorphic functions $Z^i(z,w)$ and $W^l(z,w)$ satisfying the
tangency equations \thetag{\ref{initial-tangency-general}}. Then, the
main question that immediately arises here is to ask: {\em How can
one specify such functions?} To answer this question, we focus our
attention on providing an effective algorithm. To this aim, first we
introduce the expansions of the coefficients of such a sought ${\sf
X}$ with respect to the powers of $z$:
\begin{equation}
\label{Taylor} \aligned Z^i(z,w) = \sum_{\alpha\in\mathbb
N^n}\,z^\alpha\,Z^{i,\alpha}(w) \ \ \ \ \ \text{\rm and} \ \ \ \ \
W^l(z,w) = \sum_{\alpha\in\mathbb N^n}\,z^\alpha\,W^{l,\alpha}(w),
\endaligned
\end{equation}
where the $Z^{i, \alpha}( w)$ and the $W^{ l, \alpha} ( w)$ are local
holomorphic functions. We will show that the identical vanishing of
the $k$ equations~\thetag{ \ref{initial-tangency-general}} in
$\mathbb C \{ \overline{ z}, z, w\}$ is equivalent to a certain (in
general complicated) linear system of partial differential equations
involving the $\frac{
\partial^{ \vert \gamma \vert} Z^{k, \alpha}}{
\partial w^\gamma}(w)$,
the $\frac{ \partial^{ \vert \gamma' \vert} Z^{ k', \alpha'}}{
\partial w^{ \gamma'}}(w)$,
the $\frac{ \partial^{ \vert \gamma'' \vert} W^{ l, \alpha'''}}{
\partial w^{ \gamma''}}(w)$
and the $\frac{ \partial^{ \vert \gamma''' \vert} W^{ l',
\alpha'''}}{
\partial w^{ \gamma'''}}(w)$.

Inserting these expansions with respect to the powers of $z$ in the
tangency equations \thetag{\ref{initial-tangency-general}}, we get:
\[
\footnotesize \aligned 0 & \equiv \sum_{\alpha\in\mathbb
N^n}\,\overline{z}^\alpha\, \overline{W}^{j,\alpha}
\big(-w+\overline{\Xi}\big) - \sum_{i=1}^n\,\sum_{\alpha\in\mathbb
N^n}\, \overline{z}^\alpha\, \overline{Z}^{i,\alpha}
\big(-w+\overline{\Xi}\big)\,
\frac{\partial\overline{\Xi}_j}{\partial\overline{z}_i}
(\overline{z},z,w) +
\\
& \ \ \ \ \ + \sum_{\beta\in\mathbb N^n}\, z^\beta\,W^{j,\beta}(w) -
\sum_{i=1}^n\,\sum_{\beta\in\mathbb N^n}\, z^\beta\,Z^{i,\beta}(w)\,
\frac{\partial\overline{\Xi}_j}{\partial z_i} (\overline{z},z,w) -
\sum_{l=1}^k\,\sum_{\beta\in\mathbb N^n}\, z^\beta\,W^{l,\beta}(w)\,
\frac{\partial\overline{\Xi}_j}{\partial w_l} (\overline{z},z,w)
\\
& \ \ \ \ \ \ \ \ \ \ \ \ \ \ \ \ \ \ \ \ \ \ \ \ \ \ \ \ \ \ \ \ \ \
\ \ \ \ \ \ \ \ \ \ \ \ \ \ \ \ \ \ \ \ \ \ \ \ \ \
{\scriptstyle{(j\,=\,1\,\cdots\,k)}}.
\endaligned
\]
Since in these equations, $w$ is the argument both of all the $Z^{i,
\beta}$ and of all the $W^{ l, \beta}$ appearing in the second line,
one should arrange that the same argument $w$ takes place inside the
functions $\overline{ W}^{ j, \alpha}$ and $\overline{ Z}^{i,
\alpha}$ appearing in the first line. Thus, one is led, for an
arbitrary converging holomorphic power series $\overline{ A} =
\overline{ A} ( w) = \sum_{ \gamma\in \mathbb N^k}\, \frac{
\partial^{ \vert \gamma \vert} \overline{ A}}{
\partial w^\gamma} ( 0)\, w^\gamma$, to apply
the well known basic infinite Taylor series formulae under the
following slightly artificial form:
\begin{equation}
\label{Taylor-A} \small \aligned
\overline{A}\big(-w+\overline{\Xi}\big) & =
\overline{A}\big(w+(-2w+\overline{\Xi})\big)
\\
& = \sum_{\gamma\in\mathbb N^k}\,
\frac{\partial^{\vert\gamma\vert}\overline{A}}{
\partial w^\gamma}(w)\,
\frac{1}{\gamma!}\,
\big(-2w+\overline{\Xi}(\overline{z},z,w)\big)^\gamma.
\endaligned
\end{equation}
When one does this, one transforms the first lines of the previous
$k$ equations as follows:
\begin{equation}
\label{before-expansion-z-bar-z} \footnotesize \aligned 0 & \equiv
\sum_{\alpha\in\mathbb N^n}\, \sum_{\gamma\in\mathbb N^k}\,
\frac{1}{\gamma!}\, \overline{z}^\alpha\,
\big(-2w+\overline{\Xi}(\overline{z},z,w)\big)^\gamma\,
\frac{\partial^{\vert\gamma\vert}\overline{W}^{j,\alpha}}{
\partial w^\gamma}(w)
-
\\
& \ \ \ \ \ - \sum_{i=1}^n\,\sum_{\alpha\in\mathbb
N^n}\,\sum_{\gamma\in\mathbb N^k}\, \frac{1}{\gamma!}\,
\overline{z}^\alpha\,
\big(-2w+\overline{\Xi}(\overline{z},z,w)\big)^\gamma\,
\frac{\partial^{\vert\gamma\vert}\overline{Z}^{i,\alpha}}{
\partial w^\gamma}(w)
+
\\
& \ \ \ \ \ + \sum_{\beta\in\mathbb N^n}\, z^\beta\,W^{j,\beta}(w) -
\sum_{i=1}^n\,\sum_{\beta\in\mathbb N^n}\, z^\beta\,Z^{i,\beta}(w)\,
\frac{\partial\overline{\Xi}_j}{\partial z_i} (\overline{z},z,w) -
\sum_{l=1}^k\,\sum_{\beta\in\mathbb N^n}\, z^\beta\,W^{l,\beta}(w)\,
\frac{\partial\overline{\Xi}_j}{\partial w_l} (\overline{z},z,w)
\\
& \ \ \ \ \ \ \ \ \ \ \ \ \ \ \ \ \ \ \ \ \ \ \ \ \ \ \ \ \ \ \ \ \ \
\ \ \ \ \ \ \ \ \ \ \ \ \ \ \ \ \ \
{\scriptstyle{(j\,=\,1\,\cdots\,k)}}.
\endaligned
\end{equation}
But still, we must expand and reorganize everything in terms of the
powers $\overline{ z}^\alpha\, z^\beta$ of $(\overline{ z}, z)$. At
first, we must do this for the multipowers:
\[
\big(-2w + \overline{
\Xi} ( \overline{ z}, z, w)\big)^\gamma
=
\prod_{j=1}^k\,
\Big(
-2w_j
+
\overline{\Xi}_j\big(\overline{z},z,w\big)
\Big)^{\gamma_j}.
\]

\subsection{Expansion, reorganization and
associated ${\sf LinCons}$ {\sc pde} system}
To begin with, let us denote the
$(\overline{ z}, z)$-power series expansion of $-2w_j + \overline{
\Xi}_j$ by:
\begin{equation}
\label{Xi-expansion}
 -2w_j + \overline{\Xi}_j(\overline{z},z,w) =
\sum_{\alpha\in\mathbb N^n}\,\sum_{\beta\in\mathbb N^n}\,
\overline{z}^\alpha\,z^\beta\,
\overline{\Xi}_{j,\alpha,\beta}^\sim(w) \ \ \ \ \ \ \ \ \ \ \ \ \
{\scriptstyle{(j\,=\,1\,\cdots\,k)}},
\end{equation}
with the understanding that the coefficients of the expansion of
$\overline{ \Xi}_j$ would be denoted plainly $\overline{ \Xi}_{ j,
\alpha, \beta} ( w)$, without $\sim$ sign.
Reminding $\overline{\Xi}_j = {\rm O}(2)$, we adopt the convention
that in this right-hand side, the $\overline{ \Xi}_{ j,\alpha,
\beta}^\sim ( w)$ for $\alpha = \beta = 0$ comes not from $\overline{
\Xi}_j$ itself, but just from the supplementary first-order term $-2\,
w_j$.

Thus, denoting:
\[
\gamma = (\gamma_1,\gamma_2,\dots,\gamma_k) \in \mathbb N^k,
\]
we may expand explicitly the exponentiated product under
consideration, and the intermediate, detailed computations read as
follows:
\[
\footnotesize \aligned & \prod_{j=1}^k\, \big(-2\,w_j +
\overline{\Xi}_j(\overline{z},z,w)\big)^{\gamma_j} =
\\
& = \prod_{j=1}^k\, \bigg( \sum_{\alpha\in\mathbb
N^n}\,\sum_{\beta\in\mathbb N^n}\, \overline{z}^\alpha\,z^\beta\,
\overline{\Xi}_{j,\alpha,\beta}^\sim(w) \bigg)^{\gamma_j}
\\
& = \prod_{j=1}^k\, \bigg[ \sum_{\alpha\in\mathbb
N^n}\,\sum_{\beta\in\mathbb N^n}\, \overline{z}^\alpha\,z^\beta\,
\bigg( \sum_{\alpha_1+\cdots+\alpha_{\gamma_j}=\alpha \atop
\beta_1+\cdots+\beta_{\gamma_j}=\beta}\,
\overline{\Xi}_{j,\alpha_1,\beta_1}^\sim(w) \cdots\,
\overline{\Xi}_{j,\alpha_{\gamma_j},\beta_{\gamma_j}}^\sim(w) \bigg)
\bigg]
\\
 & = \sum_{\alpha\in\mathbb N^n}\,\sum_{\beta\in\mathbb
N^n}\, \overline{z}^\alpha\,z^\beta \bigg[
\sum_{\alpha^1+\cdots+\alpha^k=\alpha \atop
\beta_1+\cdots+\beta^k=\beta}\,
\sum_{\alpha_1^1+\cdots+\alpha_{\gamma_1}^1=\alpha^1 \atop
\beta_1^1+\cdots+\beta_{\gamma_1}^1=\beta^1}\, \cdots\,
\sum_{\alpha_1^k+\cdots+\alpha_{\gamma_k}^k=\alpha^k \atop
\beta_1^k+\cdots+\beta_{\gamma_k}^k=\beta_k}
\\
& \ \ \ \ \ \ \ \ \ \ \ \ \ \ \ \ \ \ \ \ \ \ \ \ \ \ \ \ \ \ \ \ \ \
\ \overline{\Xi}_{1,\alpha_1^1,\beta_1^1}^\sim(w) \cdots
\overline{\Xi}_{1,\alpha_{\gamma_1}^1,\beta_{\gamma_1}^1}^\sim(w)
\cdots\cdots\, \overline{\Xi}_{k,\alpha_1^k,\beta_1^k}^\sim(w) \cdots
\overline{\Xi}_{k,\alpha_{\gamma_k}^k,\beta_{\gamma_k}^k}^\sim(w)
\bigg]
\\
& =: \sum_{\alpha\in\mathbb N^n}\,\sum_{\beta\in\mathbb N^n}\,
\overline{z}^\alpha\,z^\beta\, \mathcal{A}_{\alpha,\beta,\gamma}
\Big( \big\{
\overline{\Xi}_{\widehat{j},\widehat{\alpha},\widehat{\beta}}^\sim(w)
\big\}_{\widehat{j}\in\mathbb N,\widehat{\alpha}\in\mathbb
N^n,\widehat{\beta}\in\mathbb N^n} \Big),
\endaligned
\]
where we introduce a collection of certain polynomial functions
$\mathcal{ A}_{ \alpha, \beta, \gamma}$ of all the $\overline{ \Xi}_{
\widehat{ j}, \widehat{ \alpha}, \widehat{ \beta}}^\sim ( w)$ that
appear naturally in the large brackets of the penultimate equality,
namely where we set:
\[
\small \aligned \mathcal{A}_{\alpha,\beta,\gamma} \Big( \big\{
\overline{\Xi}_{\widehat{j},\widehat{\alpha},\widehat{\beta}}^\sim(w)
& \big\}_{\widehat{j}\in\mathbb N,\widehat{\alpha}\in\mathbb
N^n,\widehat{\beta}\in\mathbb N^n} \Big) :=
\sum_{\alpha^1+\cdots+\alpha^k=\alpha \atop
\beta_1+\cdots+\beta^k=\beta}\,
\sum_{\alpha_1^1+\cdots+\alpha_{\gamma_1}^1=\alpha^1 \atop
\beta_1^1+\cdots+\beta_{\gamma_1}^1=\beta^1}\, \cdots\,
\sum_{\alpha_1^k+\cdots+\alpha_{\gamma_k}^k=\alpha^k \atop
\beta_1^k+\cdots+\beta_{\gamma_k}^k=\beta_k}
\\
& \overline{\Xi}_{1,\alpha_1^1,\beta_1^1}^\sim(w) \cdots
\overline{\Xi}_{1,\alpha_{\gamma_1}^1,\beta_{\gamma_1}^1}^\sim(w)
\cdots\cdots\, \overline{\Xi}_{k,\alpha_1^k,\beta_1^k}^\sim(w) \cdots
\overline{\Xi}_{k,\alpha_{\gamma_k}^k,\beta_{\gamma_k}^k}^\sim(w).
\endaligned
\]
At present, coming back to the $k$ equations~\thetag{
\ref{before-expansion-z-bar-z}} we left momentarily untouched, we see
that in them, five sums are extant and we now want to expand and to
reorganize properly each one of these sums as a $(\overline{ z},
z)$-power series of the form:
\[
\sum_{ \alpha \in \mathbb N^n} \, \sum_{ \beta \in
\mathbb N^n}\, \overline{ z}^\alpha \, z^\beta \big(
\text{\footnotesize\sf coeff}_{j,\alpha,\beta} \big).
\]
For the sum in~\thetag{ \ref{before-expansion-z-bar-z}}, we therefore
compute, changing in advance the index $\alpha$ to $\alpha'$:
\begin{equation}
\label{I} \footnotesize \aligned & \sum_{\alpha'\in\mathbb
N^n}\,\sum_{\gamma\in\mathbb N^k}\,
\frac{1}{\gamma!}\,\overline{z}^{\alpha'}\,
\frac{\partial^{\vert\gamma\vert}\overline{W}^{j,\alpha'}}{
\partial w^\gamma}(w)
\big(-2w+\overline{\Xi}(\overline{z},z,w)\big)^\gamma\, =
\\
& = \sum_{\alpha'\in\mathbb N^n}\,\sum_{\beta\in\mathbb N^n}\,
\frac{1}{\gamma!}\,\overline{z}^{\alpha'}\,
\frac{\partial^{\vert\gamma\vert} \overline{W}^{j,\alpha'}}{\partial
w^\gamma}(w) \sum_{\alpha''\in\mathbb N^n}\,\sum_{\beta\in\mathbb
N^n}\, \overline{z}^{\alpha''}\,z^\beta\,
\mathcal{A}_{\alpha'',\beta,\gamma} \Big(\big\{
\overline{\Xi}_{\widehat{j},\widehat{\alpha},\widehat{\beta}}^\sim(w)
\big\}\Big)
\\
& = \sum_{\alpha\in\mathbb N^n}\,\sum_{\beta\in\mathbb N^n}\,
\overline{z}^\alpha\,z^\beta \bigg[ \sum_{\gamma\in\mathbb N^k}\,
\sum_{\alpha=\alpha'+\alpha''}\, \frac{1}{\gamma!}\,
\mathcal{A}_{\alpha'',\beta,\gamma} \Big(\big\{
\overline{\Xi}_{\widehat{j},\widehat{\alpha},\widehat{\beta}}^\sim(w)
\big\}\Big) \cdot
\frac{\partial^{\vert\gamma\vert}\overline{W}^{j,\alpha'}}{
\partial w^\gamma}(w)
\bigg].
\endaligned
\end{equation}
The computations for the second sum in~\thetag{
\ref{before-expansion-z-bar-z}} are the same:
\begin{equation}
\label{II} \footnotesize \aligned & \ \ \ \ \
-\sum_{i=1}^n\,\sum_{\alpha'\in\mathbb N^n}\,\sum_{\gamma\in\mathbb
N^k}\, \frac{1}{\gamma!}\,\overline{z}^{\alpha'}\,
\frac{\partial^{\vert\gamma\vert}\overline{Z}^{i,\alpha'}}{
\partial w^\gamma}(w)
\big(-2w+\overline{\Xi}(\overline{z},z,w)\big)^\gamma\,
\\
& = -\sum_{i=1}^n\,\sum_{\alpha'\in\mathbb
N^n}\,\sum_{\gamma\in\mathbb N^k}\,
\frac{1}{\gamma!}\,\overline{z}^{\alpha'}\,
\frac{\partial^{\vert\gamma\vert}\overline{Z}^{i,\alpha'}}{
\partial w^\gamma}(w)\,
\sum_{\alpha''\in\mathbb N^n}\,\sum_{\beta\in\mathbb N^n}\,
\overline{z}^{\alpha''}\,z^\beta\,
\mathcal{A}_{\alpha'',\beta,\gamma} \Big(\big\{
\overline{\Xi}_{\widehat{j},\widehat{\alpha},\widehat{\beta}}^\sim(w)
\big\}\Big)
\\
& = \sum_{\alpha\in\mathbb N^n}\,\sum_{\beta\in\mathbb N^n}\,
\overline{z}^\alpha\,z^\beta \bigg[
-\sum_{i=1}^n\,\sum_{\gamma\in\mathbb N^k}\,
\sum_{\alpha'+\alpha''=\alpha}\, \frac{1}{\gamma!}\,
\mathcal{A}_{\alpha'',\beta,\gamma} \Big(\big\{
\overline{\Xi}_{\widehat{j},\widehat{\alpha},\widehat{\beta}}^\sim(w)
\big\}\Big) \cdot
\frac{\partial^{\vert\gamma\vert}\overline{Z}^{i,\alpha'}}{
\partial w^\gamma}(w)
\bigg].
\endaligned
\end{equation}
The third sum in~\thetag{ \ref{before-expansion-z-bar-z}} is already
almost well written, for we indeed have, if we denote by ${\bf 0} =
(0, \dots, 0) \in \mathbb N^n$ the zero-multiindex:
\begin{equation}
\label{III} \small \aligned \sum_{\beta\in\mathbb N^n}\,
z^\beta\,W^{j,\beta}(w) = \sum_{\alpha\in\mathbb
N^n}\,\sum_{\beta\in\mathbb N^n}\, \overline{z}^\alpha\,z^\beta\,
\big[ \delta_\alpha^{\bf 0} \cdot W^{j,\beta}(w) \big],
\endaligned
\end{equation}
where $\delta_{\sf a}^{\sf b} = 0$ if ${\sf a} \neq {\sf b}$ and
equals $1$
if ${\sf a} = {\sf b}$. To transform the fourth sum in~\thetag{
\ref{before-expansion-z-bar-z}}, we must at first compute, for each
$i = 1, \dots, n$ (and for each $j = 1, \dots, k$), the first-order
partial derivatives $\frac{ \partial \overline{ \Xi}_j}{ \partial
z_i}$ from~\thetag{\ref{Xi-expansion}}, which gives, if we denote simply by ${\bf 1}_i$ the multiindex
$(0, \dots, 1, \dots, 0)$ of $\mathbb N^n$ with $1$ at the $i$-th
place and zero elsewhere:
\[
\small \aligned \frac{\partial\overline{\Xi}_j}{\partial z_i}
(\overline{z},z,w) & = \sum_{\alpha\in\mathbb
N^n}\,\sum_{\beta\in\mathbb N^n\atop \beta_i\geqslant 1}\,
\overline{z}^\alpha\,\beta_i\,z^{\beta-{\bf 1}_i}\,
\overline{\Xi}_{j,\alpha,\beta}^\sim(w)
\\
& = \sum_{\alpha\in\mathbb N^n}\,\sum_{\beta\in\mathbb N^n}\,
\overline{z}^\alpha\,z^\beta\, (\beta_i+1)\,
\overline{\Xi}_{j,\alpha,\beta+{\bf 1}_i}^\sim(w).
\endaligned
\]
Thanks to this, the fourth sum in~\thetag{
\ref{before-expansion-z-bar-z}} may be reorganized as wanted:
\begin{equation}
\label{IV} \footnotesize \aligned &
-\sum_{i=1}^n\,\sum_{\beta'\in\mathbb N^n}\,
z^{\beta'}\,Z^{i,\beta'}(w)\,
\frac{\partial\overline{\Xi}_j}{\partial z_i} (\overline{z},z,w) =
\\
& = -\sum_{i=1}^n\,\sum_{\beta'\in\mathbb N^n}\,
z^{\beta'}\,Z^{i,\beta'}(w)\, \sum_{\alpha\in\mathbb
N^n}\,\sum_{\beta''\in\mathbb N^n}\,
\overline{z}^\alpha\,z^{\beta''}\, (1+\beta_i'')\,
\overline{\Xi}_{j,\alpha,\beta''+{\bf 1}_i}^\sim(w)
\\
& = \sum_{\alpha\in\mathbb N^n}\,\sum_{\beta\in\mathbb N^n}\,
\overline{z}^\alpha\,z^\beta \bigg[ -\sum_{i=1}^n\,
\sum_{\beta'+\beta''=\beta}\, (\beta_i''+1)\,
\overline{\Xi}_{j,\alpha,\beta''+{\bf 1}_i}^\sim(w) \cdot
Z^{i,\beta'}(w) \bigg].
\endaligned
\end{equation}
Lastly, in order to transform the fifth sum in~\thetag{
\ref{before-expansion-z-bar-z}}, we must at first compute, for each
$l = 1, \dots, k$ (and for each $j = 1, \dots, k$), the first-order
partial derivatives $\frac{ \partial \overline{ \Xi}_j}{ \partial
w_l}$, and to this aim, we start by rewriting
from~\thetag{\ref{Xi-expansion}}:
\[
\overline{\Xi}_j(\overline{z},z,w) = 2w_j + \sum_{\alpha\in\mathbb
N^n}\,\sum_{\beta\in\mathbb N^n}\, \overline{z}^\alpha\,z^\beta\,
\overline{\Xi}_{j,\alpha,\beta}^\sim(w),
\]
whence it immediately follows:
\[
\aligned \frac{\partial\overline{\Xi}_j}{\partial w_l}
(\overline{z},z,w) = 2\,\delta_j^l + \sum_{\alpha\in\mathbb
N^n}\,\sum_{\beta\in\mathbb N^n}\, \overline{z}^\alpha\,z^\beta\,
\frac{\partial\overline{\Xi}_{j,\alpha,\beta}^\sim(w)}{
\partial w_l}.
\endaligned
\]
Thanks to this, the fifth sum in~\thetag{
\ref{before-expansion-z-bar-z}}, too, may be reorganized
appropriately:
\begin{equation}
\label{V} \footnotesize \aligned &
-\sum_{l=1}^k\,\sum_{\beta'\in\mathbb N^n}\,
z^{\beta'}\,W^{l,\beta'}(w)\,
\frac{\partial\overline{\Xi}_j}{\partial w_l} (\overline{z},z,w) =
\\
& = -\sum_{l=1}^k\,\sum_{\beta'\in\mathbb N^n}\,
z^{\beta'}\,W^{l,\beta'}(w)\, \bigg[ 2\,\delta_j^l +
\sum_{\alpha\in\mathbb N^n}\,\sum_{\beta''\in\mathbb N^n}\,
\overline{z}^\alpha\,z^{\beta''}\,
\frac{
\partial\overline{\Xi}_{j,\alpha,\beta''}^\sim(w)
}{
\partial w_l}
\bigg]
\\
& = \sum_{\alpha\in\mathbb N^n}\,\sum_{\beta\in\mathbb N^n}\,
\overline{z}^\alpha\,z^\beta\, \bigg[ -2\,\delta_\alpha^{\bf 0} \cdot
W^{j,\beta}(w) - \sum_{l=1}^k\, \sum_{\beta'+\beta''=\beta}\,
\frac{
\partial\overline{\Xi}_{j,\alpha,\beta''}^\sim(w)
}{
\partial w_l}\,
W^{l,\beta'}(w) \bigg].
\endaligned
\end{equation}
Summing up these five reorganized sums appearing in~\thetag{
\ref{before-expansion-z-bar-z}} as a double sum $\sum_{ \alpha} \,
\sum_\beta\, \overline{ z}^\alpha\, z^\beta \, \big( {\sf coeff}_{ j,
\alpha, \beta} \big)$, and equating to zero all the obtained
coefficients~\thetag{ \ref{I}}, \thetag{ \ref{II}}, \thetag{
\ref{III}}, \thetag{ \ref{IV}} and~\thetag{ \ref{V}}, we deduce the
following fundamental statement.

\begin{Theorem}
\label{Theorem} Let $M$ be a local generic real analytic
CR-generic submanifold of
$\mathbb C^{ n+k}$ having positive codimension $k \geqslant 1$ and
positive CR dimension $n \geqslant 1$ which is represented, in local
holomorphic coordinates $(z, w) = ( z_1, \dots, z_n, w_1, \dots,
w_k)$ centered at the origin $0 \in M$
by $k$ complex defining equations of the shape:
\[
\overline{w}_j+w_j = \overline{\Xi}_j(\overline{z},z,w) \ \ \ \ \ \ \
\ \ \ \ \ \ {\scriptstyle{(j\,=\,1\,\cdots\,k)}},
\]
with $\overline{\Xi}_j = {\rm O} ( 2)$,
and introduce the power series expansion with respect
to the variables $( \overline{ z}, z)$:
\[
-2w_j + \overline{\Xi}_j(\overline{z},z,w) =: \sum_{\alpha\in\mathbb
N^n}\,\sum_{\beta\in\mathbb N^n}\, \overline{z}^\alpha\,z^\beta\,
\overline{\Xi}_{j,\alpha,\beta}^\sim(w) \ \ \ \ \ \ \ \ \ \ \ \ \
{\scriptstyle{(j\,=\,1\,\cdots\,k)}}.
\]
For every multiindex $\alpha \in \mathbb N^n$, every multiindex
$\beta \in \mathbb N^n$ and every multiindex $\gamma \in \mathbb
N^k$, introduce also the explicit universal polynomial:
\[
\small \aligned \mathcal{A}_{\alpha,\beta,\gamma} \Big( \big\{
\overline{\Xi}_{\widehat{j},\widehat{\alpha},\widehat{\beta}}^\sim(w)
& \big\}_{\widehat{j}\in\mathbb N,\widehat{\alpha}\in\mathbb
N^n,\widehat{\beta}\in\mathbb N^n} \Big) :=
\sum_{\alpha^1+\cdots+\alpha^k=\alpha \atop
\beta_1+\cdots+\beta^k=\beta}\,
\sum_{\alpha_1^1+\cdots+\alpha_{\gamma_1}^1=\alpha^1 \atop
\beta_1^1+\cdots+\beta_{\gamma_1}^1=\beta^1}\, \cdots\,
\sum_{\alpha_1^k+\cdots+\alpha_{\gamma_k}^k=\alpha^k \atop
\beta_1^k+\cdots+\beta_{\gamma_k}^k=\beta_k}
\\
& \overline{\Xi}_{1,\alpha_1^1,\beta_1^1}^\sim(w) \cdots
\overline{\Xi}_{1,\alpha_{\gamma_1}^1,\beta_{\gamma_1}^1}^\sim(w)
\cdots\cdots\, \overline{\Xi}_{k,\alpha_1^k,\beta_1^k}^\sim(w) \cdots
\overline{\Xi}_{k,\alpha_{\gamma_k}^k,\beta_{\gamma_k}^k}^\sim(w).
\endaligned
\]
Then a general holomorphic vector field:
\[
{\sf X} = \sum_{i=1}^n\,Z^i(z,w)\, \frac{\partial}{\partial z_i} +
\sum_{l=1}^k\,W^l(z,w)\, \frac{\partial}{\partial w_l}
\]
is an infinitesimal CR-automorphism of $M$ belonging to $\mathfrak{
aut}_{CR} ( M)$, namely it has the property that $\overline{\sf X} +
{\sf X}$ is tangent to $M$ {\em if and only if}, for every $j = 1,
\dots, k$, for every $\alpha \in \mathbb N^n$ and for every $\beta
\in \mathbb N^n$, the following linear holomorphic partial
differential equation:
\begin{equation}
\label{main-formulae} \boxed{ \footnotesize \aligned 0 & \equiv
\sum_{\gamma\in\mathbb N^k}\, \sum_{\alpha=\alpha'+\alpha''}\,
\frac{1}{\gamma!}\, \mathcal{A}_{\alpha'',\beta,\gamma} \Big( \big\{
\overline{\Xi}_{\widehat{j},\widehat{\alpha},\widehat{\beta}}^\sim(w)
\big\}_{\widehat{j}\in\mathbb N,\widehat{\alpha}\in\mathbb
N^n,\widehat{\beta}\in\mathbb N^n} \Big) \cdot
\frac{\partial^{\vert\gamma\vert}\overline{W}^{j,\alpha'}}{
\partial w^\gamma}(w)
-
\\
& -\sum_{i=1}^n\,\sum_{\gamma\in\mathbb N^k}\,
\sum_{\alpha'+\alpha''=\alpha}\, \frac{1}{\gamma!}\,
\mathcal{A}_{\alpha'',\beta,\gamma} \Big(\big\{
\overline{\Xi}_{\widehat{j},\widehat{\alpha},\widehat{\beta}}^\sim(w)
\big\}\Big) \cdot
\frac{\partial^{\vert\gamma\vert}\overline{Z}^{i,\alpha'}}{
\partial w^\gamma}(w)
+
\\
& + \delta_\alpha^{\bf 0} \cdot W^{j,\beta}(w) -
\\
& -\sum_{i=1}^n\, \sum_{\beta'+\beta''=\beta}\, (\beta_i''+1)\,
\overline{\Xi}_{j,\alpha,\beta''+{\bf 1}_i}^\sim(w) \cdot
Z^{i,\beta'}(w) -
\\
& -2\,\delta_\alpha^{\bf 0} \cdot W^{j,\beta}(w) - \sum_{l=1}^k\,
\sum_{\beta'+\beta''=\beta}\,
\frac{\partial\overline{\Xi}_{j,\alpha,\beta''}^\sim(w)
}{
\partial w_l}\,
W^{l,\beta'}(w)
\endaligned}
\end{equation}
which is {\em linear} with respect to the partial derivatives:
\[
\footnotesize \aligned
\frac{\partial^{\vert\gamma\vert}Z^{i,\alpha}}{
\partial w^\gamma}(w),
\ \ \ \ \ \ \frac{\partial^{\vert\gamma'\vert}\overline{Z}^{i',\alpha'}}{
\partial w^{\gamma'}}(w),
\ \ \ \ \ \ \frac{\partial^{\vert\gamma''\vert}W^{l,\alpha''}}{
\partial w^{\gamma''}}(w),
\ \ \ \ \ \ \frac{\partial^{\vert\gamma'''\vert}\overline{W}^{l',
\alpha'''}}{
\partial w^{\gamma'''}}(w),
\endaligned
\]
together with its conjugate, are
satisfied identically in $\mathbb C\{ w\}$ by the four families of
functions:
\[
Z^{i,\alpha}(w), \ \ \ \ \ \ \overline{Z}^{i',\alpha'}(w), \ \ \ \ \ \
W^{l,\alpha''}(w), \ \ \ \ \ \ \overline{W}^{l',\alpha'''}(w).
\]
depending only upon the $k$ holomorphic variables $(w_1, \dots,
w_k)$.
\end{Theorem}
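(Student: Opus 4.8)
The statement is a reorganization result: it collects under one roof the purely formal expansions carried out above and reads off the final coefficient identity. The plan is therefore to start from the tangency characterization of Proposition~\ref{first step}, substitute the $z$-power series of the coefficient functions, re-expand everything as a double power series in $(\overline{z},z)$ whose coefficients are holomorphic functions of $w$, and then apply the identity principle for convergent power series.

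Concretely, I would first invoke Proposition~\ref{first step}: the holomorphic field ${\sf X}$ lies in $\mathfrak{aut}_{CR}(M)$ if and only if, after extrinsic complexification and the substitution $\overline{w}\mapsto -w+\overline{\Xi}(\overline{z},z,w)$, the $k$ power series in~\eqref{initial-tangency-general} vanish identically in $\C\{\overline{z},z,w\}$. Into these equations I insert the Taylor expansions~\eqref{Taylor} of the $Z^i$ and the $W^l$ in powers of $z$, together with the conjugate expansions in powers of $\overline{z}$. Since $w$ must become the common argument of all the coefficient functions, I use the auxiliary Taylor identity~\eqref{Taylor-A} to rewrite each anti-holomorphic block $\overline{W}^{j,\alpha}(-w+\overline{\Xi})$ and $\overline{Z}^{i,\alpha}(-w+\overline{\Xi})$ as a $\gamma$-indexed series in the derivatives $\partial^{\vert\gamma\vert}\overline{W}^{j,\alpha}/\partial w^\gamma(w)$, $\partial^{\vert\gamma\vert}\overline{Z}^{i,\alpha}/\partial w^\gamma(w)$ weighted by the multipowers $\big(-2w+\overline{\Xi}\big)^\gamma$; this is exactly the passage from~\eqref{initial-tangency-general} to~\eqref{before-expansion-z-bar-z}.

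The combinatorial heart of the argument is the explicit $(\overline{z},z)$-expansion of the product $\big(-2w+\overline{\Xi}(\overline{z},z,w)\big)^\gamma=\prod_{j=1}^k\big(-2w_j+\overline{\Xi}_j\big)^{\gamma_j}$: substituting the expansion~\eqref{Xi-expansion} of each factor and collecting the coefficient of $\overline{z}^\alpha z^\beta$ produces, by definition, the universal polynomial $\mathcal{A}_{\alpha,\beta,\gamma}$ in the $\overline{\Xi}_{\widehat{j},\widehat{\alpha},\widehat{\beta}}^\sim(w)$. Armed with this, I reorganize in turn each of the five sums appearing in~\eqref{before-expansion-z-bar-z} into the normal form $\sum_\alpha\sum_\beta \overline{z}^\alpha z^\beta(\,\cdot\,)$, obtaining respectively the contributions~\eqref{I},~\eqref{II},~\eqref{III},~\eqref{IV} and~\eqref{V}; for the fourth and the fifth sums one first needs the elementary formulas for $\partial\overline{\Xi}_j/\partial z_i$ and $\partial\overline{\Xi}_j/\partial w_l$ read directly off~\eqref{Xi-expansion}, which produce the index shift $\beta''+{\bf 1}_i$ and the $w$-derivative $\partial\overline{\Xi}_{j,\alpha,\beta''}^\sim/\partial w_l$ respectively.

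It then remains to add the five normal forms together. Because all of $Z^{i,\alpha}(w)$, $\overline{Z}^{i',\alpha'}(w)$, $W^{l,\alpha''}(w)$, $\overline{W}^{l',\alpha'''}(w)$ are holomorphic near the origin of $\C^k$, the resulting left-hand side is a genuine convergent power series in $(\overline{z},z,w)$; hence, by uniqueness of power series expansions, it vanishes identically if and only if for every $j$ and every $(\alpha,\beta)\in\N^n\times\N^n$ its coefficient of $\overline{z}^\alpha z^\beta$---which is holomorphic in $w$---vanishes identically in $\C\{w\}$, and that coefficient is, term by term, the boxed expression~\eqref{main-formulae}. The asserted linearity in the four listed families of partial derivatives is then plain by inspection, each of the five contributions being linear in one of those families with coefficients assembled solely from the fixed data $\overline{\Xi}_{\widehat{j},\widehat{\alpha},\widehat{\beta}}^\sim$; and the simultaneous conjugate system is obtained just by conjugating the whole computation (equivalently, by running it against the conjugate defining equations $w_j+\overline{w}_j=\Xi_j$), so it requires no separate treatment. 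I expect no conceptual obstacle here: the only genuine difficulty is organizational---carrying the nested multiindex convolutions hidden inside $\mathcal{A}_{\alpha,\beta,\gamma}$ faithfully through all five reorganizations without dropping or double-counting a term---together with the routine verification that every rearrangement is legitimate because all the series in play converge absolutely near the origin.
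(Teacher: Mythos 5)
Your proposal is correct and follows essentially the same route as the paper: Proposition~\ref{first step} for the tangency characterization, the expansions \eqref{Taylor} and \eqref{Taylor-A}, the $(\overline{z},z)$-expansion of $\big(-2w+\overline{\Xi}\big)^\gamma$ yielding $\mathcal{A}_{\alpha,\beta,\gamma}$, the reorganization of the five sums into \eqref{I}--\eqref{V}, and the extraction of the coefficient of $\overline{z}^\alpha z^\beta$ by uniqueness of power series expansions. The paper's own ``proof'' is precisely this chain of displayed computations preceding the theorem statement, so nothing further is needed.
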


\subsection{The main strategy}
\label{mainstrategy}

According to Theorem \ref{Theorem}, finding the sought infinitesimal
CR-automorphisms $\sf X$ is equivalent to solve the linear {\sc pde}
system constructed by the equations \thetag{\ref{main-formulae}} with
the unknowns $Z^{i,\alpha}, \overline{Z}^{i',\alpha'}, W^{l,\alpha''},
\overline{W}^{l',\alpha'''}$ and afterwards,
finding the expressions of the
holomorphic coefficients $Z^i(z,w)$ and $W^l(z,w)$ of $\sf X$ for
$i=1,\ldots,n$ and $l=1,\ldots,k$, according to the formulae
\thetag{\ref{Taylor}} and thanks to the achieved solution. Then, we
can choose the following strategy to compute the desired
infinitesimal Lie algebra $\frak {aut}_{CR}(M)$ associated to a
specific real analytic CR-generic manifold $M\subset\mathbb
C^{n+k}$:

\begin{itemize}
\smallskip\item[$\bullet$] Constructing the $k$ fundamental equations
\thetag{\ref{initial-tangency-general}}.

\smallskip\item[$\bullet$] Expanding
these equations according to the formulaes \thetag{\ref{Taylor}},
\thetag{\ref{Taylor-A}} and \thetag{\ref{Xi-expansion}}.

\smallskip\item[$\bullet$] Extracting
the coefficients of each $z^\alpha\overline{z}^\beta$ for
$\alpha,\beta\in\mathbb N^n$ and constituting the linear homogeneous
{\sc pde} system of the partial differential equations
\thetag{\ref{main-formulae}}, introduced in Theorem \ref{Theorem}.

\smallskip\item[$\bullet$]
Solving the obtained system by the means of techniques in
differential algebra theory.

\smallskip\item[$\bullet$] Substituting the
solution of the {\sc pde} system into the formulae
\thetag{\ref{Taylor}} to find the holomorphic functions
$Z^{i,\alpha}(w)$ and $W^{l,\alpha} (w)$ as the coefficients of the
infinitesimal CR-automorphisms $\sf X$.
\end{itemize}

It is worth noting that the already introduced linear {\sc pde}
system defined underlying the differential ring:
\[
R
:=
\mathbb{C}
(w)\Big[
Z^{i,\alpha},
\overline{Z}^{i',\alpha'},
W^{l,\alpha''},
\overline{W}^{l',\alpha'''}
\Big]
\]
admits {\em complex} linear equations and hence at the fourth step of
the above strategy, it is not possible to employ the
Rosenfeld-Gr\"obner algorithm directly for solving it. But, it is
feasible to equip this algorithm with the conjugate operator
bar-reduction for being able to treat the complex equations.
Nevertheless, one should notice that such a linear {\sc pde} system is
not necessarily a ${\sf LinCons}$ one, since its coefficients may contain
some powers of the variables $w_l, l=1,\ldots,k$. However, a careful
look at the equations~\thetag{\ref{Xi-expansion}}
and~\thetag{\ref{main-formulae}} reveals that if the
$k$ defining functions $\overline\Xi_i$ are {\em independent} of the
variables $w_1, \dots, w_k$,
then the concerned linear {\sc pde} system
has constant complex coefficients,
namely it will be a ${\sf LinCons}$ {\sc pde} system. Such
CR-generic submanifolds $M\subset\mathbb C^{n+k}$ which are defined as the
graph of the $k$ defining complex equations:
\[
\overline w_j+w_j=\overline\Xi_j\big(\overline z,z\big)
\ \ \ \ {\scriptstyle
(j\,=\,1\,\cdots\,k)}
\]
are usually called the {\sl rigid submanifolds}. They constitute a
wide and considerably significant class of CR-generic
submanifolds ({\em see}
\cite{Baouendi,MerkerPorten,
Beloshapka1997,Boggess, 5-cubic,MS,
Shananina2000,Tumanov}). Hence, in the case of the rigid real
analytic generic CR-generic submanifolds, one can employ the
computationally
much simpler algorithm {\sl LRG} (Algorithm \ref{LRG}) to perform the
fourth step of the above strategy. In the sequel,
using this method, we compute
the Lie algebras of infinitesimal CR-automorphisms associated to
three rigid real analytic generic CR-generic submanifolds of $\mathbb
C^{1+8}$.

\section{Lie algebra of infinitesimal CR-automorphisms
\\
of CR-generic submanifolds $\mathbb M^1,\mathbb M^2$ and $\mathbb M^3$ of
$\mathbb C^{1+8}$} \label{Computations}

The aim of the current section is to compute the Lie algebras of
infinitesimal CR-automorphisms, associated to the three rigid real
analytic CR-generic submanifolds $\mathbb M^1, \mathbb M^2$ and $\mathbb
M^3$ of $\C^{ 1+8}$ of CR dimension $1$,
represented in coordinates $(z,w_1,\ldots,w_8)$
by~\thetag{\ref{models}}. We give in detail the
computations of $\frak{aut}_{CR}(\mathbb M^1)$ and since the
remaining computations of $\frak{aut}_{CR}(\mathbb M^2)$ and
$\frak{aut}_{CR}(\mathbb M^3)$ are fairly similar, then shall not
report them in detail.

For the first model $\mathbb M^1$, represented by the eight real
analytic equations:
\begin{equation}
\label{1st-model} \aligned
w^1-\overline{w}^1&=\Xi_1(z,\overline{z}):=2\,i\,z\overline{z}, \ \ \
\ \ \ \ \ \ \ \ \ \ \ \ \ \ \ \ \ \ \ \
w^2-\overline{w}^2=\Xi_2(z,\overline{z}):=2\,i\,(z^2\overline{z}+z
\overline{z}^2),
\\
w^3-\overline{w}^3&=\Xi_3(z,\overline{z}):=2\,(z^2\overline{z}-z
\overline{z}^2),
\ \ \ \ \ \
 \ \ \
w^4-\overline{w}^4=\Xi_4(z,\overline{z}):=2\,i\,(z^3\overline{z}+z
\overline{z}^3),
\\
w^5-\overline{w}^5&=\Xi_5(z,\overline{z}):=2\,(z^3\overline{z}-z
\overline{z}^3),
\ \ \ \ \ \ \ \ \
w^6-\overline{w}^6=\Xi_6(z,\overline{z}):=2\,i\,z^2\overline{z}^2,
\\
w^7-\overline{w}^7&=\Xi_7(z,\overline{z}):=2\,i\,(z^4\overline{z}+z
\overline{z}^4),
\ \ \ \ \ \ \
w^8-\overline{w}^8=\Xi_8(z,\overline{z}):=2\,(z^4\overline{z}-z
\overline{z}^4),
\endaligned
\end{equation}
a holomorphic vector field ${\sf X}:=Z(z,w)\,\partial_
z+\sum_{l=1}^8\,W^l(z,w)\,\partial_{w_l}$ is tangent to $\mathbb M^1$
if and only if the restriction of the real vector filed ${\sf X}={\sf
X}+\overline{\sf X}$ to each of the above eight defining equations
vanishes identically. In other words, if and only if the holomorphic
functions $Z$ and $W^l$, $l=1,\ldots,8$, and their conjugates enjoy
the following eight equations ({\em cf.}
\thetag{\ref{initial-tangency-general}}):
\begin{equation}
\label{par-initial-tangency-1} \footnotesize\aligned
&0\equiv\big[W^1-\overline{W}^1-2\,i\,\overline{z}\,Z-2\,i\,z
\overline{Z}\big]_{w=\overline{w}+\Xi},
\\
&0\equiv\big[W^2-\overline{W}^2-4\,i\,z\,\overline{z}Z-2\,i\,
\overline{z}^2\,Z-
2\,i\,z^2\overline{Z}-4\,i\,z\overline{z}\overline{Z}\big]_{w=
\overline{w}+\Xi},
\\
&0\equiv\big[W^3-\overline{W}^3-4\,z\overline{z}Z+2\,\overline{z}^2Z-2
\,z^2\overline{Z}+
4\,z\,\overline{z}\overline{Z}\big]_{w=\overline{w}+\Xi},
\\
&0\equiv\big[W^4-\overline{W}^4-6\,i\,z^2\overline{z}Z-2\,i
\overline{z}^3Z-2\,i\,z^3\overline{Z}-
6\,i\,z\overline{z}^2\overline{Z}\big]_{w=\overline{w}+\Xi},
\\
&0\equiv\big[W^5-\overline{W}^5-6\,z^2\overline{z}Z+2\,\overline{z}^3Z-2
\,z^3\overline{Z}+
6\,z\overline{z}^2\overline{Z}\big]_{w=\overline{w}+\Xi},
\\
&0\equiv\big[W^6-\overline{W}^6-4\,i\,z\overline{z}^2Z-4\,i\,z^2
\overline{z}\overline{Z}\big]_{w=\overline{w}+\Xi},
\\
&0\equiv\big[W^7-\overline{W}^7-8\,i\,z^3\overline{z}Z-2\,i\,
\overline{z}^4{Z}-2\,i\,z^4\overline{Z}-
8\,i\,z\overline{z}^3\overline{Z}\big]_{w=\overline{w}+\Xi},
\endaligned
\end{equation}
\begin{equation*}
\footnotesize\aligned
&0\equiv\big[W^8-\overline{W}^8-8\,z^3\overline{z}Z+2\,\overline{z}^4Z-2
\,z^4\overline{Z}+
8\,z\overline{z}^3\overline{Z}\big]_{w=\overline{w}+\Xi}.
\endaligned
\end{equation*}
These functions are real analytic and hence one may expand them with
respect to the powers of $z$ ({\em cf.} \thetag{\ref{Taylor}}):
\[
Z(z,w) = \sum_{k\in\mathbb{N}}\,z^k\,Z_k(w) \ \ \ \ \ \ \ \ \text{\rm
and} \ \ \ \ \ \ \ \ W^l(z,w) = \sum_{k\in\mathbb{N}}\,z^k\,W_k^l(w).
\]
Our current aim is to find a closed form expression for the
holomorphic functions $Z(z,w)$ and $W^l(z,w)$ by using their
corresponding Taylor series. One sees simplified expressions of these
functions in the following lemma:

\begin{Lemma}
\label{simplify} The holomorphic functions $Z(z,w)$ and $W^l(z,w),
l=1,\ldots,8$ are all polynomial with respect to $z$:
\[\footnotesize
\aligned\left[\aligned
Z(z,w)&=Z_0(w)+zZ_1(w)+z^2Z_2(w)+z^3Z_3(w)+z^4Z_4(w)+z^5Z_5(w),
\\
W^1(z,w)&=W^1_0(w)+zW^1_1(w),
\\
W^2(z,w)&=W^2_0(w)+zW^2_1(w)+z^2W^2_2(w),
\\
W^3(z,w)&=W^3_0(w)+zW^3_1(w)+z^2W^3_2(w),
\\
W^4(z,w)&=W^4_0(w)+zW^4_1(w)+z^2W^4_2(w)+z^3W^4_3(w),
\\
W^5(z,w)&=W^5_0(w)+zW^5_1(w)+z^2W^5_2(w)+z^3W^5_3(w),
\\
W^6(z,w)&=W^6_0(w),
\\
W^7(z,w)&=W^7_0(w)+zW^7_1(w)+z^2W^7_2(w)+z^3W^7_3(w)+z^4W^7_4(w),
\\
W^8(z,w)&=W^8_0(w)+zW^8_1(w)+z^2W^8_2(w)+z^3W^8_3(w)+z^4W^8_4(w).
\endaligned
\right.
\endaligned
\]
\end{Lemma}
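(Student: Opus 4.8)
The plan is to substitute the $z$-expansions $Z(z,w)=\sum_{k\in\mathbb N}z^k\,Z_k(w)$ and $W^l(z,w)=\sum_{k\in\mathbb N}z^k\,W^l_k(w)$, together with their complex conjugates, into the eight tangency equations~\thetag{\ref{par-initial-tangency-1}}, and then to read off the announced polynomial shapes by extracting the coefficients of well-chosen monomials $\overline z^{\,\beta}$ in the resulting identities, which hold identically in $(z,\overline z,\overline w)$ after complexification.

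The key first remark is that every defining function $\Xi_j(z,\overline z)$ of~\thetag{\ref{1st-model}} is divisible by $\overline z$, that is, $\Xi_j(z,0)\equiv 0$. Consequently, upon setting $\overline z=0$ in the $j$-th equation of~\thetag{\ref{par-initial-tangency-1}}, the substitution $w=\overline w+\Xi$ becomes the identity and every summand carrying an explicit factor $\overline z$ disappears; what survives is exactly
\[
W^j(z,\overline w)-\overline{W^j_0}(\overline w)-c_j\,z^{d_j}\,\overline{Z_0}(\overline w)=0,
\]
where reading off~\thetag{\ref{par-initial-tangency-1}} gives $(c_j,d_j)=(2i,1),(2i,2),(2,2),(2i,3),(2,3)$ for $j=1,\dots,5$, \emph{no such term at all} for $j=6$, and $(c_j,d_j)=(2i,4),(2,4)$ for $j=7,8$. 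Since $\overline{Z_0}$ and $\overline{W^j_0}$ carry no $z$ and $\overline w$ is a free variable, each holomorphic function $W^j(z,w)$ must be a polynomial in $z$ of degree $\le d_j$ of precisely the shape displayed in the Lemma: degree $\le 1$ for $W^1$, $\le 2$ for $W^2,W^3$, $\le 3$ for $W^4,W^5$, $\le 0$ for $W^6$, and $\le 4$ for $W^7,W^8$.

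It then remains to bound the $z$-degree of $Z$. For this I would extract the coefficient of $\overline z^{\,1}$ in the \emph{first} equation of~\thetag{\ref{par-initial-tangency-1}}. Writing $W^1(z,\overline w+\Xi)=W^1(z,\overline w)+\sum_{l=1}^{8}\partial_{w_l}W^1(z,\overline w)\,\Xi_l+{\rm O}(\Xi^2)$ and observing that every monomial of every $\Xi_l$ carries at least one power of $\overline z$, so that ${\rm O}(\Xi^2)={\rm O}(\overline z^{\,2})$, the vanishing of the $\overline z^{\,1}$-coefficient of the substituted bracket reads
\[
\sum_{l=1}^{8}\partial_{w_l}W^1(z,\overline w)\,\Xi_{l,(1)}(z)\;-\;\overline{W^1_1}(\overline w)\;-\;2i\,Z(z,\overline w)\;-\;2iz\,\overline{Z_1}(\overline w)\;=\;0,
\]
where $\Xi_{l,(1)}(z)$ is the coefficient of $\overline z^{\,1}$ in $\Xi_l(z,\overline z)$, namely $2iz,\,2iz^2,\,2z^2,\,2iz^3,\,2z^3,\,0,\,2iz^4,\,2z^4$ for $l=1,\dots,8$, a polynomial in $z$ of degree at most $4$. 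Solving this identity for $Z(z,\overline w)$ and invoking the bound $\deg_z W^1\le 1$ just established (whence $\deg_z\partial_{w_l}W^1\le 1$), the right-hand side is a polynomial in $z$ of degree at most $\max\{1+4,\,1\}=5$; therefore $Z(z,w)$ is a polynomial in $z$ of degree $\le 5$, as claimed.

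The only genuinely delicate point is the bookkeeping in this last step: one must make sure that, when substituting $w=\overline w+\Xi$ into $W^1$, no contribution to the $\overline z^{\,1}$-coefficient is overlooked and none of unexpectedly high $z$-degree is produced. This is controlled by the structural fact that $\Xi_l={\rm O}(z\overline z)$ with each monomial divisible by $\overline z$, which forces only the linear Taylor term $\sum_l\partial_{w_l}W^1\cdot\Xi_l$ to interact with a single power of $\overline z$, and within it only the piece $\Xi_{l,(1)}(z)$ of $z$-degree $\le 4$; combined with $\deg_z W^1\le 1$ this pins the degree of $Z$ at $5$. Everything else is a routine matching of Taylor coefficients, and the same $\overline z$-expansion, carried to higher orders, is precisely what one feeds into the {\sc LRG}/Rosenfeld-Gr\"obner machinery in the sequel.
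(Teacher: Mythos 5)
Your proof is correct, and it reaches the same conclusion by the same underlying mechanism as the paper --- extraction of $(\overline z)$-Taylor coefficients from the complexified tangency identities, exploiting that every monomial of every $\Xi_l$ is divisible by both $z$ and $\overline z$ --- but the packaging is genuinely different and worth noting. The paper chases one monomial at a time: the coefficient of $\overline z^{\,k}$ for $k\geqslant 2$ yields the infinite family of vanishing conditions $\overline W{}^1_k\equiv 0$ (and its analogues for $j=2,\dots,8$ with the appropriate thresholds), while the coefficient of $z\,\overline z^{\,k'}$ for $k'\geqslant 6$ yields $\overline Z_{k'}\equiv 0$. You instead evaluate the whole identity at $\overline z=0$ and then extract the full $\overline z^{\,1}$-coefficient as an identity in $(z,\overline w)$, which produces \emph{closed-form} expressions
\[
W^j(z,\overline w)=\overline{W^j_0}(\overline w)+c_j\,z^{d_j}\,\overline{Z_0}(\overline w),
\qquad
2i\,Z(z,\overline w)=\sum_{l}\partial_{w_l}W^1(z,\overline w)\,\Xi_{l,(1)}(z)-\overline{W^1_1}(\overline w)-2i\,z\,\overline{Z_1}(\overline w),
\]
from which the degree bounds $\deg_z W^1\le 1$, $\deg_z W^2,W^3\le 2$, $\deg_z W^4,W^5\le 3$, $\deg_z W^6=0$, $\deg_z W^7,W^8\le 4$ and $\deg_z Z\le 1+4=5$ are immediate. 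Your version buys two things: it replaces infinitely many coefficient identities by a single substitution plus one coefficient extraction, and it delivers for free the explicit relations $W^j_{d_j}=c_j\,\overline{Z_0}$ that the paper only recovers later as part of the {\sc pde} system \thetag{\ref{PDE-sys}} (compare the equations $(0,\nu:\ell)$ there). The paper's coefficient-by-coefficient chase, on the other hand, is the form that feeds directly into the {\sc LRG} machinery, so nothing is lost either way. All your bookkeeping checks out: the list of $\Xi_{l,(1)}(z)$, the values $(c_j,d_j)$, the absence of a $\overline{Z_0}$-term for $j=6$, and the observation that ${\rm O}(\Xi^2)={\rm O}(\overline z^{\,2})$ are all consistent with \thetag{\ref{1st-model}} and \thetag{\ref{par-initial-tangency-1}}.
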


\proof After expansion, the first equation
\thetag{\ref{par-initial-tangency-1}} reads:
\begin{equation}
\aligned 0&\equiv \sum_{k\in\mathbb
N}\,z^k\,\big[W^1_k(\overline{w}_1+\Xi_1,\overline{w}_2+\Xi_2,\ldots,
\overline{w}_8+\Xi_8)-
\\
&\ \ \ \ \ \ \ \ \ \ \ \ -2\,i\,\overline
z\,Z_k(\overline{w}_1+\Xi_1,\overline{w}_2+\Xi_2,\ldots,\overline{w}_8
+\Xi_8)\big]+
\\
&+\sum_{k\in\mathbb N}\,\overline z^k\big[-\overline W^1_k(\overline
w_1,\overline w_2,\ldots,\overline w_8)-2\,i\,z\,\overline
Z_k(\overline w_1,\overline w_2,\ldots,\overline w_8)\big].
\endaligned
\end{equation}
Then, we expand further each holomorphic function $Z^k$ and $W^1_k$
according to the formulae ({\em cf.} \thetag{\ref{Taylor-A}}):
\[
\footnotesize\aligned
A&(\overline{w}_1+\Xi_1,\overline{w}_2+\Xi_2,\ldots,\overline{w}_8
+\Xi_8)=
\\
&\sum_{l_1,\ldots,l_8\in\mathbb N}\,A_{w_1^{l_1}\,w_2^{l_2}\ldots
w_8^{l_8}}(\overline w_1,\overline w_2,\ldots,\overline
w_8)\,\frac{(2\,i\,z\overline
z)^{l_1}}{l_1!}\cdot\frac{(2\,i\,z^2\overline z+2\,i\,z\overline
z^2)^{l_2}}{l_2!}\cdot\frac{(2\,z^2\overline z-2\,z\overline
z^2)^{l_3}}{l_3!}\cdot
\\
& \ \ \ \ \ \ \ \ \ \ \ \ \ \ \ \ \ \ \ \ \ \ \ \ \ \
 \ \frac{(2\,i\,z^3\overline z+2\,i\,z\overline
z^3)^{l_4}}{l_4!}\cdot\frac{(2\,z^3\overline z+2\,z\overline
z^3)^{l_5}}{l_5!}\cdot\frac{(2\,i\,z^2\overline
z^2)^{l_6}}{l_6!}\cdot
\\
& \ \ \ \ \ \ \ \ \ \ \ \ \ \ \ \ \ \ \ \ \ \ \ \ \ \ \ \ \ \ \ \ \ \
\ \ \ \ \ \ \ \ \ \ \frac{(2\,i\,z^4\overline z+2\,i\,z\overline
z^4)^{l_7}}{l_7!}\cdot\frac{(2\,z^4\overline z+2\,z\overline
z^4)^{l_8}}{l_8!}.
\endaligned
\]
 Chasing the coefficient of
$\overline{ z}^k$ for every $k \geqslant 2$
after further expansion, we therefore see that the first two lines
give absolutely no contribution, and that from the third line, it
only comes: $0 \equiv \overline{ W}_k^1 ( \overline{ w})$, which is
what was claimed about the $W_k^1$.

Next, chasing the coefficient of
$z \overline{ z}^{ k'}$ for every $k' \geqslant 6$, we get $0 \equiv
\overline{ Z}_{ k'} ( \overline{ w})$. The seven remaining families
of vanishing equations $0 \equiv \overline{W}_{k_l}^l ( \overline{
w})$ for $l=2,\ldots,8$ with $k_2,k_3\geqslant 3$, with $k_4,k_5\geqslant 4,
k_6\geqslant 1$ and with $k_7,k_8\geqslant 5$ are obtained in a completely
similar way by looking at the second to eighth equations of
\thetag{\ref{par-initial-tangency-1}}.
\endproof

After this extensive simplification, we try to determine expressions
of the remaining $33$ holomorphic functions $Z_0,Z_1,\ldots,W^8_4$ in
the above lemma. Substituting in
\thetag{\ref{par-initial-tangency-1}} the already obtained
expressions for the nine functions $Z,W^l, l=1,\dots,8$, the
fundamental equations \thetag{\ref{par-initial-tangency-1}} change
into the form:
\begin{equation}\footnotesize
\label{W1} \aligned 0&\equiv
\sum_{t=0}^1\,\big(z^t\,W^1_t(\overline{w}+\Xi)\big)-\sum_{t=0}^1\,
\big(\overline{z}^t\,\overline{W}^1_t(\overline{w})\big)
-2\,i\sum_{t=0}^5\,\big(z^t\overline{z}\,Z(\overline{w}+\Xi)\big)-2\,i
\sum_{t=0}^5\,\big(z\overline{z}^t\,\overline{Z}_t(\overline{w})\big),
\endaligned
\end{equation}
\begin{equation}
\label{W2} \footnotesize\aligned
0&\equiv\sum_{t=0}^2\,\big(z^t
\,W^2_t(\overline{w}+\Xi)\big)-\sum_{t=0}^2\,\big(\overline{z}^t\,
\overline{W}^2_t(\overline{w})\big)-
4\,i\sum_{t=0}^5\,\big(z^{t+1}\overline{z}\,Z(\overline{w}+\Xi)\big)-
\\
&-2\,i\sum_{t=0}^5\big(z^t\overline{z}^2\,Z_t(\overline{w}+\Xi)\big)
-2\,i\sum_{t=0}^5\,\big(z^2\overline{z}^t\,
\overline{Z}_t(\overline{w})\big)-4\,i\sum_{t=0}^5\,\big(z
\overline{z}^{t+1}\overline{Z}_t(\overline{w})\big),
\ \ \ \ \ \ \ \ \ \ \ \ \ \ \ \ \ \ \ \ \ \ \ \ \ \ \ \ \ \ \ \ \ \ \
\ \  \
\endaligned
\end{equation}
\begin{equation}
\label{W3} \footnotesize\aligned
0&\equiv\sum_{t=0}^2\,\big(z^t
\,W^3_t(\overline{w}+\Xi)\big)-\sum_{t=0}^2\,\big(\overline{z}^t\,
\overline{W}^3_t(\overline{w})\big)-
4\sum_{t=0}^5\,\big(z^{t+1}\overline{z}\,Z(\overline{w}+\Xi)\big)+
\\
&+ 2\sum_{t=0}^5\big(z^t\overline{z}^2\,Z_t(\overline{w}+\Xi)\big)
-2\sum_{t=0}^5\,\big(z^2\overline{z}^t\,
\overline{Z}_t(\overline{w})\big)+4\sum_{t=0}^5\,\big(z\overline{z}^{t
+1}\overline{Z}_t(\overline{w})\big),
\ \ \ \ \ \ \ \ \ \ \ \ \ \ \ \ \ \ \ \ \ \ \ \ \ \ \ \ \ \ \ \ \ \ \
\ \ \
\endaligned
\end{equation}
\begin{equation}
\label{W4} \footnotesize\aligned
0&\equiv\sum_{t=0}^3\,\big(z^t
\,W^4_t(\overline{w}+\Xi)\big)-\sum_{t=0}^3\,\big(\overline{z}^t\,
\overline{W}^4_t(\overline{w})\big)-
6\,i\sum_{t=0}^5\,\big(z^{t+2}\overline{z}\,Z(\overline{w}+\Xi)\big)-
\\
&- 2\,i\sum_{t=0}^5\big(z^t\overline{z}^3\,Z_t(\overline{w}+\Xi)\big)
-2\,i\sum_{t=0}^5\,\big(z^3\overline{z}^t\,
\overline{Z}_t(\overline{w})\big)-6\,i\sum_{t=0}^5\,\big(z
\overline{z}^{t+2}\overline{Z}_t(\overline{w})\big),
\ \ \ \ \ \ \ \ \ \ \ \ \ \ \ \ \ \ \ \ \ \ \ \ \ \ \ \ \ \ \ \ \ \ \
\ \ \
\endaligned
\end{equation}
\begin{equation}
\label{W5} \footnotesize\aligned
0&\equiv\sum_{t=0}^3\,\big(z^t
\,W^5_t(\overline{w}+\Xi)\big)-\sum_{t=0}^3\,\big(\overline{z}^t\,
\overline{W}^5_t(\overline{w})\big)-
6\sum_{t=0}^5\,\big(z^{t+2}\overline{z}\,Z(\overline{w}+\Xi)\big)+
\\
&+ 2\sum_{t=0}^5\big(z^t\overline{z}^3\,Z_t(\overline{w}+\Xi)\big)
-2\sum_{t=0}^5\,\big(z^3\overline{z}^t\,
\overline{Z}_t(\overline{w})\big)+6\sum_{t=0}^5\,\big(z\overline{z}^{t
+2}\overline{Z}_t(\overline{w})\big),
\ \ \ \ \ \ \ \ \ \ \ \ \ \ \ \ \ \ \ \ \ \ \ \ \ \ \ \ \ \ \ \ \ \ \
\ \ \
\endaligned
\end{equation}
\begin{equation}
\label{W6} \footnotesize\aligned 0&\equiv
W^6_0(\overline{w}+\Xi)-\overline{W}^6_t(\overline{w})-
4\,i\sum_{t=0}^5\,\big(z^{t+1}\overline{z}^2\,Z(\overline{w}+\Xi)\big)-
4\,i\sum_{t=0}^5\big(z^2\overline{z}^{t+1}\,
\overline{Z}_t(\overline{w})\big),
\endaligned
\end{equation}
\begin{equation}
\label{W7} \footnotesize\aligned
0&\equiv\sum_{t=0}^4\,\big(z^t
\,W^7_t(\overline{w}+\Xi)\big)-\sum_{t=0}^4\,\big(\overline{z}^t\,
\overline{W}^7_t(\overline{w})\big)-
8\,i\sum_{t=0}^5\,\big(z^{t+3}\overline{z}\,Z(\overline{w}+\Xi)\big)-
\\
&- 2\,i\sum_{t=0}^5\big(z^t\overline{z}^4\,Z_t(\overline{w}+\Xi)\big)
-2\,i\sum_{t=0}^5\,\big(z^4\overline{z}^t\,
\overline{Z}_t(\overline{w})\big)-8\,i\sum_{t=0}^5\,\big(z
\overline{z}^{t+3}\overline{Z}_t(\overline{w})\big),
\ \ \ \ \ \ \ \ \ \ \ \ \ \ \ \ \ \ \ \ \ \ \ \ \ \ \ \ \ \ \ \ \ \ \
\ \ \
\endaligned
\end{equation}
\begin{equation}
\label{W8} \footnotesize\aligned
0&\equiv\sum_{t=0}^4\,\big(z^t
\,W^8_t(\overline{w}+\Xi)\big)-\sum_{t=0}^4\,\big(\overline{z}^t\,
\overline{W}^8_t(\overline{w})\big)-
8\sum_{t=0}^5\,\big(z^{t+3}\overline{z}\,Z(\overline{w}+\Xi)\big)+
\\
&+ 2\sum_{t=0}^5\big(z^t\overline{z}^4\,Z_t(\overline{w}+\Xi)\big)
-2\sum_{t=0}^5\,\big(z^4\overline{z}^t\,
\overline{Z}_t(\overline{w})\big)+8\sum_{t=0}^5\,\big(z\overline{z}^{t
+3}\overline{Z}_t(\overline{w})\big).
\ \ \ \ \ \ \ \ \ \ \ \ \ \ \ \ \ \ \ \ \ \ \ \ \ \ \ \ \ \ \ \ \ \ \
\ \ \
\endaligned
\end{equation}
Now we arrive at the point where we should
expand the functions $Z_\bullet(\overline{w}+\Xi)$ and
$W^\bullet_\bullet(\overline{w}+\Xi)$ appearing in the above eight
equations \thetag{\ref{W1}}--\thetag{\ref{W8}} using the Taylor
series~\thetag{\ref{Taylor-A}}. Afterwards, the
coefficients of $z^\mu\overline{z}^\nu$ in these equations should be
extracted and set equal to zero,
identically. This is equivalent to solve the
${\sf LinCons}$ {\sc pde} system constructed by these coefficients
underlying the differential ring $R:=\mathbb
C(w)[Z_\bullet,W^1_\bullet,\ldots,W^8_\bullet]$, equipped with the
functional conjugation operator.

Doing so, we have extracted these coefficients
as much as it was needed\footnote{ The number of the equations that
we found during this step was more than 63 but many of them could be
obtained from the remaining ones.}
and obtained the following
system of 63 equations with 33 unknowns $Z_0,Z_1,\ldots,W^8_4$,
which are in fact the
indeterminates of the differential ring $R$. Here by $(\mu,\nu:\ell)$
we mean the coefficient of $z^\mu\overline{z}^\nu$ in the fundamental
equation ($\ell$) for $\ell=\ref{W1},\ldots,\ref{W8}$:
\begin{equation}
\label{PDE-sys}\footnotesize\aligned &(0,1:\ref{W1}): \,
-2\,i\,Z_0-\overline{W}^1_1\equiv 0, \ \ \ \ \ (0,1:\ref{W2}): \
\overline{W}^2_1\equiv 0, \ \ \ \ \ (0,1:\ref{W3}): \
\overline{W}^3_1\equiv 0, \ \ \ \ \ (0,1:\ref{W4}): \
\overline{W}^4_1\equiv 0,
\\
& (0,1:\ref{W5}): \ \overline{W}^5_1\equiv 0, \ \ \ \ \
(0,1:\ref{W6}): \ \overline{W}^6_1\equiv 0, \ \ \ \ \ (0,1:\ref{W7}):
\ \overline{W}^7_1\equiv 0, \ \ \ \ \ (0,1:\ref{W8}): \
\overline{W}^8_1\equiv 0,
\\
& (1,1:\ref{W1}): \
-2\,i\,Z_1-2\,i\,\overline{Z}_1+2\,i\,\overline{W}^1_{0,w^1}\equiv 0,
\ \ \ \ \ (1,1:\ref{W2}): \
-4\,i\,Z_0-4\,i\,\overline{Z}_0+2\,i\,\overline{W}^2_{0,w^1}\equiv 0,
\\
& (1,1:\ref{W3}): \
-4\,Z_0+4\,\overline{Z}_0+2\,i\,\overline{W}^3_{0,w^1}\equiv 0, \ \ \
\ \ (1,1:\ref{W4}): \ W^4_{0,w^1}\equiv 0, \ \ \ \ \  (1,1:\ref{W5}):
\ W^5_{0,w^1}\equiv 0,
\\
&(1,1:\ref{W6}): \ W^6_{0,w^1}\equiv 0, \ \ \ \ \ (1,1:\ref{W7}): \
W^7_{0,w^1}\equiv 0, \ \ \ \ \ (1,1:\ref{W8}): \ W^8_{0,w^1}\equiv 0,
\\
& (0,2:\ref{W2}): \ 2\,i\,Z_0+\overline{W}^2_2\equiv 0, \ \ \ \ \
(0,2:\ref{W3}): \ 2\,Z_0-\overline{W}^3_2\equiv 0, \ \ \ \ \
(0,2:\ref{W4}): \ \overline{W}^4_2\equiv 0, \ \ \ \ \
\\
& (0,2:\ref{W5}): \ \overline{W}^5_2\equiv 0, \ \ \ \ \
(0,2:\ref{W7}): \ \overline{W}^7_2\equiv 0, \ \ \ \ \ (0,2:\ref{W8}):
\ \overline{W}^8_2\equiv 0,
\\
& (1,2:\ref{W1}): \
-2\,i\,\overline{Z}_2+2\,i\,W^1_{0,w^2}-2\,W^1_{0,w^3}+4
\,Z_{0,w^1}\equiv
0, \ \ \ \ \ (1,2:\ref{W2}): \
-2\,i\,Z_1-4\,i\,\overline{Z}_1-2\,W^2_{0,w^3}+2\,i\,W^2_{0,w^2}\equiv
0,
\\
& (1,2:\ref{W3}): \
2\,Z_1+4\,\overline{Z}_1-2\,W^3_{0,w^3}+2\,i\,W^3_{0,w^2}\equiv 0, \
\ \ \ \ (1,2:\ref{W4}): \
-6\,i\,\overline{Z}_0-2\,W^4_{0,w^3}+2\,i\,W^4_{0,w^2}\equiv 0,
\\
& (1,2:\ref{W5}): \
6\,\overline{Z}_0-2\,W^5_{0,w^3}+2\,i\,W^5_{0,w^2}\equiv 0, \ \ \ \ \
(1,2:\ref{W6}): \ -4\,i\,{Z}_0-2\,W^6_{0,w^3}+2\,i\,W^6_{0,w^2}\equiv
0,
\\
&(1,2:\ref{W7}): \ -2\,W^7_{0,w^3}+2\,i\,W^7_{0,w^2}\equiv 0, \ \ \ \
\ (1,2:\ref{W8}): \ -2\,W^8_{0,w^3}+2\,i\,W^8_{0,w^2}\equiv 0,
\\
&(0,3:\ref{W4}): \ 2\,i\,Z_0 +\overline{W}^4_3\equiv 0, \ \ \ \ \
(0,3:\ref{W5}): \ 2\,Z_0 -\overline{W}^5_3\equiv 0, \ \ \ \ \
(0,3:\ref{W7}): \ \overline{W}^7_3\equiv 0, \ \ \ \ \ (0,3:\ref{W8}):
\ \overline{W}^8_3\equiv 0,
\\
& (1,3:\ref{W1}): \
-\overline{W}^1_{0,w^5}-i\,\overline{Z}_3+i\,W^1_{0,w^4}\equiv 0, \ \
\ \ \ \ (1,3:\ref{W2}): \
-2\,i\,\overline{Z}_2+i\,W^2_{0,w^4}-\overline{W}^2_{0,w^5}\equiv 0,
\\
& (1,3:\ref{W3}): \
2\,\overline{Z}_2-W^3_{0,w^5}+i\,\overline{W}^3_{0,w^4}\equiv 0, \ \
\ \ \ (1,3:\ref{W4}): \
-2\,i\,Z_1-6\,i\,\overline{Z}_1-2\,W^4_{0,w^5}+2\,i\,
\overline{W}^4_{0,w^4}\equiv
0,
\\
& (1,3:\ref{W5}): \
2\,Z_1+6\,\overline{Z}_1-2\,W^5_{0,w^5}+2\,i\,
\overline{W}^5_{0,w^4}\equiv
0, \ \ \ \ \ (1,3:\ref{W6}): \
-2\,W^6_{0,w^5}+2\,i\,\overline{W}^6_{0,w^4}\equiv 0,
\\
&
(1,3:\ref{W7}): \
-8\,i\,\overline{Z}_0-2\,W^7_{0,w^5}+2\,i\,\overline{W}^7_{0,w^4}\equiv
0, \ \ \ \ \ (1,3:\ref{W8}): \
8\,\overline{Z}_0-2\,W^8_{0,w^5}+2\,i\,\overline{W}^8_{0,w^4}\equiv
0,
\\
 & (2,2:\ref{W1}): \
i\,W^1_{0,w^6}-Z_{1,w^1}\equiv 0, \ \ \ \ \ (2,2:\ref{W2}): \
i\,W^2_{0,w^6}-i\,Z_2-i\,\overline{Z}_2\equiv 0,
\\
&(2,2:\ref{W3}): \ -W^3_{0,w^6}+i\,Z_2-i\,\overline{Z}_2\equiv 0, \ \
\ \ \ (2,2:\ref{W4}): \ i\,W^4_{0,w^6}\equiv 0, \ \ \ \ \
(2,2:\ref{W5}): \ i\,W^5_{0,w^6}\equiv 0,
\\
& (2,2:\ref{W6}): \
i\,W^6_{0,w^6}-2\,i\,Z_1-2\,i\,\overline{Z}_1\equiv 0, \ \ \ \ \
(2,2:\ref{W7}): \ i\,W^7_{0,w^6}\equiv 0, \ \ \ \ \ (2,2:\ref{W8}): \
i\,W^8_{0,w^6}\equiv 0,
\\
& (0,4:\ref{W7}): \ 2\,i\,Z_0+\overline{W}^7_4\equiv 0, \ \ \ \ \
(0,4:\ref{W8}): \ 2\,Z_0-\overline{W}^8_4\equiv 0,
\\
& (1,4:\ref{W1}): \
-i\,\overline{Z}_4+i\,\overline{W}^1_{0,w^7}-W^1_{0,w^8}\equiv 0, \ \
\ \ \ (1,4:\ref{W2}): \
-2\,i\,\overline{Z}_3+i\,\overline{W}^2_{0,w^7}-W^2_{0,w^8}\equiv 0,
\\
& (1,4:\ref{W3}): \ {W}^3_{0,w^8}-i\,W^3_{0,w^7}\equiv 0, \ \ \ \ \
(1,4:\ref{W4}): \
-3\,i\,\overline{Z}_2+i\,\overline{W}^4_{0,w^7}-W^4_{0,w^8}\equiv 0,
\\
& (1,4:\ref{W5}): \
3\,\overline{Z}_2-\overline{W}^5_{0,w^8}+i\,W^5_{0,w^7}\equiv 0, \ \
\ \ \ (1,4:\ref{W6}): \
2\,i\,\overline{W}^6_{0,w^7}-2\,i\,W^6_{0,w^8}\equiv 0,
\\
& (1,4:\ref{W7}): \
-2\,i\,Z_1-8\,i\,\overline{Z}_1-2\,\overline{W}^7_{0,w^8}+2\,i
\,W^7_{0,w^7}\equiv
0, \ \ \ \ \ (1,4:\ref{W8}): \
2\,i\,Z_1+8\,\overline{Z}_1-2\,\overline{W}^8_{0,w^8}+2\,i
\,W^8_{0,w^7}\equiv
0,
\\
 & (1,4:\ref{W2}): \ -i\,\overline{Z}_3\equiv 0, \ \ \ \ \
(1,4:\ref{W4}): \ -i\,{Z}_2\equiv 0,  \ \ \ \ \
 \ \ \ \ \ \ \ \ \ \ \ \ \ \ \ \ \ \ \ \ \ \ \ \ \ \ \ \ \ \ \ \ \ \
 \ \ \ \ \ \ \ \ \ \ \ \ \ \ \
\\
& (1,5:\ref{W1}): \ -i\,\overline{Z}_5\equiv 0, \ \ \ \ \
(1,5:\ref{W2}): \ -i\,\overline{Z}_4\equiv 0.
\endaligned
\end{equation}

We employ the {\sc Maple} package {\tt DifferentialAlgebra},
performing
the approach described in Section \ref{Solving PDE} based on the
Rosenfeld-Gr\"obner algorithm to solve such a
${\sf LinCons}$ {\sc pde} system, and we obtain the following
general solution:
\begin{equation*}
\footnotesize\aligned Z(z,w)&:=\underbrace{{\sf c}+i\,{\sf
d}}_{Z_0(w)} +\underbrace{({\sf a}+i\,{\sf b})}_{Z_1(w)}z,
\\
W^1(z,w)&:=\underbrace{{\sf c}_1+2\,{\sf
a}\,w^1}_{W^1_0(w)}+2\,(\underbrace{{\sf d}+i\,{\sf
c}}_{W^1_1(w)})\,z,
\\
 W^2(z,w)&:=\underbrace{{\sf c}_2+4\,{\sf
c}\,w^1+3\,{\sf a}\,w^2-{\sf b}\,w^3}_{W^2_0(w)}+\underbrace{2\,({\sf
d}+i\,{\sf c})}_{W^2_2(w)}\,z^2,
\\
 W^3(z,w)&:=\underbrace{{\sf c}_3+4\,{\sf
d}\,w^1+{\sf b}\,w^2+3\,{\sf a}\,w^3}_{W^3_0(w)}+\underbrace{2\,({\sf
c}-i\,{\sf d})}_{W^3_2(w)}\,z^2,
\\
 W^4(z,w)&:=\underbrace{{\sf
c}_4+3\,{\sf c}\,w^2-3\,{\sf d}\,w^3+4\,{\sf a}\,w^4-2\,{\sf
b}\,w^5}_{W^4_0(w)}+\underbrace{2\,({\sf d}+i\,{\sf
c})}_{W^4_3(w)}\,z^3,
\\
 W^5(z,w)&:=\underbrace{{\sf c}_5+3\,{\sf
d}\,w^2+3\,{\sf c}\,w^3+2\,{\sf b}\,w^4+4\,{\sf
a}\,w^5}_{W^5_0(w)}+\underbrace{2\,({\sf c}-i\,{\sf
d})}_{W^5_3(w)}\,z^3,
\\
 W^6(z,w)&:=\underbrace{{\sf c}_6+2\,{\sf
c}\,w^2+2\,{\sf d}\,w^3+4\,{\sf a}\,w^6}_{W^6_0(w)},
\endaligned
\end{equation*}
\begin{equation*}
\footnotesize\aligned W^7(z,w)&:=\underbrace{{\sf c}_7+4\,{\sf
c}\,w^4-4\,{\sf d}\,w^5+5\,{\sf a}\,w^7-3\,{\sf
b}\,w^8}_{W^7_0(w)}+\underbrace{2\,({\sf d}+i\,{\sf
c})}_{W^7_4(w)}\,z^4,
\\
W^8(z,w)&:=\underbrace{{\sf c}_8+4\,{\sf d}\,w^4+4\,{\sf
c}\,w^5+3\,{\sf b}\,w^7+5\,{\sf
a}\,w^8}_{W^8_0(w)}+\underbrace{2\,({\sf c}-i\,{\sf
d})}_{W^8_4(w)}\,z^4,
\endaligned
\end{equation*}
for some twelve real variables ${\sf c}_1, \ldots, {\sf c}_8, {\sf
a}, {\sf b}, {\sf c}, {\sf d} \in \R$.
Putting these functions into the
general expression ${\sf
X}:=Z(z,w)\,\partial_z+\sum_{l=1}^8\,W^l(z,w)\,\partial_{w^l}$ of the
desired infinitesimal CR-automorphisms gives their general form. One
can check easily that such a parametrized
holomorphic vector field enjoys the
tangency condition $({\sf X}+\overline{\sf X})|_{\mathbb M^1}\equiv
0$. Picking the coefficients of the above twelve real variables in
this general expression, provides twelve $\mathbb R$-linearly
independent infinitesimal CR-automorphisms ${\sf X}_1,\ldots,{\sf
X}_{12}$ which constitute a basis for the desired Lie algebra
$\frak{aut}_{CR}(\mathbb M^1)$. Before presenting them, we recall
that a Lie algebra $\frak g$ is called {\sl graded} in the sense of
Tanaka whenever it admits a gradation like:
\[
\frak g:=\frak g_{-k}\oplus\cdots\oplus\frak g_{-1}\oplus \frak
g_0\oplus\frak g_1\oplus\cdots\oplus\frak g_{k'}, \ \ \ \ \ \ \ \ \
{\scriptstyle{(k,\,k'\,\in\,\mathbb{N})}}
\]
satisfying:
\[
[\frak g_i,\frak g_j]\subset\frak g_{i+j}.
\]

\begin{Theorem}
\label{Theorem-1}
The Lie algebra of infinitesimal CR-automorphisms
$\frak{aut}_{CR}(\mathbb M^1)$ of the rigid real analytic
CR-generic
submanifold $\mathbb M^1\subset\mathbb C^{1+8}$  is $12$-dimensional,
generated by the twelve $\mathbb R$-linearly independent holomorphic
vector fields:
\begin{equation*}
\footnotesize \left\{ \aligned
 {\sf X}_i&:=\partial_{w_i}, \ \ \ \ \ {
i=1,\ldots,8},
\\
 {\sf X}_9&:=z\partial_{z}+2\,w^1\partial_{w^1}+3\,w^2\partial_{w^2}+3
\,w^3\partial_{w^3}+4\,w^4\partial_{w^4}+
 \\
&+
 4\,w^5\partial_{w^5}+4\,w^6\partial_{w^6}+5\,w^7\partial_{w^7}+5\,w^8
\partial_{w^8},
 \\
 {\sf
 X}_{10}&:=i\,z\partial_{z}-w^3\partial_{w^2}+w^2\partial_{w^3}-2\,w^5
\partial_{w^4}+2\,w^4\partial_{w^5}-3\,w^8\partial_{w^7}+3\,w^7
\partial_{w^8},
 \\
 {\sf
 X}_{11}&:=\partial_z+2\,i\,z\partial_{w^1}+(4\,w^1+2\,i
\,z^2)\partial_{w^2}+2\,z^2\partial_{w^3}+(3\,w^2+2\,i
\,z^3)\partial_{w^4}+
 \\
 &+
 (3\,w^3+2\,z^3)\partial_{w^5}+2\,w^2\partial_{w^6}+(4\,w^4+2\,i
\,z^4)\partial_{w^7}+(4\,w^5+2\,z^4)\partial_{w^8},
 \\
 {\sf
 X}_{12}&:=i\,\partial_{z}+2\,z\partial_{w^1}+2\,z^2\partial_{w^2}+(4
\,w^1+-2\,i\,z^2)\partial_{w^3}+(-3\,w^3+2\,z^3)\partial_{w^4}+
 \\
 &+
 (3\,w^2-2\,i\,z^3)\partial_{w^5}+2\,w^3\partial_{w^6}+(-4\,w^5+2
\,z^4)\partial_{w^7}+(4\,w^4-2\,i\,z^4)\partial_{w^8}.
 \endaligned
 \right.
\end{equation*}
Furthermore, it is graded of the form:
\[
\frak{aut}_{CR}(\mathbb M^1):=\frak g_{-5}\oplus\frak
g_{-4}\oplus\frak g_{-3}\oplus\frak g_{-2}\oplus\frak
g_{-1}\oplus\frak g_0
\]
with $\frak g_{-5}:=\langle{\sf X}_7,{\sf X}_8\rangle$,with $\frak
g_{-4}:=\langle{\sf X}_4,{\sf X}_5,{\sf X}_6\rangle$ ,with $\frak
g_{-3}:=\langle{\sf X}_2,{\sf X}_3\rangle$, with $\frak
g_{-2}:=\langle{\sf X}_1\rangle$, with $\frak g_{-1}:=\langle{\sf
X}_{11},{\sf X}_{12}\rangle$, and with $\frak g_0:=\langle{\sf
X}_9,{\sf X}_{10}\rangle$ and together with the following table of
commutators:

\medskip
\begin{center}
\begin{tabular}{|c|c|c|c|c|c|c|c|c|c|c|c|c|}
  \hline
   & ${\sf X}_{1}$ & ${\sf X}_{2}$ & ${\sf X}_{3}$ & ${\sf X}_{4}$ &
${\sf X}_{5}$ & ${\sf X}_{6}$ & ${\sf X}_{7}$ &
    ${\sf X}_{8}$ & ${\sf X}_{9}$ & ${\sf X}_{10}$ & ${\sf X}_{11}$ &
${\sf X}_{12}$ \\
  \hline
  ${\sf X}_{1}$ & $0$ & $0$ & $0$ & $0$ & $0$ & $0$ & $0$ & $0$ & $2{\sf
X}_{1}$ & $0$ & $4{\sf X}_{2}$ & $4{\sf X}_{3}$ \\
  ${\sf X}_{2}$ & $\ast$ & $0$ & $0$ & $0$ & $0$ & $0$ & $0$ & $0$ &
$3{\sf X}_{2}$ & ${\sf X}_{3}$ & $3{\sf X}_{4}$+$2{\sf X}_{6}$ & $3{\sf
X}_{5}$ \\
  ${\sf X}_{3}$ & $\ast$ & $\ast$ & $0$ & $0$ & $0$ & $0$ & $0$ & $0$ &
$3{\sf X}_{3}$ & -${\sf X}_{2}$ & $3{\sf X}_{5}$ & $-3{\sf
X}_{4}$+$2{\sf X}_{6}$ \\
  ${\sf X}_{4}$ & $\ast$ & $\ast$ & $\ast$ & $0$ & $0$ & $0$ & $0$ & $0$
& $4{\sf X}_{4}$ & $2{\sf X}_{5}$ & $4{\sf X}_{7}$ & $4{\sf X}_{8}$ \\
  ${\sf X}_{5}$ & $\ast$ & $\ast$ & $\ast$ & $\ast$ & $0$ & $0$ & $0$ &
$0$ & $4{\sf X}_{5}$ & $-2{\sf X}_{4}$ & $4{\sf X}_{8}$ & $-4{\sf
X}_{7}$ \\
  ${\sf X}_{6}$ & $\ast$ & $\ast$ & $\ast$ & $\ast$ & $\ast$ & $0$ & $0$
& $0$ & $4{\sf X}_{6}$ & $0$ & $0$ & $0$ \\
  ${\sf X}_{7}$ & $\ast$ & $\ast$ & $\ast$ & $\ast$ & $\ast$ & $\ast$ &
$0$ & $0$ & $5{\sf X}_{7}$ & $3{\sf X}_{8}$ & $0$ & $0$ \\
  ${\sf X}_{8}$ & $\ast$ & $\ast$ & $\ast$ & $\ast$ & $\ast$ & $\ast$ &
$\ast$ & $0$ & $5{\sf X}_{8}$ & $-3{\sf X}_{7}$ & $0$ & $0$ \\
  ${\sf X}_{9}$ & $\ast$ & $\ast$ & $\ast$ & $\ast$ & $\ast$ & $\ast$ &
$\ast$ & $\ast$ & $0$ & $0$ & -${\sf X}_{11}$ & -${\sf X}_{12}$ \\
  ${\sf X}_{10}$ & $\ast$ & $\ast$ & $\ast$ & $\ast$ & $\ast$ & $\ast$ &
$\ast$ & $\ast$ & $\ast$ & $0$ & -${\sf X}_{12}$ & ${\sf X}_{11}$ \\
  ${\sf X}_{11}$ & $\ast$ & $\ast$ & $\ast$ & $\ast$ & $\ast$ & $\ast$ &
$\ast$ & $\ast$ & $\ast$ & $\ast$ & $0$ & $4{\sf X}_1$ \\
  ${\sf X}_{12}$ & $\ast$ & $\ast$ & $\ast$ & $\ast$ & $\ast$ & $\ast$ &
$\ast$ & $\ast$ & $\ast$ & $\ast$ & $\ast$ & $0$ \\
  \hline
\end{tabular}
\end{center}
\end{Theorem}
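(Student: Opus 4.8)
The plan is to harvest the general solution of the tangency system just produced by the {\sl LRG} algorithm together with the package {\tt DifferentialAlgebra}. By Proposition~\ref{first step}, Lemma~\ref{simplify}, and the passage from the eight fundamental equations \eqref{par-initial-tangency-1} to the linear system \eqref{PDE-sys}, the condition ${\sf X}\in\frak{aut}_{CR}(\mathbb M^1)$ is \emph{equivalent} to the displayed ${\sf LinCons}$ {\sc pde} system in the $33$ unknowns $Z_0,\dots,W^8_4$, whose solution set is exactly the twelve-real-parameter family written just above, parametrised by $\mathsf c_1,\dots,\mathsf c_8,\mathsf a,\mathsf b,\mathsf c,\mathsf d\in\R$. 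First I would substitute this family into ${\sf X}=Z\,\partial_z+\sum_{l}W^l\,\partial_{w^l}$ and check directly on the eight defining equations \eqref{1st-model} that $({\sf X}+\overline{\sf X})|_{\mathbb M^1}\equiv0$; this short verification supplies sufficiency (the derivation of \eqref{PDE-sys} having supplied necessity), whence $\dim_{\R}\frak{aut}_{CR}(\mathbb M^1)=12$. One should also record that $\frak{aut}_{CR}(\mathbb M^1)$ is automatically a real Lie algebra: tangency to $M$ is preserved under the bracket, and $[{\sf X}+\overline{\sf X},{\sf Y}+\overline{\sf Y}]=[{\sf X},{\sf Y}]+\overline{[{\sf X},{\sf Y}]}$ is again of the required type.

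Next I would pin down the basis by switching on one real parameter at a time: taking $\mathsf c_i=1$ and the rest zero gives $\partial_{w_i}={\sf X}_i$ for $i=1,\dots,8$; taking $\mathsf a=1$ gives ${\sf X}_9$; $\mathsf b=1$ gives ${\sf X}_{10}$; $\mathsf c=1$ gives ${\sf X}_{11}$; and $\mathsf d=1$ gives ${\sf X}_{12}$. Each identification is checked term by term against the underbraced expressions $Z_0$, $Z_1$, $W^1_0$, $W^1_1$, \dots, $W^8_0$, $W^8_4$ of the solution. As the leading parts $\partial_{w_1},\dots,\partial_{w_8}$, $z\partial_z$, $iz\partial_z$, $\partial_z$, $i\partial_z$ of ${\sf X}_1,\dots,{\sf X}_{12}$ are visibly $\R$-linearly independent, the twelve real parts ${\sf X}_j+\overline{\sf X}_j$ constitute a basis of $\frak{aut}_{CR}(\mathbb M^1)$.

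For the grading, I would give $z$ and $\overline z$ weight $1$, $w^1$ weight $2$, $w^2$ and $w^3$ weight $3$, $w^4$, $w^5$, $w^6$ weight $4$, and $w^7$, $w^8$ weight $5$ --- the weights under which each right-hand side $\Xi_j$ of \eqref{1st-model} is weighted-homogeneous of the weight of $w^j$. Declaring $\partial_{w_i}$ to carry minus the weight of $w_i$ and $\partial_z$ to carry weight $-1$ makes holomorphic vector fields weighted, and a one-line inspection of the explicit formulae then shows that ${\sf X}_7,{\sf X}_8$ have weight $-5$, that ${\sf X}_4,{\sf X}_5,{\sf X}_6$ have weight $-4$, that ${\sf X}_2,{\sf X}_3$ have weight $-3$, that ${\sf X}_1$ has weight $-2$, that ${\sf X}_{11},{\sf X}_{12}$ have weight $-1$, and that ${\sf X}_9,{\sf X}_{10}$ have weight $0$; this is exactly the announced decomposition $\frak g_{-5}\oplus\cdots\oplus\frak g_0$. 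Since the bracket of two weighted-homogeneous vector fields is again weighted-homogeneous of weight their sum, the inclusions $[\frak g_i,\frak g_j]\subset\frak g_{i+j}$ hold for free, and they already force many table entries to vanish --- in particular every bracket of total weight below $-5$, hence all $[{\sf X}_i,{\sf X}_j]$ with $1\le i,j\le 8$.

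Finally, the commutator table is completed by computing one at a time the remaining finitely many brackets $[{\sf X}_p,{\sf X}_q]$ not already killed by the grading; since only the holomorphic parts contribute (once more by $[{\sf X}+\overline{\sf X},{\sf Y}+\overline{\sf Y}]=[{\sf X},{\sf Y}]+\overline{[{\sf X},{\sf Y}]}$), each is a short polynomial vector-field computation, and one checks that it falls into the $\R$-span of ${\sf X}_1,\dots,{\sf X}_{12}$ with the coefficients tabulated. This is the only labour-intensive part, and the one requiring care with the explicit coefficients of ${\sf X}_{11}$ and ${\sf X}_{12}$ --- for instance the coefficient of $\partial_{w^3}$ in ${\sf X}_{12}$ should be read as $4\,w^1-2\,i\,z^2$. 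The genuinely hard step, however, is already behind us: it was the correct set-up and resolution of the large ${\sf LinCons}$ {\sc pde} system \eqref{PDE-sys}, which is precisely what the extended Ritt reduction (Theorem~\ref{ModRitt-Thm}) and the {\sl LRG} algorithm (Algorithm~\ref{LRG}) were designed to handle.
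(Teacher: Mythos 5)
Your proposal follows essentially the same route as the paper: it harvests the twelve-real-parameter general solution of the ${\sf LinCons}$ system \eqref{PDE-sys} produced by the {\sl LRG}/{\tt DifferentialAlgebra} computation, extracts the basis ${\sf X}_1,\dots,{\sf X}_{12}$ by switching on the parameters ${\sf c}_1,\dots,{\sf c}_8,{\sf a},{\sf b},{\sf c},{\sf d}$ one at a time (with the correct matching of parameters to fields), and finishes the grading and commutator table by direct computation. The only genuine addition is the explicit weight assignment ($z\mapsto1$, $w^1\mapsto2$, $w^2,w^3\mapsto3$, $w^4,w^5,w^6\mapsto4$, $w^7,w^8\mapsto5$) justifying the Tanaka grading, which the paper merely asserts; this is a correct and welcome clarification rather than a different approach.
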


\subsection{Two remaining sought Lie algebras $\frak{aut}_{CR}(\mathbb
M^2)$ and $\frak{aut}_{CR}(\mathbb M^3)$}
One can perform long computations similar to what we did for
$\frak{aut}_{CR}(\mathbb M^1)$ and obtain the structure of
the two remaining Lie algebras $\frak{aut}_{CR}(\mathbb M^2)$ and
$\frak{aut}_{CR}(\mathbb M^3)$. Here, we omit the corresponding
intermediate computations\,\,---\,\,since they are similar to those
of $\mathbb M^1$ and offer no new aspect.

\begin{Theorem}
\label{Theorem-2}
 The Lie algebra of infinitesimal CR-automorphisms
$\frak{aut}_{CR}(\mathbb M^2)$ of the rigid real analytic generic
CR-generic submanifold
$\mathbb M^2\subset\mathbb C^{1+8}$, represented as the
graph of the eight defining equations:
\[\aligned
w^j-\overline{w}^j&=\Xi_j(z,\overline{z}), \ \ \ \ \ \ \ \ \ \ \ \
{\scriptstyle (j\,=\,1\,\cdots\,6)},
\\
w^7-\overline{w}^7&=\Xi_7'(z,\overline{z}):=2\,i\,(z^3\overline{z}^2+z^2
\overline{z}^3),
\\
w^8-\overline{w}^8&=\Xi_8'(z,\overline{z}):=2\,(z^3\overline{z}^2-z^2
\overline{z}^3),
\endaligned
\]
 is $12$-dimensional with the holomorphic coefficients:
\begin{equation*}
 \footnotesize\aligned Z(z,w)&:=\underbrace{{\sf c}+i\,{\sf
d}}_{Z_0(w)} +\underbrace{({\sf a}+i\,{\sf b})}_{Z_1(w)}z,
\\
W^1(z,w)&:=\underbrace{{\sf c}_1+2\,{\sf
a}\,w^1}_{W^1_0(w)}+2\,(\underbrace{{\sf d}+i\,{\sf
c}}_{W^1_1(w)})\,z,
\\
 W^2(z,w)&:=\underbrace{{\sf c}_2+4\,{\sf
c}\,w^1+3\,{\sf a}\,w^2-{\sf b}\,w^3}_{W^2_0(w)}+\underbrace{2\,({\sf
d}+i\,{\sf c})}_{W^2_2(w)}\,z^2,
\\
 W^3(z,w)&:=\underbrace{{\sf c}_3+4\,{\sf
d}\,w^1+{\sf b}\,w^2+3\,{\sf a}\,w^3}_{W^3_0(w)}+\underbrace{2\,({\sf
c}-i\,{\sf d})}_{W^3_2(w)}\,z^2,
\\
 W^4(z,w)&:=\underbrace{{\sf
c}_4+3\,{\sf c}\,w^2-3\,{\sf d}\,w^3+4\,{\sf a}\,w^4-2\,{\sf
b}\,w^5}_{W^4_0(w)}+\underbrace{2\,({\sf d}+i\,{\sf
c})}_{W^4_3(w)}\,z^3,
\\
W^5(z,w)&:=\underbrace{{\sf c}_5+3\,{\sf d}\,w^2+3\,{\sf
c}\,w^3+2\,{\sf b}\,w^4+4\,{\sf
a}\,w^5}_{W^5_0(w)}+\underbrace{2\,({\sf c}-i\,{\sf
d})}_{W^5_3(w)}\,z^3,
\endaligned
\end{equation*}
\begin{equation*}
\footnotesize\aligned
 W^6(z,w)&:=\underbrace{{\sf c}_6+2\,{\sf c}\,w^2+2\,{\sf
d}\,w^3+4\,{\sf a}\,w^6}_{W^6_0(w)},
\\
 W^7(z,w)&:=\underbrace{{\sf c}_7+2\,{\sf c}\,w^4+2\,{\sf
d}\,w^5+6\,{\sf c}\,w^6+5\,{\sf a}\,w^7-{\sf b}\,w^8}_{W^7_0(w)},
\\
W^8(z,w)&:=\underbrace{{\sf c}_8-2\,{\sf d}\,w^4+2\,{\sf
c}\,w^5+6\,{\sf d}\,w^6+{\sf b}\,w^7+5\,{\sf a}\,w^8}_{W^8_0(w)},
\endaligned
\end{equation*}
  and is generated by the twelve $\mathbb R$-linearly
independent holomorphic vector fields:
\begin{equation*}
\footnotesize \left\{ \aligned
 {\sf X}_i&:=\partial_{w_i}, \ \ \ \ \ {
i=1,\ldots,8},
\\
 {\sf X}_9&:=z\partial_{z}+2\,w^1\partial_{w^1}+3\,w^2\partial_{w^2}+3
\,w^3\partial_{w^3}+4\,w^4\partial_{w^4}+
 \\
&+
 4\,w^5\partial_{w^5}+4\,w^6\partial_{w^6}+5\,w^7\partial_{w^7}+5\,w^8
\partial_{w^8},
 \\
 {\sf
 X}_{10}&:=i\,z\partial_{z}-w^3\partial_{w^2}+w^2\partial_{w^3}-2\,w^5
\partial_{w^4}+2\,w^4\partial_{w^5}-
 w^8\partial_{w^7}+w^7\partial_{w^8},
 \\
 {\sf
 X}_{11}&:=\partial_z+2\,i\,z\partial_{w^1}+(4\,w^1+2\,i
\,z^2)\partial_{w^2}+2\,z^2\partial_{w^3}+(3\,w^2+2\,i
\,z^3)\partial_{w^4}+
 \\
 &+
 (3\,w^3+2\,z^3)\partial_{w^5}+2\,w^2\partial_{w^6}+(2\,w^4+6
\,w^6)\partial_{w^7}+2\,w^5\partial_{w^8},
 \\
 {\sf
 X}_{12}&:=i\,\partial_{z}+2\,z\partial_{w^1}+2\,z^2\partial_{w^2}+(4
\,w^1-2\,i\,z^2)\partial_{w^3}+(-3\,w^3+2\,z^3)\partial_{w^4}+
 \\
 &+
 (3\,w^2-2\,i\,z^3)\partial_{w^5}+2\,w^3\partial_{w^6}+2\,w^5
\partial_{w^7}+(-2\,w^4+6\,w^6)\partial_{w^8}.
 \endaligned
 \right.
\end{equation*}
Furthermore, it is graded of the form:
\[
\frak{aut}_{CR}(\mathbb M^1):=\frak g_{-5}\oplus\frak
g_{-4}\oplus\frak g_{-3}\oplus\frak g_{-2}\oplus\frak
g_{-1}\oplus\frak g_0
\]
with $\frak g_{-5}:=\langle{\sf X}_7,{\sf X}_8\rangle$,with $\frak
g_{-4}:=\langle{\sf X}_4,{\sf X}_5,{\sf X}_6\rangle$ ,with $\frak
g_{-3}:=\langle{\sf X}_2,{\sf X}_3\rangle$, with $\frak
g_{-2}:=\langle{\sf X}_1\rangle$, with $\frak g_{-1}:=\langle{\sf
X}_{11},{\sf X}_{12}\rangle$, and with $\frak g_0:=\langle{\sf
X}_9,{\sf X}_{10}\rangle$ and together with the following table of
commutators:
\begin{center}
\begin{tabular}{|c|c|c|c|c|c|c|c|c|c|c|c|c|}
  \hline
   & ${\sf X}_{1}$ & ${\sf X}_{2}$ & ${\sf X}_{3}$ & ${\sf X}_{4}$ &
${\sf X}_{5}$ & ${\sf X}_{6}$ & ${\sf X}_{7}$ &
    ${\sf X}_{8}$ & ${\sf X}_{9}$ & ${\sf X}_{10}$ & ${\sf X}_{11}$ &
${\sf X}_{12}$ \\
  \hline
  ${\sf X}_{1}$ & $0$ & $0$ & $0$ & $0$ & $0$ & $0$ & $0$ & $0$ & $2{\sf
X}_{1}$ & $0$ & $4{\sf X}_{2}$ & $4{\sf X}_{3}$ \\
  ${\sf X}_{2}$ & $\ast$ & $0$ & $0$ & $0$ & $0$ & $0$ & $0$ & $0$ &
$3{\sf X}_{2}$ & ${\sf X}_{3}$ & $3{\sf X}_{4}$+$2{\sf X}_{6}$ & $3{\sf
X}_{5}$ \\
  ${\sf X}_{3}$ & $\ast$ & $\ast$ & $0$ & $0$ & $0$ & $0$ & $0$ & $0$ &
$3{\sf X}_{3}$ & -${\sf X}_{2}$ & $3{\sf X}_{5}$ & $-3{\sf
X}_{4}$+$2{\sf X}_{6}$ \\
  ${\sf X}_{4}$ & $\ast$ & $\ast$ & $\ast$ & $0$ & $0$ & $0$ & $0$ & $0$
& $4{\sf X}_{4}$ & $2{\sf X}_{5}$ & $2{\sf X}_{7}$ & $-2{\sf X}_{8}$ \\
  ${\sf X}_{5}$ & $\ast$ & $\ast$ & $\ast$ & $\ast$ & $0$ & $0$ & $0$ &
$0$ & $4{\sf X}_{5}$ & $-2{\sf X}_{4}$ & $2{\sf X}_{8}$ & $2{\sf X}_{7}$
\\
  ${\sf X}_{6}$ & $\ast$ & $\ast$ & $\ast$ & $\ast$ & $\ast$ & $0$ & $0$
& $0$ & $4{\sf X}_{6}$ & $0$ & $6{\sf X}_7$ & $0$ \\
  ${\sf X}_{7}$ & $\ast$ & $\ast$ & $\ast$ & $\ast$ & $\ast$ & $\ast$ &
$0$ & $0$ & $5{\sf X}_{7}$ & ${\sf X}_{8}$ & $0$ & $0$ \\
  ${\sf X}_{8}$ & $\ast$ & $\ast$ & $\ast$ & $\ast$ & $\ast$ & $\ast$ &
$\ast$ & $0$ & $5{\sf X}_{8}$ & $-{\sf X}_{7}$ & $0$ & $0$ \\
  ${\sf X}_{9}$ & $\ast$ & $\ast$ & $\ast$ & $\ast$ & $\ast$ & $\ast$ &
$\ast$ & $\ast$ & $0$ & $0$ & -${\sf X}_{11}$ & -${\sf X}_{12}$ \\
  ${\sf X}_{10}$ & $\ast$ & $\ast$ & $\ast$ & $\ast$ & $\ast$ & $\ast$ &
$\ast$ & $\ast$ & $\ast$ & $0$ & -${\sf X}_{12}$ & ${\sf X}_{11}$ \\
  ${\sf X}_{11}$ & $\ast$ & $\ast$ & $\ast$ & $\ast$ & $\ast$ & $\ast$ &
$\ast$ & $\ast$ & $\ast$ & $\ast$ & $0$ & $4{\sf X}_1$ \\
  ${\sf X}_{12}$ & $\ast$ & $\ast$ & $\ast$ & $\ast$ & $\ast$ & $\ast$ &
$\ast$ & $\ast$ & $\ast$ & $\ast$ & $\ast$ & $0$ \\
  \hline
\end{tabular}
\end{center}
\end{Theorem}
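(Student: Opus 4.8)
The plan is to repeat, almost word for word, the computation carried out for $\mathbb M^1$ in Section~\ref{Computations}, recording only the places where the two new defining functions $\Xi_7'=2\,i\,(z^3\overline z^2+z^2\overline z^3)$ and $\Xi_8'=2\,(z^3\overline z^2-z^2\overline z^3)$ of $\mathbb M^2$ modify the intermediate steps. First I would write the tangency condition $({\sf X}+\overline{\sf X})|_{\mathbb M^2}\equiv 0$ as the eight complexified equations of the form \eqref{initial-tangency-general}. Because the first six defining functions of $\mathbb M^2$ coincide with those of $\mathbb M^1$, the first six of these tangency equations are literally the first six equations of \eqref{par-initial-tangency-1}; only the seventh and eighth change, and they are obtained from $\Xi_7'$ and $\Xi_8'$ exactly as the seventh and eighth equations of \eqref{par-initial-tangency-1} were obtained from $\Xi_7$ and $\Xi_8$.

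Next I would expand $Z(z,w)=\sum_{k}z^k Z_k(w)$ and $W^l(z,w)=\sum_k z^k W^l_k(w)$ and prove the analogue of Lemma~\ref{simplify}: chasing the coefficient of $\overline z^k$ for large $k$, and then of $z\,\overline z^{k'}$ for large $k'$, forces all sufficiently high-order Taylor coefficients of $Z$ and of the $W^l$ to vanish, so that $Z$ and each $W^l$ are polynomials in $z$ whose degrees are dictated by the weights of the monomials occurring in the $\Xi_j$ (and in $\Xi_7',\Xi_8'$); the only structural difference with $\mathbb M^1$ is that the $z$-degree bounds attached to $W^7$ and $W^8$ drop, since the highest $z$-degree occurring in $\Xi_7',\Xi_8'$ is $3$ rather than $4$. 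I would then substitute these truncated expansions back into the eight tangency equations, expand each $Z_\bullet(\overline w+\Xi)$ and $W^\bullet_\bullet(\overline w+\Xi)$ via the Taylor formula \eqref{Taylor-A}, and extract the coefficient of every monomial $z^\mu\overline z^\nu$; by Theorem~\ref{Theorem} this yields a finite ${\sf LinCons}$ {\sc pde} system in the differential ring $R=\mathbb C(w)\big[Z_\bullet,W^1_\bullet,\ldots,W^8_\bullet\big]$ equipped with the functional conjugation operator.

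I would then feed this system to the {\sc Maple} package {\tt DifferentialAlgebra}, run through the bar-reduction / {\sc ExtendedRittReduction} of Section~\ref{Solving PDE} (equivalently, the {\sl LRG} algorithm, Algorithm~\ref{LRG}) so that the conjugate unknowns $\overline Z_\bullet,\overline W^l_\bullet$ are treated consistently, and read off the resulting twelve-parameter family of holomorphic coefficients $Z,W^1,\dots,W^8$ displayed in the statement. Collecting the coefficients of the twelve real parameters ${\sf c}_1,\dots,{\sf c}_8,{\sf a},{\sf b},{\sf c},{\sf d}$ produces the vector fields ${\sf X}_1,\dots,{\sf X}_{12}$, and a direct substitution confirms that the general parametrized field satisfies $({\sf X}+\overline{\sf X})|_{\mathbb M^2}\equiv 0$; hence $\dim_{\mathbb R}\frak{aut}_{CR}(\mathbb M^2)=12$ and these twelve fields form a basis. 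Finally I would record the weight of each ${\sf X}_i$ under the grading element ${\sf X}_9$, check the inclusions $[\frak g_i,\frak g_j]\subset\frak g_{i+j}$, and compute all pairwise brackets to fill in the commutator table; here the entries among ${\sf X}_1,\dots,{\sf X}_6$ and their brackets with ${\sf X}_9,{\sf X}_{10}$ are unchanged from Theorem~\ref{Theorem-1}, and only the entries involving ${\sf X}_7,{\sf X}_8,{\sf X}_{11},{\sf X}_{12}$ require a fresh (short) computation with the new coefficients.

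I expect the one genuine difficulty to be computational rather than conceptual: keeping the ${\sf LinCons}$ system of the extraction step to a manageable size, and certifying that enough coefficients $(\mu,\nu:\ell)$ have been imposed so that the radical differential ideal $[\Sigma]$ is pinned down completely\,\,---\,\,in other words, guaranteeing that the twelve-parameter solution returned by {\tt DifferentialAlgebra} is the \emph{general} one and not merely a sub-family. Once this is in hand, the grading assignment and the verification of the bracket table are routine, if somewhat lengthy, bookkeeping.
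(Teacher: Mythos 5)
Your proposal follows exactly the route the paper takes: the paper itself gives no separate argument for $\mathbb M^2$, stating only that one repeats the $\mathbb M^1$ computation of Section~\ref{Computations} (tangency equations, the analogue of Lemma~\ref{simplify}, coefficient extraction into a ${\sf LinCons}$ system, resolution via {\tt DifferentialAlgebra} with the bar-reduction, then the grading and bracket table). Your plan, including the observation that only the seventh and eighth equations change and that the $z$-degree bounds on $W^7,W^8$ drop, is a faithful and correct instance of that same strategy.
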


\begin{Theorem}
\label{Theorem-3} The Lie algebra of infinitesimal CR-automorphisms
$\frak{aut}_{CR}(\mathbb M^3)$ of the rigid real analytic
CR-generic submanifold
$\mathbb M^3\subset\mathbb M^{1+8}$, represented as the
graph of the eight defining equations:
\[\aligned
w^j-\overline{w}^j&=\Xi_j(z,\overline{z}), \ \ \ \ \ \ \ \ \ \ \ \
{\scriptstyle (j\,=\,1\cdots\,6)},
\\
w^7-\overline{w}^7&=\Xi_7'(z,\overline{z}),
\\
w^8-\overline{w}^8&=\Xi_8''(z,\overline{z}):=2\,i\,(z^4\overline{z}+z
\overline{z}^4),
\endaligned
\]
 is $11$-dimensional with the coefficients:
\begin{equation*}
 \footnotesize\aligned
 Z(z,w)&:=\underbrace{{\sf c}+i\,{\sf
d}}_{Z_0(w)} +\underbrace{{\sf a}}_{Z_1(w)}z,
\\
W^1(z,w)&:=\underbrace{{\sf c}_1+2\,{\sf
a}\,w^1}_{W^1_0(w)}+2\,(\underbrace{{\sf d}+i\,{\sf
c}}_{W^1_1(w)})\,z,
\\
 W^2(z,w)&:=\underbrace{{\sf c}_2+4\,{\sf
c}\,w^1+3\,{\sf a}\,w^2}_{W^2_0(w)}+\underbrace{2\,({\sf d}+i\,{\sf
c})}_{W^2_2(w)}\,z^2,
\\
 W^3(z,w)&:=\underbrace{{\sf c}_3+4\,{\sf
d}\,w^1+3\,{\sf a}\,w^3}_{W^3_0(w)}+\underbrace{2\,({\sf c}-i\,{\sf
d})}_{W^3_2(w)}\,z^2,
\\
 W^4(z,w)&:=\underbrace{{\sf
c}_4+3\,{\sf c}\,w^2-3\,{\sf d}\,w^3+4\,{\sf
a}\,w^4}_{W^4_0(w)}+\underbrace{2\,({\sf d}+i\,{\sf
c})}_{W^4_3(w)}\,z^3,
\\
 W^5(z,w)&:=\underbrace{{\sf c}_5+3\,{\sf
d}\,w^2+3\,{\sf c}\,w^3+4\,{\sf
a}\,w^5}_{W^5_0(w)}+\underbrace{2\,({\sf c}-i\,{\sf
d})}_{W^5_3(w)}\,z^3,
\\
 W^6(z,w)&:=\underbrace{{\sf c}_6+2\,{\sf
c}\,w^2+2\,{\sf d}\,w^3+4\,{\sf a}\,w^6}_{W^6_0(w)},
\\
W^7(z,w)&:=\underbrace{{\sf c}_7+2\,{\sf c}\,w^4+2\,{\sf
d}\,w^5+6\,{\sf c}\,w^6+5\,{\sf a}\,w^7}_{W^7_0(w)},
\\
W^8(z,w)&:=\underbrace{{\sf c}_8+4\,{\sf c}\,w^4-4\,{\sf
d}\,w^5+5\,{\sf a}\,w^8}_{W^8_0(w)}+\underbrace{2\,({\sf d}+i\,{\sf
c})}_{W^8_4(w)}\,z^4,
\endaligned
\end{equation*}
  and is generated by the eleven $\mathbb R$-linearly
independent holomorphic vector fields:
\begin{equation*}
\footnotesize \left\{ \aligned
 {\sf X}_i&:=\partial_{w_i}, \ \ \ \ \ {
i=1,\ldots,8},
\\
 {\sf X}_9&:=z\partial_{z}+2\,w^1\partial_{w^1}+3\,w^2\partial_{w^2}+3
\,w^3\partial_{w^3}+4\,w^4\partial_{w^4}+
 \\
&+
 4\,w^5\partial_{w^5}+4\,w^6\partial_{w^6}+5\,w^7\partial_{w^7}+5\,w^8
\partial_{w^8},
 \\
 {\sf
 X}_{10}&:=\partial_z+2\,i\,z\partial_{w^1}+(4\,w^1+2\,i
\,z^2)\partial_{w^2}+2\,z^2\partial_{w^3}+(3\,w^2+2\,i
\,z^3)\partial_{w^4}+
 \\
 &+
 (3\,w^3+2\,z^3)\partial_{w^5}+2\,w^2\partial_{w^6}+(2\,w^4+6
\,w^6)\partial_{w^7}+(4\,w^4+2\,i\,z^4)\partial_{w^8},
 \\
 {\sf
 X}_{11}&:=i\,\partial_{z}+2\,z\partial_{w^1}+2\,z^2\partial_{w^2}+(4
\,w^1-2\,i\,z^2)\partial_{w^3}+(-3\,w^3+2\,z^3)\partial_{w^4}+
 \\
 &+
 (3\,w^2-2\,i\,z^3)\partial_{w^5}+2\,w^3\partial_{w^6}+2\,w^5
\partial_{w^7}+(-4\,w^5+2\,z^4)\partial_{w^8}.
 \endaligned
 \right.
\end{equation*}
Furthermore, it is graded of the form:
\[
\frak{aut}_{CR}(\mathbb M^1):=\frak g_{-5}\oplus\frak
g_{-4}\oplus\frak g_{-3}\oplus\frak g_{-2}\oplus\frak
g_{-1}\oplus\frak g_0
\]
with $\frak g_{-5}:=\langle{\sf X}_7,{\sf X}_8\rangle$,with $\frak
g_{-4}:=\langle{\sf X}_4,{\sf X}_5,{\sf X}_6\rangle$ ,with $\frak
g_{-3}:=\langle{\sf X}_2,{\sf X}_3\rangle$, with $\frak
g_{-2}:=\langle{\sf X}_1\rangle$, with $\frak g_{-1}:=\langle{\sf
X}_{10},{\sf X}_{11}\rangle$, and with $\frak g_0:=\langle{\sf
X}_9\rangle$ and together with the following table of commutators:

\medskip
\begin{center}
\begin{tabular}{|c|c|c|c|c|c|c|c|c|c|c|c|}
  \hline
   & ${\sf X}_{1}$ & ${\sf X}_{2}$ & ${\sf X}_{3}$ & ${\sf X}_{4}$ &
${\sf X}_{5}$ & ${\sf X}_{6}$ & ${\sf X}_{7}$ &
    ${\sf X}_{8}$ & ${\sf X}_{9}$ & ${\sf X}_{10}$ & ${\sf X}_{11}$  \\
  \hline
  ${\sf X}_{1}$ & $0$ & $0$ & $0$ & $0$ & $0$ & $0$ & $0$ & $0$ & $2{\sf
X}_{1}$ & $4{\sf X}_{2}$ & $4{\sf X}_{3}$ \\
  ${\sf X}_{2}$ & $\ast$ & $0$ & $0$ & $0$ & $0$ & $0$ & $0$ & $0$ &
$3{\sf X}_{2}$ & $3{\sf X}_{4}$+$2{\sf X}_{6}$ & $3{\sf X}_{5}$  \\
  ${\sf X}_{3}$ & $\ast$ & $\ast$ & $0$ & $0$ & $0$ & $0$ & $0$ & $0$ &
$3{\sf X}_{3}$ & $3{\sf X}_{5}$ & $-3{\sf X}_{4}$+$2{\sf X}_{6}$ \\
  ${\sf X}_{4}$ & $\ast$ & $\ast$ & $\ast$ & $0$ & $0$ & $0$ & $0$ & $0$
& $4{\sf X}_{4}$ & $2{\sf X}_{7}$+$4{\sf X}_{8}$ & $0$ \\
  ${\sf X}_{5}$ & $\ast$ & $\ast$ & $\ast$ & $\ast$ & $0$ & $0$ & $0$ &
$0$ & $4{\sf X}_{5}$ & $0$ & $2{\sf X}_{7}$-$4{\sf X}_{8}$ \\
  ${\sf X}_{6}$ & $\ast$ & $\ast$ & $\ast$ & $\ast$ & $\ast$ & $0$ & $0$
& $0$ & $4{\sf X}_{6}$ & $6{\sf X}_{7}$ & $0$ \\
  ${\sf X}_{7}$ & $\ast$ & $\ast$ & $\ast$ & $\ast$ & $\ast$ & $\ast$ &
$0$ & $0$ & $5{\sf X}_{7}$ & $0$ & $0$ \\
  ${\sf X}_{8}$ & $\ast$ & $\ast$ & $\ast$ & $\ast$ & $\ast$ & $\ast$ &
$\ast$ & $0$ & $5{\sf X}_{8}$ & $0$ & $0$ \\
  ${\sf X}_{9}$ & $\ast$ & $\ast$ & $\ast$ & $\ast$ & $\ast$ & $\ast$ &
$\ast$ & $\ast$ & $0$ & -${\sf X}_{10}$ & -${\sf X}_{11}$ \\
  ${\sf X}_{10}$ & $\ast$ & $\ast$ & $\ast$ & $\ast$ & $\ast$ & $\ast$ &
$\ast$ & $\ast$ & $\ast$ & $0$ & $4{\sf X}_{1}$ \\
  ${\sf X}_{11}$ & $\ast$ & $\ast$ & $\ast$ & $\ast$ & $\ast$ & $\ast$ &
$\ast$ & $\ast$ & $\ast$ & $\ast$ & $0$ \\
    \hline
\end{tabular}
\end{center}
\end{Theorem}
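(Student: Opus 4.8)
The plan is to reproduce, \emph{mutatis mutandis}, the computation carried out in detail for $\mathbb M^1$ in Theorem \ref{Theorem-1}, only the weighted bookkeeping being different. First I would write down the eight complex defining equations of $\mathbb M^3$ --- the first six being the shared $\Xi_1,\dots,\Xi_6$ of \eqref{models} and the last two being $\Xi_7' = 2\,i\,(z^3\overline z^2 + z^2\overline z^3)$ and $\Xi_8'' = 2\,i\,(z^4\overline z + z\overline z^4)$ --- and form the eight tangency equations of Proposition \ref{first step} for a holomorphic field ${\sf X} = Z(z,w)\,\partial_z + \sum_{l=1}^8 W^l(z,w)\,\partial_{w_l}$. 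Expanding each coefficient in powers of $z$ as in \eqref{Taylor}, writing $Z = \sum_k z^k Z_k(w)$ and $W^l = \sum_k z^k W^l_k(w)$, and substituting the explicit $\Xi_j$'s, one obtains eight identities in $(z,\overline z)$ whose coefficients are holomorphic functions of $w$.

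The first step is the polynomiality-in-$z$ reduction, the exact analogue of Lemma \ref{simplify}: chasing the coefficients of $\overline z^{\,k}$, of $z\overline z^{\,k'}$, and so on, forces all sufficiently high Taylor coefficients $Z_k$ and $W^l_k$ to vanish, so that each of $Z, W^1,\dots,W^8$ is a polynomial in $z$. Since $\Xi_7'$ now carries weighted type $(3,2)+(2,3)$ and $\Xi_8''$ type $(4,1)+(1,4)$ --- unlike in $\mathbb M^1$, where $\Xi_7$ had type $(4,1)+(1,4)$ --- the vanishing thresholds for the families $W^7_\bullet$ and $W^8_\bullet$ change accordingly, and I expect to be left with a finite list of unknown holomorphic functions $Z_0,\dots,W^8_\bullet$ of $w$ alone, matching the degrees implicit in the closed forms displayed in the statement. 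Next, expanding every $Z_\bullet(\overline w + \Xi)$ and $W^\bullet_\bullet(\overline w + \Xi)$ by the Taylor formula \eqref{Taylor-A} and annulling the coefficient of each monomial $z^\mu\overline z^\nu$ yields, because $\mathbb M^3$ is rigid, a ${\sf LinCons}$ {\sc pde} system in the differential ring $R = \mathbb{C}(w)[Z_\bullet, W^1_\bullet,\dots,W^8_\bullet]$ equipped with the conjugation operator; by Theorem \ref{Theorem} and the discussion of Subsection \ref{mainstrategy} this system can be treated by the {\sl LRG} algorithm (Algorithm \ref{LRG}) via the {\sc Maple} package {\tt DifferentialAlgebra} together with the bar-reduction.

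Running the {\sl LRG} algorithm produces the canonical representative and hence the general solution, which I expect to be parametrized by eleven real constants ${\sf c}_1,\dots,{\sf c}_8,{\sf a},{\sf c},{\sf d}$ --- and here, in contrast with $\mathbb M^1$ and $\mathbb M^2$, the parameter ``${\sf b}$'' is \emph{killed}: the coefficient of $z$ in $Z$ is forced to be the purely real ${\sf a}$ rather than ${\sf a}+i\,{\sf b}$. From this one reads off the closed forms of $Z(z,w), W^l(z,w)$ listed in the statement, verifies directly that the corresponding parametrized holomorphic field satisfies $({\sf X}+\overline{\sf X})|_{\mathbb M^3}\equiv 0$, and collects the coefficients of the eleven real parameters to obtain ${\sf X}_1,\dots,{\sf X}_{11}$; these are $\mathbb R$-linearly independent, hence a basis, so $\dim_{\mathbb R}\frak{aut}_{CR}(\mathbb M^3) = 11$. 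It then remains to compute all pairwise commutators $[{\sf X}_a,{\sf X}_b]$ by the usual vector-field formula, which fills the displayed table, and to check $[\frak g_i,\frak g_j]\subset\frak g_{i+j}$ for the weighting that puts ${\sf X}_1$ in degree $-2$, ${\sf X}_2,{\sf X}_3$ in $-3$, ${\sf X}_4,{\sf X}_5,{\sf X}_6$ in $-4$, ${\sf X}_7,{\sf X}_8$ in $-5$, ${\sf X}_{10},{\sf X}_{11}$ in $-1$ and ${\sf X}_9$ in $0$, thereby establishing the stated Tanaka grading with the one-dimensional $\frak g_0=\langle{\sf X}_9\rangle$.

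The main obstacle is the middle step. The {\sc pde} system, once enough coefficients of $z^\mu\overline z^\nu$ have been extracted, is large and highly redundant, and it is precisely here that the asymmetric combination of $\Xi_7'$ of type $(3,2)+(2,3)$ with $\Xi_8''$ of type $(4,1)+(1,4)$ produces an extra compatibility constraint that was absent for $\mathbb M^1$ and $\mathbb M^2$; this constraint is exactly what forces ${\sf b}=0$ and removes the rotational generator, so that $\frak g_0$ drops from two to one dimension and the total dimension from $12$ to $11$. Organizing the coefficient extraction so that this constraint is not overlooked, and certifying through the {\sl LRG} canonical representative that the solution space is exactly eleven-dimensional --- with no further solutions hidden at higher order in $z$ or in $w$ --- is the delicate part; the bracket computations of the last step, although lengthy, are routine.
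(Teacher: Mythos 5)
Your proposal follows exactly the route the paper itself takes: the authors prove Theorem \ref{Theorem-3} only by remarking that one repeats, \emph{mutatis mutandis}, the detailed computation done for $\mathbb M^1$ (tangency equations, the polynomiality reduction of Lemma \ref{simplify}, coefficient extraction into a ${\sf LinCons}$ system, resolution via {\tt DifferentialAlgebra} with bar-reduction, then the bracket table), which is precisely your plan. Your additional observation that the mixed types $(3,2)+(2,3)$ and $(4,1)+(1,4)$ of $\Xi_7'$ and $\Xi_8''$ are what kill the parameter ${\sf b}$ and drop $\frak g_0$ to one dimension is consistent with the stated result and goes slightly beyond what the paper records.
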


\bigskip


\begin{thebibliography}{99}

\bibitem{AMS}
\textrm {M.Aghasi, J. Merker, M. Sabzevari}, \emph{Effective
Cartan-Tanaka connections on $\mathcal{ C}^6$-smooth strongly
pseudoconvex hypersurfaces $M^3 \subset \mathbb C^2$},
C. R. Acad. Sci. Paris, S\'er. I {\bf 349} (2011), 845--848;
{\sf arxiv:1104.1509v1}, 2011, 113 pages.

\bibitem{Baouendi}
\textrm{S. Baouendi, P. Ebenfelt, L. P. Rothschild}, {\em Real
submanifolds in complex space and their mappings}, Princeton Math.
Series,  1999.

\bibitem{becker}
\textrm{T. Becker, V. Weispfenning}, \emph{ Gr\"obner bases, a
computational approach to commutative Algebra}, Springer, 1993.

\bibitem{Beloshapka2004}
\textrm{V. K. Beloshapka}, \emph{Universal models for real
submanifolds}, Mathematical Notes, {\bf 75}(4), pages 475--488, 2004.

\bibitem{Beloshapka1997}
\textrm{V. K. Beloshapka}, \emph{CR-Varieties of the type $(1,2)$ as
varieties of super-high codimension}, Russian J. Mathematical Physics,
{\bf 5}(2), pages 399--404, 1997.

\bibitem{BES}
\textrm{V. K. Beloshapka, V. Ezhov, G. Schmalz},
\emph{Canonical Cartan connection and holomorphic
invariants on Engel CR manifolds},
Russian J. Mathematical Physics {\bf 14} (2007), no.~2, 121--133.

\bibitem{Beloshapka2011}
\textrm{V. K. Beloshapka, I. G. Kossovskiy}, \emph{Classification of
homogeneous CR-manifolds in dimension 4}, J. Mathematical Analysis
and Applications, {\bf 374}, pages 655--672, 2011.

\bibitem{Boggess}
\textrm{A. Boggess}, {\em CR-manifolds and the tangential Cauchy-Riemann
complex}., Studies in Advanced Mathematics. CRC Press,
Boca Raton, FL, 1991.

\bibitem{Boulier}
\textrm{F. Boulier, D, Lazard, F. Ollivier, M. Petitot}, {\em
Computing representations for radicals of finitely generated
differential ideals}., AAECC, {\bf 20}, pages 273--121, 2009.

\bibitem{Blop95}
\textrm{F. Boulier, D. Lazard, F. Ollivier, M. Petitot}, {\em
Representation for the radical of a finitely generated differential
ideal.,} In ISSAC'95, ACM Press, pages 158--166, 1995.

\bibitem{Chern-Moser}
\textrm{S. S. Chern, J. Moser},
\emph{Real hypersurfaces in complex manifolds}, Acta Math. {\bf 133}
(1975), 219--271 .

\bibitem{Chou}
\textrm{S. C. Chou}, {\em Mechanical Geometry theorem proving.,} D.
Reidel, Dordrecht, 1988.

\bibitem{EMS}
\textrm{V. Ezhov, B. McLaughlin, G. Schmalz},
\emph{From Cartan to Tanaka: getting real in the
complex world}, Notices of the AMS {\bf 58} (2011), no.~1, 20--27

\bibitem{Fels}
\textrm{G. Fels, W. Kaup}, \emph{Classification of Levi degenerate
homogeneous CR-manifolds in dimension 5}, Acta Math., {\bf 201}, pages
1--82, 2008.

\bibitem{Gallo}
\textrm{G. Gallo}, \emph{ Complexity issues in computational Algebra.,}
Ph.D. thesis, Courant Institute of Mathematical Sciences, New York
University, 1992.

\bibitem{GalloMishra1}
\textrm{G. Gallo, B. Mishra}, \emph{ Efficient algorithms and bounds
for Wu-Ritt characteristic sets.,} {\bf 94} Progress in
Mathematics, {\em Effective Methods in Algebraic Geometry,} pages
119--142. Birkh\"auser, Boston,
1991.

\bibitem{GalloMishra2}
\textrm{G. Gallo, B. Mishra}, \emph{Wu-Ritt characteristic sets and
their complexity.,} {\bf 6} Discrete and Computational Geometry:
Papers from the DIMACS (Discrete Mathematics and Computer Science)
Special Year, pages 111--136, 1991.

\bibitem{Merker2004}
\textrm{H. Gaussier, J. Merker}, \emph{Nonalgebraizable real analytic
tubes in $\mathbb{ C}^n$}, Math. Z. {\bf 247}(2), pages 337--383, 2004.

\bibitem{Isaev}
\textrm{A. Isaev},
\emph{Spherical tube hypersurfaces}, Lecture Notes in
Mathematics, 2011, to appear.

\bibitem{Kolchin}
\textrm{E. R. Kolchin}, \emph{Differential Algebra and algebraic
groups.,} Academic Press, New York, 1973.

\bibitem{Mamai}
\textrm{I. B. Mamai}, \emph{Model CR-manifolds with one-dimensional
complex tangent}, Russian J. Mathematical Physics, {\bf 16}(1), pages
97--102, 2009.

\bibitem{Mansfield1997}
\textrm{E. N. Mansfield, P. A. Clarkson}, \emph{Applications of the
differential algebra package {\tt diffgrob2} to classical symmetries
of differential equations}, J. Symb. Comp., {\bf 23}, pages 517--533,
1997.

\bibitem{Merker2008}
\textrm{J. Merker}, \emph{Lie symmetries and CR Geometry}, J.
Mathematical Sciences, {\bf 154}(6), pages  817--922, 2008.

\bibitem{MerkerPorten}
\textrm{J. Merker, E. Porten},
\emph{Holomorphic extension of CR functions, envelopes of holomorphy
and removable singularities}, International Mathematics Research
Surveys, Volume {\bf 2006}, Article ID 28295, 287 pages.

\bibitem{5-cubic}
\textrm{J. Merker, M. Sabzevari}, \emph{Cartan equivalence problem
for 5-dimensional CR-manifolds in $\mathbb{C}^4$}, In progress,
2012.

\bibitem{MS}
\textrm{J. Merker, M. Sabzevari}, \emph{Explicit expression of
Cartan's connections for Levi-nondegenerate 3-manifolds in complex
surfaces, and identification of the Heisenberg sphere}, Cent. Eur. J.
Math., {\bf 10}(5), pages 1801--1835, 2012.

\bibitem{Richman}
\textrm{ F. Richman}, \emph{ Constructive aspects of Noetherian rings.,}
Proceedings of Amer. Math. Soc., {\bf 44}, pages 436--441, 1974.

\bibitem{Ritt}
\textrm{J. F. Ritt}, \emph{Differential Algebra.,} American
Mathematical Society, New York, 1948.

\bibitem{Sofi}
\textrm{S. Stenstr\"om}, \emph{Differential Gr\"obner bases.,} Master
thesis, Lulea University of Technology, 2002.

\bibitem{Shananina2000}
\textrm{E. N. Shananina}, \emph{Models for CR-manifolds of type
$(1,K)$ for $3\leqslant K\leqslant 7$ and their automorphisms}, Mathematical
Notes, {\bf 67}(3), pages 382--388, 2000.

\bibitem{Tumanov}
\textrm{A. E. Tumanov}, \emph{Finite-dimensionality of the group of
CR automorphisms of a standard CR-manifold, and proper holomorphic
mappings of Siegel domains}, Math. USSR Izvestiya, {\bf 32}(3), pages
655--662, 1989.

\bibitem{Wu1}
\textrm{W-T. Wu}, \emph{ Basic principles of mechanical theorem
proving in geometries.,} Journal of Sys. Sci. and Math. Sci., {\bf
4}(3), pages  207--235, 1984. (see also Journal of Automated Reasoning,
{\bf 2}(4), pages  221--252, 1986).

\bibitem{Wu2}
\textrm{W-T. Wu}, \emph{ On the decision problem and the
mechanization of theorem proving in elementary Geometry.,} Scientia
Sinica, {\bf 21}, pages 157--179, 1978.

\bibitem{Wu3}
\textrm{W-T. Wu}, \emph{ Some recent advances in mechanical theorem
proving of geometries.,} {\bf 29} Automated Theorem Proving:
After 25 Years, Contemporary Mathematics, American Mathematical
Society, Providence, Rhode Island, pages 235--242, 1984.


\end{thebibliography}
\end{document}